\documentclass[reqno,12pt]{amsart}
\usepackage{bbm}
\usepackage[all,pdf]{xy}
\usepackage{epsfig}
\usepackage{amsmath}
\usepackage{amssymb}
\usepackage{amscd}
\usepackage{graphicx}

\allowdisplaybreaks[4]

\makeatletter
\@namedef{subjclassname@2020}{%
	\textup{2020} Mathematics Subject Classification}
\makeatother
\usepackage[colorlinks=true]{hyperref}
\usepackage{amsfonts}
\usepackage[top=25mm, bottom=27mm, left=27mm, right=27mm]{geometry}

\topmargin=0pt

\oddsidemargin=0pt

\evensidemargin=0pt

\textwidth=15cm

\textheight=22cm

\raggedbottom

\overfullrule5pt

\newtheorem{thm}{Theorem}[section]
\newtheorem{lemma}[thm]{Lemma}

\newtheorem{prop}[thm]{Proposition}

\newtheorem{cor}[thm]{Corollary}
\newtheorem{defn}[thm]{Definition}
\newtheorem{rem}[thm]{Remark}
\newtheorem{step}{Step}

\newcommand{\norm}[1]{\left\Vert #1\right\Vert}
\newcommand{\nnorm}[1]{\lvert\!|\!| #1|\!|\!\rvert}

\def \N {\mathbb N}
\def \C {\mathbb C}
\def \Z {\mathbb Z}
\def \R {\mathbb R}

\def\B {\mathcal B}

\parskip 1.0ex
\numberwithin{equation}{section}

\begin{document}
		
	\baselineskip 14pt
	
	\title[]{Multilinear Wiener-Wintner type ergodic averages and its application}

	\author[]{Rongzhong Xiao}
	
	\address{School of Mathematical Sciences, University of Science and Technology of China, Hefei, Anhui, 230026, PR China}
	\email{xiaorz@mail.ustc.edu.cn}

\subjclass[2020]{Primary: 37A30; Secondary: 37A46.}
\keywords{multilinear Wiener-Wintner type ergodic averages, nilsequence, cubic averages, polynomial ergodic averages, Furstenberg systems.}

 \begin{abstract}
 	In this paper, we extend the generalized Wiener–Wintner Theorem built by Host and Kra to the multilinear case under the hypothesis of pointwise convergence of multilinear ergodic averages. In particular, we have the following result:
 	
 	Let $(X,\B,\mu,T)$ be a measure preserving system. Let $a$ and $b$ be two distinct non-zero integers. Then for any $f_{1},f_{2}\in L^{\infty}(\mu)$, there exists a full measure subset $X(f_{1},f_{2})$ of $X$ such that for any $x\in X(f_{1},f_{2})$, and any nilsequence $\textbf{b}=\{b_n\}_{n\in \Z}$, $$ \lim_{N\rightarrow \infty}\frac{1}{N}\sum_{n=0}^{N-1}b_{n}f_{1}(T^{an}x)f_{2}(T^{bn}x)$$ exists.
	\end{abstract}
		\maketitle
		
	\section{Introduction}
		Throughout the paper, by a \emph{measure preserving system} or a \emph{system}, we mean a Lebesgue space $(X,\B,\mu)$ with an invertible measure preserving transformation $T:X\rightarrow X$. We write a system as $(X,\B,\mu,T)$. A system $(X,\B,\mu,T)$ is ergodic if the only $T$-invariant subsets in $\B$ have measure $0$ or $1$.
		
		In 1941, Wiener and Wintner strengthened the classical Birkhoff's pointwise ergodic theorem.
		\begin{thm}
			$($\cite{WW}$)$For a system $(X,\B,\mu,T)$, and any $f\in L^{\infty}(\mu)$, there exists a full measure subset $X(f)$ of $X$ such that for any $x\in X(f)$, and any $t\in \R$, the limit $$\lim_{N\rightarrow \infty}\frac{1}{N}\sum_{n=0}^{N-1}e^{2\pi int}f(T^{n}x)$$ exists.
		\end{thm}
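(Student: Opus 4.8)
The plan is to prove the statement (the classical Wiener–Wintner theorem) by separating $f$ into a part supported on the Kronecker factor, where eigenfunctions make the weighted averages essentially explicit, and a part orthogonal to it, where a van der Corput estimate forces the averages to vanish uniformly in $t$. First I would reduce to the ergodic case by the ergodic decomposition: the conclusion is a statement about a full-measure set of $x$, and the conditional measures $\mu_x$ are ergodic with $\|f\|_{L^\infty(\mu_x)}\le\|f\|_{L^\infty(\mu)}$, so it suffices to treat ergodic $(X,\B,\mu,T)$. Writing $W_N(x,t)=\frac1N\sum_{n=0}^{N-1}e^{2\pi int}f(T^nx)$, let $\mathcal K\subset\B$ denote the Kronecker factor (the invariant sub-$\sigma$-algebra generated by the eigenfunctions), and split $f=f_1+f_2$ with $f_1=\E(f\mid\mathcal K)$ and $f_2=f-f_1\perp\mathcal K$; both are bounded since conditional expectation is an $L^\infty$-contraction. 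I will produce a full-measure set on which $W_N^{f_2}(x,t)\to0$ for every $t$, and a full-measure set on which $W_N^{f_1}(x,t)$ converges for every $t$; their intersection is the desired $X(f)$.

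For the orthogonal part $f_2$, which I expect to be the main obstacle, I would apply the van der Corput inequality to $u_n=e^{2\pi int}f_2(T^nx)$. Since $\overline{u_n}\,u_{n+h}=e^{2\pi iht}\,\overline{f_2(T^nx)}\,f_2(T^{n+h}x)$, the correlation sums become $e^{2\pi iht}\cdot\frac1N\sum_{n}(\overline{f_2}\cdot f_2\circ T^h)(T^nx)$, which by Birkhoff's theorem converge a.e.\ to $\int\overline{f_2}\,(f_2\circ T^h)\,d\mu=\widehat{\sigma_{f_2}}(h)$, the $h$-th Fourier coefficient of the spectral measure $\sigma_{f_2}$. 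Taking $N\to\infty$ first and bounding $|e^{2\pi iht}|=1$ uniformly in $t$, van der Corput yields, for each fixed $H$ and a.e.\ $x$,
\[
\limsup_{N\to\infty}\sup_{t}\bigl|W_N^{f_2}(x,t)\bigr|^2\ \le\ \frac{\|f_2\|_\infty^2}{H+1}+\frac{2}{H+1}\sum_{h=1}^{H}\bigl|\widehat{\sigma_{f_2}}(h)\bigr|.
\]
Because $f_2\perp\mathcal K$, the spectral measure $\sigma_{f_2}$ is non-atomic, so Wiener's lemma gives $\frac1{H+1}\sum_{h=1}^{H}|\widehat{\sigma_{f_2}}(h)|^2\to0$, and Cauchy–Schwarz turns this into $\frac1{H+1}\sum_{h=1}^{H}|\widehat{\sigma_{f_2}}(h)|\to0$. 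Letting $H\to\infty$ (the exceptional null set being a countable union over $h$ and $H$) shows $\sup_t|W_N^{f_2}(x,t)|\to0$ a.e.

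For the Kronecker part $f_1\in\mathcal K$, each eigenfunction $\phi$ with $\phi\circ T=e^{2\pi i\theta}\phi$ satisfies $\phi(T^nx)=e^{2\pi in\theta}\phi(x)$, so $W_N^{\phi}(x,t)=\phi(x)\cdot\frac1N\sum_{n<N}e^{2\pi in(t+\theta)}$ converges for every $x$ and every $t$ (to $\phi(x)$ or $0$ according as $t+\theta\in\Z$ or not); the same holds for any finite linear combination. For general $f_1$ I would approximate in $L^2$ by finite combinations $g_k$ of eigenfunctions with $\|f_1-g_k\|_2\le 4^{-k}$. The maximal function $W^*h(x)=\sup_{N,t}|W_N^{h}(x,t)|$ is dominated by the Birkhoff maximal function $M|h|$ (the weight has modulus one), so $\|W^*(f_1-g_k)\|_2\lesssim\|f_1-g_k\|_2$, and Chebyshev plus Borel–Cantelli give $W^*(f_1-g_k)(x)\to0$ a.e. Since $W_N^{g_k}(x,t)$ is Cauchy in $N$ for every $t$, the triangle inequality $|W_N^{f_1}-W_M^{f_1}|\le|W_N^{g_k}-W_M^{g_k}|+2W^*(f_1-g_k)(x)$ shows $W_N^{f_1}(x,t)$ is Cauchy for every $t$ on a full-measure set.

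Finally I would intersect the two full-measure sets: on it $W_N^{f_2}(x,t)\to0$ and $W_N^{f_1}(x,t)$ converges, for every $t$, hence $W_N^{f}(x,t)=W_N^{f_1}(x,t)+W_N^{f_2}(x,t)$ converges for every $t$, which is exactly the claim. The genuinely delicate points are the van der Corput/Wiener step that kills the $\mathcal K^\perp$ part uniformly in $t$, and the bookkeeping that keeps a single null set valid for all (uncountably many) values of $t$ simultaneously; the maximal-inequality approximation for the Kronecker part and the ergodic decomposition are comparatively routine.
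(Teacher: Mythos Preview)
The paper does not supply a proof of this statement: it is the classical Wiener--Wintner theorem, quoted in the introduction with a citation to \cite{WW} and used only as historical motivation. There is therefore no ``paper's own proof'' to compare your proposal against.

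That said, your argument is correct and is essentially the standard modern proof (due to Bourgain). The decomposition $f=\E(f\mid\mathcal K)+(f-\E(f\mid\mathcal K))$ along the Kronecker factor, the van der Corput step that replaces the weighted average by autocorrelations $\widehat{\sigma_{f_2}}(h)$ independent of $t$, and the appeal to Wiener's lemma (continuity of $\sigma_{f_2}$ when $f_2\perp\mathcal K$) together with Cauchy--Schwarz to pass from $\frac1H\sum|\widehat{\sigma_{f_2}}(h)|^2\to0$ to $\frac1H\sum|\widehat{\sigma_{f_2}}(h)|\to0$ are all sound. The treatment of the Kronecker part via eigenfunction approximation and the maximal ergodic inequality is likewise standard and correct; your remark that $W^*h\le M|h|$ is the key reason a single null set works for all $t$. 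The reduction to the ergodic case by ergodic decomposition is routine. No gaps.
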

		
	   The core of the Wiener–Wintner Theorem is that $X(f)$ is independent of the choice of $t$. Since $\R$ is uncountable, the result leaded reseachers to focus on general phenomena in ergodic theory where samplings are good for an uncountable number of systems. And for the convergent behavior of some polynomial ergodic averages, the Wiener–Wintner Theorem can also help us to understand it.
		
		In 2009, Host and Kra built the following result, which is called generalized Wiener–Wintner Theorem.
		\begin{thm}\label{thm3}
			$($\cite[Theorem 2.22]{HKU}$)$For a system $(X,\B,\mu,T)$, and any $f\in L^{\infty}(\mu)$, there exists a full measure subset $X(f)$ of $X$ such that for any $x\in X(f)$, and any nilsequence $($for definition, see Subsection \ref{subsec}.$)$ $\textbf{b}=\{b_n\}_{n\in \Z}$, the limit $$\lim_{N\rightarrow \infty}\frac{1}{N}\sum_{n=0}^{N-1}b_{n}f(T^{n}x)$$ exists.
		\end{thm}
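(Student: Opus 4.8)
The plan is to prove the statement through three reductions followed by a structure/uniformity dichotomy, following the Host--Kra strategy. First, by the ergodic decomposition it suffices to treat the case where $(X,\B,\mu,T)$ is ergodic, since a full-measure statement for a.e.\ ergodic component reassembles into one for $\mu$. Second, every nilsequence is a uniform limit of basic nilsequences, and the Ces\`aro operator is a contraction, so a uniform-in-$N$ estimate shows that if $\lim_N\frac1N\sum_{n<N}b^{(j)}_n f(T^nx)$ exists for each basic approximant $\mathbf b^{(j)}$, then the limit for $\mathbf b$ exists as well. Hence it is enough to produce, for each $k\ge1$, a full-measure set $X_k(f)$ on which the limit exists for every basic $k$-step nilsequence, and then set $X(f)=\bigcap_{k\ge1}X_k(f)$, still of full measure. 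Third, fix $k$, let $Z_k$ be the $k$-th Host--Kra factor, and write $f=g+h$ with $g=\E(f\mid Z_k)$ and $h=f-g$, so that $\nnorm{h}_{k+1}=0$; the two summands are treated by entirely different mechanisms.

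For the structured part $g$, I would use that $Z_k$ is an inverse limit of $k$-step nilsystems (the Host--Kra structure theorem). Approximating $g$ in $L^1(\mu)$ by functions $g_i=\Phi_i\circ\pi_i$, where $\pi_i\colon X\to G_i/\Gamma_i$ is a factor map onto a $k$-step nilsystem with transformation $S_i$ and $\Phi_i$ is continuous, the sequence $n\mapsto g_i(T^nx)=\Phi_i(S_i^{\,n}\pi_i(x))$ is a genuine basic $k$-step nilsequence for every $x$. Consequently $n\mapsto b_n\,g_i(T^nx)$ is a product of two $k$-step nilsequences, hence a nilsequence on the product nilsystem, and its Ces\`aro averages converge by the equidistribution theory of orbits on nilmanifolds (Leibman). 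The $L^1$-approximation error is controlled by Birkhoff's theorem, since $\frac1N\sum_{n<N}\lvert g-g_i\rvert(T^nx)\to\norm{g-g_i}_{L^1}$ for a.e.\ $x$ with an exceptional set independent of $\mathbf b$; letting $i$ range over a countable sequence with $\norm{g-g_i}_{L^1}\to0$ shows that $\frac1N\sum_{n<N}b_n\,g(T^nx)$ is Cauchy as $N\to\infty$, uniformly over $k$-step nilsequences of sup-norm at most $1$, off a single null set.

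The heart of the argument is the uniform part: showing that for a.e.\ $x$ one has $\frac1N\sum_{n<N}b_n\,h(T^nx)\to0$ for every $k$-step nilsequence $\mathbf b$, with a single exceptional null set. I would derive this from two facts. First, a transfer lemma: for a.e.\ $x$ the orbit sequence $\mathbf a^x=(h(T^nx))_{n}$ satisfies $\|\mathbf a^x\|_{U^{k+1}}=0$, where $\|\cdot\|_{U^{k+1}}$ is the Host--Kra uniformity seminorm on $\ell^\infty(\Z)$; this is proved by expanding the $2^{k+1}$ averages defining $\|\mathbf a^x\|_{U^{k+1}}$ and evaluating them along the orbit via iterated use of the pointwise ergodic theorem, so that they converge to the space averages defining $\nnorm{h}_{k+1}^{\,2^{k+1}}=0$. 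Second, the Host--Kra duality: a bounded sequence with vanishing $(k+1)$-uniformity seminorm is asymptotically orthogonal to every $k$-step nilsequence, i.e.\ $\frac1N\sum_{n<N}a_n\overline{b_n}\to0$. Combining these, for a.e.\ $x$ --- the null set coming only from the transfer lemma, hence independent of $\mathbf b$ --- the averages against every $k$-step nilsequence tend to $0$.

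The main obstacle is precisely this uniform orthogonality: producing a single null set that works simultaneously for the uncountable family of all $k$-step nilsequences, rather than one null set per nilsequence. Everything hinges on the seminorm characterization in the second fact and on the pointwise transfer in the first; the former is established by induction on $k$, in which a van der Corput inequality reduces correlation with a $k$-step nilsequence to correlation of the shifted products $a_n\overline{a_{n+m}}$ with $(k-1)$-step nilsequences, matching the recursive definitions of both the uniformity seminorms and of nilsequences, with the base case $k=1$ recovering the classical Wiener--Wintner theorem. Granting these ingredients, intersecting the sets $X_k(f)$ over all $k$ and combining the structured and uniform parts yields the desired full-measure set $X(f)$.
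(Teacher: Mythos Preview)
The paper does not prove this statement; it is cited from \cite[Theorem~2.22]{HKU} as background. Your outline is essentially the original Host--Kra argument and is correct. If one compares instead with the $d=1$ specialization of the paper's proof of Theorem~\ref{A} (whose hypothesis there reduces to Birkhoff's theorem), the overall architecture matches---split $f$ along a Host--Kra factor, show the orthogonal part has vanishing local seminorm along a.e.\ orbit and invoke Proposition~\ref{prop4}, and handle the structured part by approximation with functions whose orbit sequences are nilsequences---but your execution is lighter in two places. For the uniform piece you transfer $\nnorm{h}_{k+1}=0$ to $\nnorm{\mathbf a^x}_{k+1}=0$ directly via Birkhoff plus ergodicity, whereas the paper routes the corresponding estimate through the Furstenberg self-joining and the cubic-average bounds of Lemmas~\ref{lem2} and~\ref{lem3}; for the structured piece you use a straightforward $L^1$/Birkhoff Cauchy argument, whereas the paper controls the local seminorm of the residual and appeals to Proposition~\ref{prop5}. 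For $d=1$ your route is the natural one; the paper's heavier machinery is precisely what is needed to make the scheme survive the multilinear case, where no direct orbit-wise transfer of the vanishing seminorm is available.
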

	   Note that for any $t\in\R$, $\{e^{2\pi int}\}_{n\in \Z}$ is a nilsequence since $\R/\Z$ is a $1$-step nilmanifold. So the above statements generalize Wiener-Wintner theorem. In addition, Host and Kra pointed out the following result in \cite{HK-book}.
	   \begin{thm}\label{P1}
	   	$($\cite[Theorem 23.5]{HK-book}$)$Given $d\in \N$, let $a_1,\cdots,a_d$ be distinct non-zero integers. Let $(X,\B,\mu,T)$ be a measure preserving system. Then for any $f_{1},\cdots,f_{d}\in L^{\infty}(\mu)$ and any nilsequence $\textbf{b}=\{b_n\}_{n\in \Z}$, $$ \lim_{N\rightarrow \infty}\frac{1}{N}\sum_{n=0}^{N-1}b_{n}\prod_{j=1}^{d}f_{j}(T^{a_{j}n}x)$$ exists in $L^{2}(\mu)$. 
	   \end{thm}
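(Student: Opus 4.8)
The plan is to run the Host--Kra programme: isolate a finite-step characteristic factor for the weighted average, reduce $X$ to a nilsystem, and then apply equidistribution of polynomial orbits on nilmanifolds. By the ergodic decomposition together with the uniform bound on $\norm{A_N}_{L^\infty}$ (so that the averages over the ergodic components patch together by dominated convergence) one may assume $T$ ergodic; since every nilsequence is a uniform limit of \emph{basic} ones and uniform approximation of the weight is compatible with both Ces\`aro averaging and $L^2(\mu)$-limits, one may assume $b_n=F(S^n y_0)$ for an ergodic nilsystem $(Y,S)$ with Haar measure, a point $y_0\in Y$ and $F\in C(Y)$; finally rescale so that $\norm{f_j}_{L^\infty}\le1$ for all $j$ and $\norm{F}_{L^\infty}\le1$. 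Writing $\mathcal Z_s=\mathcal Z_s(X)$ for the Host--Kra factors and $A_N=A_N[f_1,\dots,f_d]$ for the average in the statement, the proof splits into two parts: (A) a \emph{characteristic factor} statement: there is $s=s(d)$, depending only on $d$, such that $\norm{A_N[f_1,\dots,f_d]}_{L^2(\mu)}\to0$ whenever $\E[f_{j_0}\mid\mathcal Z_s]=0$ for some $j_0$; and (B) the \emph{nilcase}: if every $f_j$ is $\mathcal Z_s$-measurable then $A_N$ converges in $L^2(\mu)$. Granting (A), multilinearity and decomposing one coordinate at a time into its $\mathcal Z_s$-projection plus the orthogonal complement reduce the general statement to (B); (A) also shows the limit is $\mathcal Z_s$-measurable, a fact used inside the induction below.

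Part (A) is proved by induction on $d$ via van der Corput's inequality. The inductive mechanism is that a single van der Corput step in $n$, followed by the substitution $x\mapsto T^{-a_{j_0}n}x$ (with $j_0$ the index at which orthogonality is assumed), turns $\norm{A_N[f_1,\dots,f_d]}_{L^2}^2$, after taking $\limsup_N$, into an average over $h$ of quantities of the form $\langle B^h_\infty,\overline{g_{j_0,h}}\rangle$, where $g_{j,h}=(f_j\circ T^{a_j h})\overline{f_j}$, and $B^h_N$ is the $(d-1)$-fold average over the $d-1$ distinct non-zero integers $a_j-a_{j_0}$ ($j\ne j_0$) of the functions $g_{j,h}$, weighted by the sequence $n\mapsto b_{n+h}\overline{b_n}=\big((F\circ S^h)\overline F\big)(S^n y_0)$, which is again a basic nilsequence. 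Thus the induction closes in form. The base case $d=1$, i.e. $\frac1N\sum_{n}b_n f(T^{an}x)$, is handled by a further van der Corput step: it reduces to $\limsup_H\frac1H\sum_{h<H}|c_h\,\beta_h|$ with $c_h=\langle f\circ T^{ah},f\rangle$ and $\beta_h=\lim_N\frac1N\sum_{n}b_{n+h}\overline{b_n}$, a limit which exists and is bounded by $1$ because $n\mapsto b_{n+h}\overline{b_n}$ is a basic nilsequence and $S$ is uniquely ergodic on the sub-nilmanifold $\overline{\{S^n y_0:n\in\Z\}}$; since $\E[f\mid\mathcal Z_1]=0$ forces $\frac1H\sum_{h<H}|c_h|^2\to0$ (the spectral measure of $f$ is non-atomic), the base case follows with $s(1)=1$. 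The inductive step is completed by invoking the $(d-1)$-fold case on $B^h_N$ and controlling the resulting $h$-averages of $\E[\,\cdot\mid\mathcal Z_{d-1}]$-projections of the $g_{j_0,h}$ by the Host--Kra seminorm $\nnorm{f_{j_0}}_{d+1}$, which vanishes exactly when $\E[f_{j_0}\mid\mathcal Z_d]=0$; tracking the indices through the induction gives $s(d)=d$.

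For part (B), replace $X$ by the factor $\mathcal Z_s$, an inverse limit of ergodic $s$-step nilsystems; approximating $\mathcal Z_s$-measurable functions, and $L^2$-limits thereof, it suffices to treat $X=G/\Gamma$ a genuine nilmanifold with Haar measure, $Tx=\tau\cdot x$ a nil-translation, and each $f_j\in C(G/\Gamma)$. Then $n\mapsto(\tau^{a_1 n}x,\dots,\tau^{a_d n}x,S^n y_0)$ is a linear, hence polynomial, orbit in the nilmanifold $(G/\Gamma)^d\times Y$, so by Leibman's equidistribution theorem it equidistributes in a sub-nilmanifold $Z=Z(x,y_0)$; consequently $A_N(x)=\frac1N\sum_{n}(f_1\otimes\cdots\otimes f_d\otimes F)$ evaluated along this orbit converges to $\int_Z (f_1\otimes\cdots\otimes f_d\otimes F)\,dm_Z$. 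As the integrand is continuous, this convergence is uniform in the base point $x$, so in particular it holds in $L^2(\mu)$. Combining (A) and (B) gives the theorem.

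The main obstacle is the inductive step of part (A): one must carry through the van der Corput reductions an inductive statement in exactly the usable form --- convergence of the $(d-1)$-fold weighted average together with a description of its limit strong enough to be paired against $g_{j_0,h}$ --- and then match the resulting $h$-averaged quantities with the correct Host--Kra seminorm of $f_{j_0}$; this bookkeeping, including the verification that the nilsequence weight contributes only bounded, convergent scalar factors at each stage and so does not inflate the characteristic factor beyond $\mathcal Z_d$, is the technical heart of the argument. The remaining ingredients --- reduction to ergodic $T$ by the ergodic decomposition, passage from the inverse-limit factor $\mathcal Z_s$ to an honest nilmanifold, and uniform-in-base equidistribution of polynomial sequences on nilmanifolds --- are standard but must be stated with care.
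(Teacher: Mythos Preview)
The paper does not give its own proof of this statement: Theorem~\ref{P1} is quoted verbatim from Host and Kra's book \cite[Theorem~23.5]{HK-book} and is used as a black box, so there is no ``paper's proof'' to compare against. Your sketch is in fact the standard Host--Kra route to that result --- characteristic factor via van der Corput, reduction to a nilsystem through the structure theorem for $\mathcal Z_s$, and then Leibman's equidistribution of polynomial orbits --- and is essentially what one finds in \cite{HK-book}.

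One point in your Part~(A) deserves care. You claim $s(d)=d$ and, in the inductive step, want to pass from $\mathcal Z_{d-1}$-measurability of $B^h_\infty$ to control of $\frac1H\sum_h\bigl|\langle B^h_\infty,\overline{g_{j_0,h}}\rangle\bigr|$ by $\nnorm{f_{j_0}}_{d+1}$ through the projections $\E[g_{j_0,h}\mid\mathcal Z_{d-1}]$. But there is no general inequality of the form $\norm{\E[g\mid\mathcal Z_{d-1}]}_{2}\lesssim\nnorm{g}_{d}$ (already for $d=2$ on a Kronecker system one has $\norm{\E[g\mid\mathcal Z_1]}_2^2=\sum_\lambda|\hat g(\lambda)|^2$ versus $\nnorm{g}_2^4=\sum_\lambda|\hat g(\lambda)|^4$), so the step ``$\frac1H\sum_h\nnorm{g_{j_0,h}}_d^{2^d}\to0$ forces $\frac1H\sum_h\norm{\E[g_{j_0,h}\mid\mathcal Z_{d-1}]}_2\to0$'' is not immediate. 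This is not fatal: one can instead apply a Cauchy--Schwarz/product-system trick after van der Corput to bound the weighted average directly by $\min_j\nnorm{f_j}_{d+2}$ (so $s(d)=d+1$ works, and any finite $s$ suffices for the theorem), or allow $s$ to depend on the step of the nilsequence as the present paper does in its pointwise Theorem~\ref{A}, where $\mathcal Z_{d+k}$ appears. The remainder of your outline --- the base case, the reduction to ergodic $T$, and Part~(B) via Leibman --- is sound.
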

	   The averages which are mentioned in the above theorem are called multilinear Wiener-Wintner type ergodic averages. The above theorem guarantees the norm convergence of multilinear Wiener-Wintner type ergodic averages. For extra results related to the above theorem, one can see  \cite[Theorem B.4]{HSY}.
	   
	   Based on Theorem \ref{thm3} and Theorem \ref{P1}, there is a natural question: can we generalize Theorem \ref{thm3} to multilinear version under the hypothesis of pointwise convergence of multilinear ergodic averages?
	   
	   For the bilinear case, one can find some results in \cite{AM,ADM}. For the general case, we obtain the following result.
	\begin{thm}\label{A}
		Given $d\in \N$, let $a_1,\cdots,a_d$ be distinct non-zero integers. Let $(X,\B,\mu,T)$ be a measure preserving system. Then for any $h_{1},\cdots,h_{d}\in L^{\infty}(\mu)$, $$\lim_{N\rightarrow \infty}\frac{1}{N}\sum_{n=0}^{N-1}\prod_{j=1}^{d}h_{j}(T^{a_{j}n}x)$$ exists almost everywhere if and only if for any $f_{1},\cdots,f_{d}\in L^{\infty}(\mu)$, there exists full measure subset $X(f_{1},\cdots,f_{d})$ of $X$ such that for any $x\in X(f_{1},\cdots,f_{d})$, and any nilsequence $\textbf{b}=\{b_n\}_{n\in \Z}$, $$ \lim_{N\rightarrow \infty}\frac{1}{N}\sum_{n=0}^{N-1}b_{n}\prod_{j=1}^{d}f_{j}(T^{a_{j}n}x)$$ exists. 
	\end{thm}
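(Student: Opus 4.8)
The plan is as follows. The implication from the Wiener--Wintner type statement to the a.e.\ convergence of the unweighted multilinear averages is immediate: the constant sequence $b_n\equiv 1$ is a nilsequence (a $0$-step one), so specializing $f_j=h_j$ and $\mathbf b=(1)_{n\in\Z}$ in the second statement yields the first, with the same full measure set. All the content is in the converse, which I would establish through the usual Host--Kra dichotomy between a structured part and a uniform part; the novelty is that the hypothesis of a.e.\ convergence of the unweighted averages is exactly what plays, for the uniform part, the role that Birkhoff's theorem plays in the single-function generalized Wiener--Wintner theorem (Theorem \ref{thm3}).

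I would first make some reductions. A nilsequence is a uniform limit of basic nilsequences of the same order, and every average occurring here is bounded by $\prod_j\|h_j\|_\infty$, so an approximation argument reduces the problem to basic nilsequences $b_n=F(g^n\Gamma)$ with $F$ continuous, $\|F\|_\infty\le1$, on an $\ell$-step nilmanifold $G/\Gamma$; taking a countable intersection over $\ell\in\N$ of the resulting full measure sets, it is enough, for each fixed $\ell$, to handle all basic order-$\le\ell$, norm-$\le1$ nilsequences at once. (Incidentally, setting some $h_j\equiv1$ shows that the hypothesis passes to every sub-tuple of the exponents, which is convenient.) Next, from the hypothesis I would extract, by a transference argument in the spirit of the maximal principle of Sawyer and Stein adapted to multilinear operators, a maximal inequality for $\vec h\mapsto \sup_N\big|\frac1N\sum_{n=0}^{N-1}\prod_j h_j(T^{a_jn}x)\big|$; combined with the maximal ergodic theorem for each $T^{a_j}$, this allows the $f_j$ to be replaced by $L^2$-approximations that are continuous functions on finite-dimensional nilfactors, up to errors negligible a.e., uniformly in $N$ and in $\mathbf b$ (the normalization $\|F\|_\infty\le1$ being what prevents the nilsequence weight from interfering in such estimates).

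Now fix $\ell$, fix a Host--Kra nilfactor $Z=Z_K$ of order $K=K(d,\ell)$ large enough to be characteristic --- in $L^2$ and uniformly over order-$\le\ell$, norm-$\le1$ nilsequences --- for the $d$-fold weighted averages, as in the characteristic-factor analysis behind Theorem \ref{P1}, and split each $f_j=g_j+u_j$ with $g_j=\E(f_j\mid Z)$ (which, by the previous step, may be taken continuous on a genuine nilmanifold) and $\|u_j\|_{U^{K+1}}=0$. Expanding $\prod_j f_j(T^{a_jn}x)$ over the $2^d$ choices of $g_j$ versus $u_j$, the all-$g_j$ term contributes $\frac1N\sum_{n=0}^{N-1}b_n\prod_j g_j(T^{a_jn}x)$, and for a.e.\ $x$ the sequence $n\mapsto b_n\prod_j g_j(T^{a_jn}x)$ is itself a nilsequence --- a product of $\mathbf b$ with dilations of nilsequences arising from $Z$ --- so this average converges by Leibman's theorem on averages along polynomial orbits in nilmanifolds. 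Each of the remaining $2^d-1$ terms carries at least one uniform factor, so what is left is the following key estimate: whenever $\varphi_1,\dots,\varphi_d\in L^\infty(\mu)$ with $\|\varphi_i\|_{U^{K+1}}=0$ for some $i$, then for a.e.\ $x$ one has $\sup_{\mathbf b}\limsup_N\big|\frac1N\sum_{n=0}^{N-1}b_n\prod_j\varphi_j(T^{a_jn}x)\big|=0$, the supremum over basic order-$\le\ell$, norm-$\le1$ nilsequences. I would prove this by the van der Corput induction of Host and Kra, carried out pointwise: a vertical-character decomposition of $F$ followed by van der Corput in $n$ replaces $\mathbf b$ by derived nilsequences of strictly smaller complexity and replaces $\varphi_j$ by $(\varphi_j\circ T^{a_jm})\overline{\varphi_j}$; by the Gowers--Host--Kra seminorm recursion the factor in the distinguished slot remains, for $m$ in a density-one set, uniform of high order, so the inner averages fall within the induction, and the multilinear maximal inequality above --- together with a Cesàro/subsequence squeezing argument for averages of nonnegative terms whose $L^1$-norms tend to zero --- propagates the bounds through the induction without ever introducing an exceptional set depending on the particular nilsequence. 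The base of the induction is reached when the nilsequence has collapsed to a constant, and there $\frac1N\sum_{n=0}^{N-1}\prod_j\varphi_j(T^{a_jn}x)\to 0$ a.e.\ is precisely the hypothesis combined with the vanishing of the $L^2$-limit already supplied by the characteristic-factor analysis. Summing the $2^d$ contributions, $\frac1N\sum_{n=0}^{N-1}b_n\prod_j f_j(T^{a_jn}x)$ converges a.e.\ on a set depending only on $f_1,\dots,f_d$ and $\ell$, as required.

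The main obstacle I anticipate is carrying out the Host--Kra induction pointwise rather than merely in $L^2$, where it is classical. To do so one must control, uniformly over the uncountable family of order-$\le\ell$ nilsequences, the maximal functions appearing at every stage of the van der Corput recursion, and the only input about the pointwise behaviour of multilinear averages is the a.e.-convergence hypothesis itself; so the crux is to distil from that hypothesis a genuinely usable multilinear maximal inequality and to thread it --- alongside the seminorm estimates and the vertical-character decomposition --- through the induction while keeping every exceptional null set free of the nilsequence. Checking that each van der Corput step truly degrades the complexity of the nilsequence (so the induction terminates at the hypothesis) and getting the bookkeeping of the null sets exactly right are where I expect the real difficulty to lie.
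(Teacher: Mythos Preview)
Your overall architecture---Host--Kra splitting into a structured piece and a uniform piece---matches the paper, but the execution of the uniform step diverges and contains a real gap. The problematic move is: ``from the hypothesis I would extract, by a transference argument in the spirit of the maximal principle of Sawyer and Stein adapted to multilinear operators, a maximal inequality.'' The Stein--Sawyer principle is a statement about sequences of \emph{linear} (or sublinear, positive) operators commuting with a measure-preserving action; no analogue is available for genuinely multilinear averages, and you cannot recover a multilinear maximal inequality merely from a.e.\ convergence. Without such an inequality your pointwise van der Corput induction cannot be threaded through while keeping the null sets independent of the nilsequence---precisely the obstacle you flag at the end.

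The paper sidesteps this entirely by using the \emph{local seminorms} $\nnorm{\cdot}_k$ of Host--Kra on the sequence $a_n(x)=\prod_j f_j(T^{a_j n}x)$. The a.e.\ convergence hypothesis is used \emph{only} to guarantee that this sequence satisfies property $\mathcal P(k)$ (all the cubic correlations $c_{\underline h}(x)$ exist), so that $\nnorm{\mathbf a(x)}_k$ is well-defined for a.e.\ $x$. The uniform-part claim is then reduced to proving the \emph{integrated} bound
\[
\int_X \nnorm{\mathbf a(x)}_k^{2^k}\,d\mu(x)\;\lesssim_{a_1,\dots,a_d}\;\min_{1\le i\le d}\nnorm{f_i}_{d+k+1},
\]
which is obtained by two concrete estimates (an Assani-type inequality for cubic averages, Lemma~\ref{lem2}, and a uniform Wiener--Wintner bound over the Furstenberg self-joining $\mu_d^{\mathcal A}$, Lemma~\ref{lem3}). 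Since the integrand is nonnegative, this forces $\nnorm{\mathbf a(x)}_k=0$ a.e., and the uniformity over \emph{all} $(k-1)$-step nilsequences then drops out of Host--Kra's inequality (Proposition~\ref{prop4}) with no induction and no maximal function. The structured piece is likewise handled inside the local-seminorm framework (Proposition~\ref{prop5}) rather than by a direct appeal to Leibman. So the key idea you are missing is that the uncountable uniformity over nilsequences is already encoded in a single scalar $\nnorm{\mathbf a(x)}_k$, and one only needs an $L^1$ bound on it---not a pointwise maximal inequality.
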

	
	In 2015, Zorin-Kranich \cite[Corollary 1.4]{ZK} had pointed out the above theorem for ergodic systems and provided an inductive proof via harmonic analsis's method. In this paper, the author will give a purely ergodic theoretic proof for the above theorem based on Host-Kra factor and determine related characteristic factor.
	
	Based on the above theorem, \cite[Theorem]{B}, \cite[Theorem C]{HSY19} and \cite[Theorem 2.24]{HKU}, we have the following two corollaries.
	\begin{cor}\label{cor1}
		Let $(X,\B,\mu,T)$ be a measure preserving system. Let $a$ and $b$ be two distinct non-zero integers. Then for any $f_{1},f_{2}\in L^{\infty}(\mu)$, there exists full measure subset $X(f_{1},f_{2})$ of $X$ such that for any $x\in X(f_{1},f_{2})$, and any nilsequence $\textbf{b}=\{b_n\}_{n\in \Z}$, $$ \lim_{N\rightarrow \infty}\frac{1}{N}\sum_{n=0}^{N-1}b_{n}f_{1}(T^{an}x)f_{2}(T^{bn}x)$$ exists.
	\end{cor}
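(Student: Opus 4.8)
The plan is to obtain Corollary \ref{cor1} as an immediate consequence of Theorem \ref{A}. Specialize Theorem \ref{A} to the case $d=2$, $a_{1}=a$, $a_{2}=b$, which are two distinct non-zero integers as required. The ``if'' direction of the equivalence there then reads: if for every $h_{1},h_{2}\in L^{\infty}(\mu)$ the averages $\frac{1}{N}\sum_{n=0}^{N-1}h_{1}(T^{an}x)h_{2}(T^{bn}x)$ converge for $\mu$-a.e.\ $x$, then for every $f_{1},f_{2}\in L^{\infty}(\mu)$ there is a full measure set $X(f_{1},f_{2})\subseteq X$ such that for every $x\in X(f_{1},f_{2})$ and every nilsequence $\textbf{b}=\{b_{n}\}_{n\in\Z}$ the averages $\frac{1}{N}\sum_{n=0}^{N-1}b_{n}f_{1}(T^{an}x)f_{2}(T^{bn}x)$ converge. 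Hence it suffices to verify the hypothesis, namely the almost everywhere convergence of the unweighted bilinear ergodic averages along $T^{an}$ and $T^{bn}$ for every pair of bounded functions.

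This hypothesis is precisely Bourgain's double recurrence theorem \cite{B}: for any measure preserving system, any distinct non-zero integers $a,b$, and any $h_{1},h_{2}\in L^{\infty}(\mu)$, the averages $\frac{1}{N}\sum_{n=0}^{N-1}h_{1}(T^{an}x)h_{2}(T^{bn}x)$ converge for $\mu$-a.e.\ $x$. The only point needing a comment is that \cite{B} is typically stated for ergodic systems, whereas Theorem \ref{A} and the corollary are stated for general systems; this gap is routine, since disintegrating $\mu$ over the $\sigma$-algebra of $T$-invariant sets reduces a.e.\ convergence on $X$ to a.e.\ convergence on $\mu$-a.e.\ ergodic component (the set on which the averages converge is measurable, so its measure is the integral of its measures over the ergodic components), and the latter is covered by \cite{B}. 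Feeding this into the ``if'' direction of Theorem \ref{A} yields the corollary.

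I do not expect any genuine obstacle internal to this argument: the substance lies entirely in Theorem \ref{A}, established earlier in the paper, and in the deep theorem of Bourgain. The only thing to be attentive to is the matching of quantifiers — Theorem \ref{A} requires a.e.\ convergence of the unweighted averages for \emph{every} pair of bounded functions, and \cite{B} supplies exactly this, so the implication applies verbatim. Everything else is bookkeeping.
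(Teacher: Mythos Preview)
Your proposal is correct and matches the paper's own argument: the paper derives Corollary~\ref{cor1} directly from Theorem~\ref{A} together with Bourgain's double recurrence theorem~\cite{B}, exactly as you describe. Your remark about ergodic decomposition is harmless extra caution---Bourgain's theorem in~\cite{B} is already stated for general (not necessarily ergodic) systems, so no reduction is needed.
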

	\begin{cor}\label{cor2}
		Given $d\in \N$, let $a_1,\cdots,a_d$ be distinct non-zero integers. Let $(X,\B,\mu,T)$ be an ergodic distal measure preserving system $($for definition, see \cite[Definition 5.4]{HSY19}$)$. Then for any $f_{1},\cdots,f_{d}\in L^{\infty}(\mu)$, there exists full measure subset $X(f_{1},\cdots,f_{d})$ of $X$ such that for any $x\in X(f_{1},\cdots,f_{d})$, we have
		\begin{itemize}
			\item[(1)] for any nilsequence $\textbf{b}=\{b_n\}_{n\in \Z}$, $$ \lim_{N\rightarrow \infty}\frac{1}{N}\sum_{n=0}^{N-1}b_{n}\prod_{j=1}^{d}f_{j}(T^{a_{j}n}x)$$ exists;
			\item[(2)] for any system $(Y,\mathcal{D},\nu,S)$, any $k\ge 1$, and any $g_{1},\cdots,g_{k}\in L^{\infty}(\nu)$,  $$\lim_{N\rightarrow \infty}\frac{1}{N}\sum_{n=0}^{N-1}\prod_{j=1}^{d}f_{j}(T^{a_{j}n}x)\prod_{i=1}^{k}S^{in}g_i(y)$$ exists in $L^{2}(\nu)$. 
		\end{itemize}
	\end{cor}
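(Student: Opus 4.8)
\emph{Plan.} Part~(1) follows by combining Theorem~\ref{A} with \cite[Theorem C]{HSY19}; part~(2) then follows from part~(1) together with \cite[Theorem 2.24]{HKU}. I would arrange things so that a single full measure set $X(f_1,\dots,f_d)$ witnesses both conclusions.

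\emph{Part (1).} Since $(X,\B,\mu,T)$ is an ergodic distal system, \cite[Theorem C]{HSY19} guarantees that for all $h_1,\dots,h_d\in L^\infty(\mu)$ the averages $\frac{1}{N}\sum_{n=0}^{N-1}\prod_{j=1}^d h_j(T^{a_jn}x)$ converge for $\mu$-a.e.\ $x$. This is exactly the left-hand condition of the equivalence in Theorem~\ref{A}, so the right-hand condition holds: for every $f_1,\dots,f_d\in L^\infty(\mu)$ there is a full measure set $X(f_1,\dots,f_d)\subseteq X$ such that for every $x\in X(f_1,\dots,f_d)$ and every nilsequence $\mathbf b=\{b_n\}_{n\in\Z}$ the average $\frac{1}{N}\sum_{n=0}^{N-1}b_n\prod_{j=1}^d f_j(T^{a_jn}x)$ converges. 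This is~(1), and I would fix this same set $X(f_1,\dots,f_d)$ for part~(2).

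\emph{Part (2).} Fix $x\in X(f_1,\dots,f_d)$ and set $w_n:=\prod_{j=1}^d f_j(T^{a_jn}x)$. By part~(1), $\mathbf w=\{w_n\}_{n\in\Z}$ is a bounded sequence whose Ces\`aro averages against every nilsequence converge, i.e.\ a Wiener--Wintner good weight; these are precisely the weights handled by \cite[Theorem 2.24]{HKU}, whose conclusion, applied to $(Y,\mathcal D,\nu,S)$, is that $\frac{1}{N}\sum_{n=0}^{N-1}w_n\prod_{i=1}^k S^{in}g_i(y)$ converges in $L^2(\nu)$ for every $k\ge1$ and all $g_1,\dots,g_k\in L^\infty(\nu)$; since $w_n\prod_i S^{in}g_i(y)=\prod_{j=1}^d f_j(T^{a_jn}x)\prod_{i=1}^k S^{in}g_i(y)$, this is~(2). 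The mechanism behind \cite[Theorem 2.24]{HKU} that I would invoke is the Host--Kra decomposition of Wiener--Wintner good weights: for a given $k$ and $\varepsilon>0$ one writes $w_n=\psi_n+u_n$ with $\psi$ a nilsequence and the relevant uniformity seminorm of $u$ below $\varepsilon$; the $\psi$-contribution converges in $L^2(\nu)$ by Theorem~\ref{P1} applied to $(Y,\mathcal D,\nu,S)$ with the distinct nonzero iterates $1,2,\dots,k$ and the nilsequence weight $\psi$, while the $u$-contribution is $O(\varepsilon)$ in $L^2(\nu)$ by an iterated van der Corput estimate (the uniformity seminorm controls exactly this sort of weighted multiple average), so letting $\varepsilon\to0$ produces a Cauchy sequence. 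If one wishes to sidestep any non-ergodic issue in \cite[Theorem 2.24]{HKU}, one decomposes $\nu$ into its $S$-ergodic components, runs the above on each, and reassembles: the averages are bounded by $\prod_j\|f_j\|_\infty\prod_i\|g_i\|_\infty$ uniformly in $N$ and $y$, so dominated convergence transfers componentwise $L^2$-convergence back to $L^2(\nu)$.

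\emph{Main obstacle.} All of the analytic substance is carried by the three inputs: Theorem~\ref{A} (the purely ergodic-theoretic equivalence, which is the bulk of this paper), \cite[Theorem C]{HSY19} (pointwise convergence for distal systems), and \cite[Theorem 2.24]{HKU} (Wiener--Wintner good weights are good weights for Furstenberg averages). What remains is bookkeeping: recognizing that $\mathbf w$ attached to $x\in X(f_1,\dots,f_d)$ is genuinely a Wiener--Wintner good weight (immediate from~(1)), and that one full measure set serves both conclusions (automatic, since~(2) uses only the weight property of $\mathbf w$ and no finer information about $x$). The single point deserving care, if \cite[Theorem 2.24]{HKU} is spelled out, is that it is the uniformity seminorms of $u$, not mere boundedness of $w$, that force the vanishing of the uniform part; boundedness alone would not suffice.
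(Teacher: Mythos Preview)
Your proposal is correct and follows exactly the route indicated in the paper: the paper does not give a detailed proof of this corollary but simply states that it follows from Theorem~\ref{A}, \cite[Theorem C]{HSY19}, and \cite[Theorem 2.24]{HKU}, which is precisely your argument for parts~(1) and~(2) respectively. Your additional unpacking of the mechanism behind \cite[Theorem 2.24]{HKU} and the remark on ergodic decomposition are more than the paper provides, but consistent with it.
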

	We remark for the $(2)$ of Corollary \ref{cor2}, Assani and Moore \cite[Theorem 1.5]{AM2015} built the corresponding result for $d=2$ without requiring ergodic distal condition. 
	
		Next, we apply Theorem \ref{A} to study some polynomial ergodic averages for multiple measure preserving transformations under lacking complete commutativity. Due to lacking complete commutativity, we have to do some restrictions for transformations such as zero entropy which can help us to use disjointness between zero entropy and completely positive entropy to attack some polynomial ergodic averages. The reason for which Theorem \ref{A} can work for some polynomial ergodic averages is the characteristic behavior of $\infty$-step factor $($ for definition, see Remark \ref{rem}.$)$.
		
		Recently, Frantzikinakis and Host established a result on norm convergence of some polynomial ergodic averages. 
		\begin{thm}
		$($\cite[Theorem 1.1]{HN}$)$Let $T,S$ be invertible measure preserving transformations acting on a Lebesgue space $(X,\B,\mu)$ such that $(X,\B,\mu,T)$ has zero entropy and let $p(n)$ be a non-constant non-linear integer coefficents polynomial. Then for any $f,g\in L^{\infty}(\mu)$, the limit $$\lim_{N\rightarrow \infty}\frac{1}{N}\sum_{n=1}^{N}f(T^{n}x)g(S^{p(n)}x)$$ exists in $L^{2}(\mu)$.
		\end{thm}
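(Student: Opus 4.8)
My plan is three steps: reduce the $S$-coordinate of the average to its pro-nilpotent Host--Kra factor, observe that the $S$-orbit then produces a nilsequence, and finish by the generalized Wiener--Wintner Theorem on the $T$-coordinate. After the routine normalisations $\norm{f}_\infty,\norm{g}_\infty\le1$ and $\deg p=d\ge2$, and after passing to an ergodic component for the joint action of $T$ and $S$ --- which leaves $T$ of zero entropy, the entropy of $T$ being affine on its invariant measures --- the decisive step is the following. Writing $\mathcal{Z}_{k}(S)$ for the $k$-th Host--Kra factor of $(X,\B,\mu,S)$, I would prove that for a suitable finite $k=k(d)$ and $g'=g-\E[g\mid\mathcal{Z}_{k}(S)]$ one has $\frac{1}{N}\sum_{n=0}^{N-1}f(T^{n}x)g'(S^{p(n)}x)\to0$ in $L^{2}(\mu)$, so that $g$ may be replaced by its conditional expectation onto $\mathcal{Z}_{k}(S)$.

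This reduction is the heart of the matter, and the hardest part. It runs by van der Corput and PET induction on $d$: each differencing in $n$ turns $f$ into a product of shifts of $f$ read along the $T$-orbit of $x$, and replaces the $S$-polynomial $p(n)$ by the pair $\{p(n+h),p(n)\}$, whose difference $p(n+h)-p(n)$ has degree $d-1$; after $d$ steps the $S$-side has become an ordinary commuting multiple ergodic average for $S$, and such an average --- even carrying a weight --- is controlled in $L^{2}$ by a finite Host--Kra seminorm $\nnorm{\,\cdot\,}$ of $g'$ relative to $S$, \emph{provided} the weight coming from the $T$-orbit is sufficiently deterministic. This proviso is exactly where the zero-entropy hypothesis on $T$ is indispensable: an arbitrary bounded weight would defeat the seminorm estimate (take $c_{n}=\overline{g'(S^{p(n)}x_{0})}$), whereas a multilinear expression built from a zero-entropy system is governed by the $\infty$-step (pro-nilpotent) factor of $T$ --- the characteristic-factor phenomenon highlighted in the introduction --- and hence is disjoint from the completely positive entropy structure that could otherwise resonate with the uniform part of $g'$; the nonlinearity of $p$, which makes $p(n+h)-p(n)$ genuinely non-constant, is what allows the differencing to reach this controllable regime. (This is in essence the content of \cite[Theorem 1.1]{HN}.)

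Granting the reduction, the rest is short. By a standard approximation --- $\E[g\mid\mathcal{Z}_{k}(S)]$ being an $L^{2}$-limit of continuous functions on finite-step nilfactors --- it suffices to treat $\frac{1}{N}\sum_{n=0}^{N-1}f(T^{n}x)\,h(S^{p(n)}x)$ with $h$ continuous on a finite-step nilfactor, so that $n\mapsto h(S^{p(n)}x)$ is a genuine nilsequence for every $x$ (a polynomial orbit on a nilmanifold). Now Theorem \ref{thm3}, applied to $(X,\B,\mu,T)$ and $f$, produces a full-measure set $X(f)$ on which $\frac{1}{N}\sum_{n=0}^{N-1}b_{n}f(T^{n}x)$ converges for \emph{every} nilsequence $\textbf{b}=\{b_{n}\}_{n\in\Z}$; taking $b_{n}=h(S^{p(n)}x)$ gives pointwise a.e.\ convergence, hence by bounded convergence $L^{2}(\mu)$-convergence, of $\frac{1}{N}\sum_{n=0}^{N-1}f(T^{n}x)h(S^{p(n)}x)$, and therefore of the original averages. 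I would expect that in the several-transformation or several-polynomial refinements this paper is aimed at --- e.g.\ averages of the form $\frac{1}{N}\sum_{n=0}^{N-1}\prod_{j}f_{j}(T^{a_{j}n}x)\,g(S^{p(n)}x)$ --- it is exactly at this last step that Theorem \ref{A} and Corollary \ref{cor1} take over the role played here by the single-function Theorem \ref{thm3}.
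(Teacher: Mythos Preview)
This statement is quoted in the paper as a result of Frantzikinakis and Host (\cite[Theorem~1.1]{HN}) and is not re-proved here; the relevant comparison is with the paper's proof of the generalization, Theorem~\ref{B}. Your second step---after the reduction to the pro-nilfactor of $S$, approximate $g$ by functions giving nilsequences along $p(n)$ and apply the (generalized) Wiener--Wintner theorem---is correct and is exactly Part~II of that proof, with Theorem~\ref{thm3} in place of Corollary~\ref{cor1}.

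The gap is in your first step. Two concrete problems. First, ``passing to an ergodic component for the joint action of $T$ and $S$'' is not available: the hypotheses impose no commutation between $T$ and $S$, so there is no joint $\Z^{2}$-action to decompose. Second, and more seriously, your proposed mechanism---van der Corput plus PET induction until the $S$-side becomes a standard multiple average, then a Host--Kra seminorm bound ``provided the $T$-weight is sufficiently deterministic''---is not how the reduction actually goes, and as written it is not a proof. After differencing you face averages of the form $\frac{1}{N}\sum_{n}c_{n}(x)\prod_{j}g'(S^{q_{j}(n)}x)$ with $c_{n}(x)$ built from the $T$-orbit; the usual seminorm estimates for multiple averages do \emph{not} survive an arbitrary bounded weight, and you give no concrete argument turning ``$T$ has zero entropy'' into a usable bound here. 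Your parenthetical ``(This is in essence the content of \cite[Theorem~1.1]{HN})'' is precisely the statement to be proved, so invoking it is circular.

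The route taken in \cite{HN}, and mirrored in Part~I of the proof of Theorem~\ref{B}, is different in kind: one builds Furstenberg systems for the sequences $\{f(T^{n}x)\}$ and $\{g(S^{p(n)}x)\}$ along a subsequence, shows (via \cite[Theorem~2.8]{FN2022}) that if $\E(g\mid\mathcal{Z}_{\infty}(S))=0$ then the coordinate function $F_{0}$ is orthogonal to the Pinsker factor of the $S$-Furstenberg system, observes that the $T$-Furstenberg system has zero entropy because $T$ does, and concludes by disjointness that any joining integrates $F_{0}\otimes G_{0}$ to zero. No PET induction enters, and the zero-entropy hypothesis is used through Pinsker-factor disjointness in the Furstenberg-system model, not through a weighted seminorm inequality.
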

		 The zero entropy assumption on $T$ in the above result is necessary. If there is no zero entropy restriction for $T$, one can find a counterexample  from \cite[Proof of Proposition 1.4]{HN}. Motivated by the above theorem, we establish the following result.
		\begin{thm}\label{B}
			Given $d\in \N$, let $T,S_{1},\cdots,S_{d}$ be invertible measure preserving transformations acting on a Lebesgue space $(X,\B,\mu)$ such that $(X,\B,\mu,T)$ has zero entropy and $S_{1},\cdots,S_{d}$ are commuting and let $p_{1}(n),\cdots,p_{d}(n)$ be non-constant non-linear integer coefficents polynomials with distinct degrees. Let $a$ and $b$ be two distinct non-zero integers. Then for any $f_{1},f_{2},g_{1},\cdots,g_{d}\in L^{\infty}(\mu)$, the limit $$\lim_{N\rightarrow \infty}\frac{1}{N}\sum_{n=1}^{N}f_{1}(T^{an}x)f_{2}(T^{bn}x)\prod_{j=1}^{d}g_{j}(S_{j}^{p_{j}(n)}x)$$ exists in $L^{2}(\mu)$.
		\end{thm}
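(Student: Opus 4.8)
The plan is to combine the bilinear Wiener--Wintner statement of Corollary \ref{cor1} with the theory of characteristic factors for polynomial ergodic averages, using the zero-entropy hypothesis on $T$ to annihilate the ``uniform'' part of the polynomial factor. Write
\[
A_N(x):=\frac1N\sum_{n=1}^N f_1(T^{an}x)f_2(T^{bn}x)\prod_{j=1}^d g_j(S_j^{p_j(n)}x),\qquad w_n(x):=f_1(T^{an}x)f_2(T^{bn}x).
\]
Since $L^2(\mu)$ is complete it suffices to show that $(A_N)$ is Cauchy, and for that it is enough to produce, for every $\varepsilon>0$, a sequence $(B_N^{\varepsilon})$ that converges in $L^2(\mu)$ and satisfies $\limsup_{N\to\infty}\norm{A_N-B_N^{\varepsilon}}_{L^2(\mu)}\le C\varepsilon$, where $C$ depends only on the $L^\infty$ norms of the data; letting $\varepsilon\to0$ then concludes.

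The first step is a reduction of the polynomial factor to pro-nilfactors. Because $S_1,\dots,S_d$ commute and $p_1,\dots,p_d$ have distinct degrees, a van der Corput (PET) induction controls $A_N$ by the Host--Kra uniformity seminorms of the $g_j$ relative to $S_j$; thus replacing each $g_j$ by its conditional expectation onto a finite-step Host--Kra pro-nilfactor $Z_{k_j}(S_j)$, with $k_j$ depending only on $\deg p_j$, changes $A_N$ by a sequence whose $\limsup_N$ in $L^2(\mu)$ vanishes. Two features keep the induction alive. First, $f_1,f_2$ are only ever evaluated at iterates of the form $a\cdot(\text{linear in }n)$ and $b\cdot(\text{linear in }n)$, so each van der Corput operation merely replaces the weight by another one of the same bilinear shape $\widetilde f_1(T^{an}x)\widetilde f_2(T^{bn}x)$ on the zero-entropy system $(X,T)$. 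Second --- and this is the step I expect to be the main obstacle --- one must show that such a zero-entropy weight contributes nothing against the seminorm-zero (``uniform'') part of the $S_j$-polynomial factor; since $T$ need not commute with the $S_j$, this rests on the disjointness between zero-entropy systems and systems of completely positive entropy (equivalently, the determinism of $(X,T)$), exactly in the spirit of \cite{HN}. After this step we may assume each $g_j$ is measurable with respect to a finite-step pro-nilfactor of $(X,S_j)$.

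It remains to treat $A_N$ when each $g_j$ lives on such a pro-nilfactor, where I would conclude via Corollary \ref{cor1}. Using the inverse limit structure, approximate each $g_j$ in $L^2(\mu)$ to within $\varepsilon$ by a continuous function $\widehat g_j$ on a genuine nilsystem factor of $(X,S_j)$; by measure-preservation and telescoping, $\prod_j g_j(S_j^{p_j(n)}x)-\prod_j\widehat g_j(S_j^{p_j(n)}x)$ has $L^2(\mu)$-norm $O(\varepsilon)$ uniformly in $n$, so after weighting by the bounded $w_n$ it contributes $O(\varepsilon)$ to $\limsup_N\norm{\cdot}_{L^2(\mu)}$. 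Since a polynomial orbit on a nilmanifold composed with a continuous function generates a nilsequence, and products of nilsequences are nilsequences, for almost every $x$ the sequence $n\mapsto\prod_{j=1}^d\widehat g_j(S_j^{p_j(n)}x)$ is a nilsequence. Now invoke Corollary \ref{cor1}: there is a full-measure set $X(f_1,f_2)$ on which $\frac1N\sum_n b_n f_1(T^{an}x)f_2(T^{bn}x)$ converges for \emph{every} nilsequence $\mathbf b$. Restricting to $x$ in the intersection of $X(f_1,f_2)$ with the full-measure set just described, and feeding in the nilsequence $n\mapsto\prod_j\widehat g_j(S_j^{p_j(n)}x)$, we obtain that $B_N^{\varepsilon}(x):=\frac1N\sum_{n=1}^N f_1(T^{an}x)f_2(T^{bn}x)\prod_j\widehat g_j(S_j^{p_j(n)}x)$ converges almost everywhere, hence, being uniformly bounded, in $L^2(\mu)$ by dominated convergence. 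Combining the two steps yields $\limsup_N\norm{A_N-B_N^{\varepsilon}}_{L^2(\mu)}=O(\varepsilon)$ for every $\varepsilon>0$, so $(A_N)$ converges in $L^2(\mu)$.
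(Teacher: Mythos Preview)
Your structured-part argument is essentially the paper's Part II: once each $g_j$ lives on a pro-nilfactor of $(X,S_j)$, approximate so that $n\mapsto\prod_j\widehat g_j(S_j^{p_j(n)}x)$ is a nilsequence for a.e.\ $x$, feed this into Corollary~\ref{cor1}, and pass to $L^2$ by dominated convergence. The paper does exactly this (invoking \cite[Theorems 14.15, 16.10]{HK-book} for the approximation and \cite[Corollary 2.2]{DL} to upgrade the approximate convergence).

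The gap is in your uniform part. You assert that ``a van der Corput (PET) induction controls $A_N$ by the Host--Kra uniformity seminorms of the $g_j$ relative to $S_j$'', but this is precisely what is \emph{not} available off the shelf when the weight $w_n(x)=f_1(T^{an}x)f_2(T^{bn}x)$ comes from a transformation that need not commute with the $S_j$: at the base of any PET scheme you still have an average of the form $\frac1N\sum_n \widetilde w_n(x)\,\widetilde g(S_j^{q(n)}x)$, and without commutativity there is no mechanism to pass from this to $\nnorm{\widetilde g}_k$ for the $S_j$-seminorm. You correctly flag this as the main obstacle but do not resolve it. The paper's Part~I does \emph{not} go through PET; it argues by contradiction via Furstenberg systems. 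Assuming $\mathbb E(g_j\mid\mathcal Z_\infty(S_j))=0$ for some $j$ and $\norm{A_{N_k}}_2\ge\varepsilon$ along a subsequence, Lemma~\ref{lem4} (resting on \cite[Theorem 2.8]{FN2022} and \cite[Theorem 1.1]{W}) produces, for a.e.\ $x$, a Furstenberg system for $\{\prod_j g_j(S_j^{p_j(n)}x)\}_n$ in which the $0$-th coordinate $F_0$ is orthogonal to the Pinsker factor; meanwhile the Furstenberg systems of $\{f_1(T^{an}x)\}_n$ and $\{f_2(T^{bn}x)\}_n$ have zero entropy (by \cite[Proposition 2.5]{HN} and the zero-entropy hypothesis on $T$), hence so does any joining of them. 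Taking a further subsequential joining of all three and applying the zero-entropy/Pinsker disjointness (\cite[Proposition 2.1]{HN}) forces $\int F_0\otimes G_0^a\otimes G_0^b\,d\rho_x=0$, contradicting $\norm{A_{N_k}}_2\ge\varepsilon$. This Furstenberg-systems mechanism is the content your sketch is missing; note also that the paper's decomposition is with respect to $\mathcal Z_\infty(S_j)$ rather than a finite-step $\mathcal Z_{k_j}(S_j)$, precisely because Lemma~\ref{lem4} is formulated at the $\mathcal Z_\infty$ level.
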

	\subsection*{Organization of the paper}In Section 2, we recall some notions and needed results. In section 3, we introduce two useful estimations. In sections 4 and 5, we prove Theorem \ref{A} and Theorem \ref{B} respectively.

	\section{Preliminaries}
	In this section we recall some basic notions and results.
	\subsection{Factor and joining}
	A \emph{factor} of a system $(X,\B,\mu,T)$ is a $T$-invariant sub-$\sigma$-algebra of $\B$. A \emph{factor map} from $(X,\B,\mu,T)$ to $(Y,\mathcal{D},\nu,S)$ is a measurable map $\pi:X_{0}\rightarrow Y_{0}$ with $\pi\circ T=S\circ\pi$ and such that $\nu$ is the image of $\mu$ under $\pi$ where $X_0$ is a $T$-invariant full measurable subset of $X$ and $Y_0$ is a $S$-invariant full measurable subset of $Y$. In this case, $\pi^{-1}(\mathcal{D})$ is a factor of $(X,\B,\mu,T)$ and every factor of $(X,\B,\mu,T)$ can be obtained in this way.
	
	A \emph{joining} of two systems $(X,\B,\mu,T)$ and $(Y,\mathcal{D},\nu,S)$ is a probability measure $\lambda$ on $X\times Y$, invariant under $T\times S$ and whose projections on $X$ and $Y$ are $\mu$ and $\nu$ respectively. Likely, we can define joining on more systems. 
	
	To study non-conventional ergodic averages on $(X,\B,\mu,T)$, \emph{Furstenberg's self-joining} was introduced.
	
	At first, we claim some notations. Given $d\in \N$, let $X^{d}$ denote the space $X\times \cdots  \times X(d\ times)$ where an element of $X^{d}$ can be written as $\textbf{x}=(x_{1},\cdots,x_d)$ and $\B^{\otimes d}$ denote the product $\sigma$-algebra. 
	
	Let $\mathcal{A}=\{a_{1}n,\cdots,a_{d}n\}$ where $a_{1},\cdots,a_{d}$ are distinct non-zero integers. We define a measure $\mu^{\mathcal{A}}_{d}$ on $(X^{d},\B^{\otimes d})$ by 
	$$
	\mu^{\mathcal{A}}_{d}(A_{1}\times \cdots \times A_{d})=\lim_{N\rightarrow \infty}\frac{1}{N}\sum_{n=0}^{N-1}\int_{X}\prod_{i=1}^{d}T^{a_{i}n}1_{A_{i}}d\mu
	$$
	where $A_{i}\in \B,1\le i\le d$. Existence of the above limit can be guaranteed by \cite[Theorem 1.1]{HK}.
	We refer to $\mu^{\mathcal{A}}_{d}$ as the Furstenberg's self-joining of $(X,\B,\mu,T)$ with respect to polynomial family $\mathcal{A}$ in the light of its historical genesis in Furstenberg’s work on the ergodic-theoretic proof for Szemer{\'e}di’s theorem \cite{F}. By an easy observation, we have the following result.
	\begin{prop}\label{prop1}
		$(X^{d},\B^{\otimes d},\mu^{\mathcal{A}}_{d},T^{a_{1}}\times \cdots \times T^{a_{d}})$ is a measure preserving system.
	\end{prop}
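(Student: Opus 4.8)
The plan is to exhibit $\mu^{\mathcal{A}}_{d}$ as the weak-$*$ limit of honest probability measures and then to read off $(T^{a_1}\times\cdots\times T^{a_d})$-invariance from a one-step shift of the defining average. First I would realise $(X,\B,\mu)$ on a compact metric space and, for each $n\ge 0$, consider the measurable map $\Phi_n\colon X\to X^{d}$, $\Phi_n(x)=(T^{a_1n}x,\dots,T^{a_dn}x)$, together with the probability measures $\nu_N:=\frac1N\sum_{n=0}^{N-1}(\Phi_n)_*\mu$ on $(X^{d},\B^{\otimes d})$. By construction, $\nu_N$ evaluated on a rectangle $A_{1}\times\cdots\times A_{d}$ equals $\frac1N\sum_{n=0}^{N-1}\int_{X}\prod_{i=1}^{d}1_{A_i}(T^{a_in}x)\,d\mu$, which is exactly the average appearing in the definition of $\mu^{\mathcal{A}}_{d}$. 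Since \cite[Theorem 1.1]{HK} guarantees that this average converges on every rectangle, and rectangles form an intersection-stable generating family for $\B^{\otimes d}$, weak-$*$ compactness forces the whole sequence $(\nu_N)$ to converge; its limit is a probability measure on $(X^{d},\B^{\otimes d})$ agreeing with the set function of the statement, so $\mu^{\mathcal{A}}_{d}$ is a genuine probability measure. (Taking all but one factor equal to $X$ and using $T$-invariance of $\mu$ also identifies every coordinate marginal of $\mu^{\mathcal{A}}_{d}$ with $\mu$, so it is in fact a self-joining, though that is not needed here.)

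For the invariance, put $S:=T^{a_1}\times\cdots\times T^{a_d}$, which is clearly an invertible bimeasurable transformation of $(X^{d},\B^{\otimes d})$. Given bounded measurable $f_{1},\dots,f_{d}$ on $X$, write $F:=f_1\otimes\cdots\otimes f_d$ for the function $(x_1,\dots,x_d)\mapsto\prod_{i=1}^{d}f_i(x_i)$ on $X^{d}$; the identity $f_i(T^{a_i}T^{a_in}x)=f_i(T^{a_i(n+1)}x)$ gives
\begin{align*}
\int_{X^{d}}F\circ S\,d\nu_N
&=\frac1N\sum_{n=0}^{N-1}\int_{X}\prod_{i=1}^{d}f_i\big(T^{a_i(n+1)}x\big)\,d\mu\\
&=\int_{X^{d}}F\,d\nu_N+\frac1N\Big(\int_{X}\prod_{i=1}^{d}f_i\big(T^{a_iN}x\big)\,d\mu-\int_{X}\prod_{i=1}^{d}f_i\,d\mu\Big),
\end{align*}
and the error term has modulus at most $\frac2N\prod_{i=1}^{d}\|f_i\|_{\infty}$. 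Letting $N\to\infty$ yields $\int_{X^{d}}F\circ S\,d\mu^{\mathcal{A}}_{d}=\int_{X^{d}}F\,d\mu^{\mathcal{A}}_{d}$; taking $f_i=1_{A_i}$ this says $\mu^{\mathcal{A}}_{d}\big(S^{-1}(A_{1}\times\cdots\times A_{d})\big)=\mu^{\mathcal{A}}_{d}(A_{1}\times\cdots\times A_{d})$ for every rectangle. Since the rectangles form a $\pi$-system generating $\B^{\otimes d}$, Dynkin's $\pi$-$\lambda$ theorem promotes this to $S_*\mu^{\mathcal{A}}_{d}=\mu^{\mathcal{A}}_{d}$ on all of $\B^{\otimes d}$, and together with the invertibility of $S$ this is precisely the assertion that $(X^{d},\B^{\otimes d},\mu^{\mathcal{A}}_{d},S)$ is a measure preserving system.

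No real obstacle arises here — this is why the statement is billed as an easy observation. The only points that need a line of care are (i) upgrading the set function defined on rectangles to a genuine measure on $\B^{\otimes d}$, which is exactly where \cite[Theorem 1.1]{HK} together with weak-$*$ compactness of $(\nu_N)$ is used, and (ii) passing the invariance identity from rectangles to the full product $\sigma$-algebra via the $\pi$-$\lambda$ theorem; both are routine measure-theoretic bookkeeping.
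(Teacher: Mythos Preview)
The paper does not actually supply a proof of this proposition; it merely says ``By an easy observation'' and states the result. Your argument is exactly the standard way to unpack that observation: realise $\mu^{\mathcal{A}}_{d}$ as a limit of the Ces\`aro averages of the push-forwards $(\Phi_n)_*\mu$ (using the $L^2$-convergence of \cite[Theorem 1.1]{HK} to guarantee existence of the limit), and then read off $S$-invariance from the telescoping identity that shifts the averaging index by one, extending from rectangles to all of $\B^{\otimes d}$ via the $\pi$--$\lambda$ theorem. This is correct and is undoubtedly what the authors have in mind.

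One small point of care in your write-up: the sentence ``weak-$*$ compactness forces the whole sequence $(\nu_N)$ to converge'' is slightly loose, because weak-$*$ convergence along a subsequence does not by itself give convergence of $\nu_{N_k}(A_1\times\cdots\times A_d)$ for general measurable rectangles. The clean way around this is to note that \cite[Theorem 1.1]{HK} already gives convergence of $\int f_1\otimes\cdots\otimes f_d\,d\nu_N$ for \emph{all} bounded measurable $f_i$, in particular for continuous $f_i$ in your compact model; Stone--Weierstrass then shows $(\nu_N)$ is weak-$*$ Cauchy directly, and the values on rectangles are recovered by taking $f_i=1_{A_i}$. This is routine bookkeeping and does not affect the substance of your argument.
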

	\subsection{Nilsystem and nilsequence}\label{subsec}
	Let $k\in\N$. A \emph{$k$-step nilmanifold} $X$ is the quotient space $G/\Gamma$ where $G$ is a $k$-step nilpotent Lie group and $\Gamma$ is a cocompact discrete subgroup of $G$. The group $G$ acts on $X$ by left translations. The unique invariant Borel probability measure on $X$ under the action is called the Harr measure of $X$ denoted by $m_{X}$. If $a\in G$ and $T_{a}:X\rightarrow X$ is the translation $x\mapsto a\cdot x$, the system $(X,\B_{X},T_{a},m_{X})$ is a \emph{$k$-step nilsystem} where $\B_{X}$ is the Borel $\sigma$-algebra of $X$.
	
	A \emph{basic $k$-step nilsequence} is the sequence $\{f(a^{n}\cdot x)\}_{n\in \Z}$ where $f\in C(X),a\in G,x\in X$. A \emph{$k$-step nilsequence} is a uniform limit of basic $k$-step nilsequences. Clearly, all $k$-step nilsequences is an invariant  algebra of $l^{\infty}(\Z)$ under translation.(For more details, see \cite[Chapter 11]{HK-book}).
	
	\subsection{Host-Kra seminorm and local seminorm}
	In 2005, Host and Kra introduced Host-Kra seminorm on $(X,\B,\mu,T)$ when they considered the norm convergence of non-conventional ergodic averages in \cite{HK}.
	
	At first, we introdue some notations. Given $k\in \N$, let $V_{k}=\{0,1\}^{k}$ where an element of $V_{k}$ can be written as $\underline{\epsilon}=(\epsilon_1,\cdots,\epsilon_k)$,  $X^{[k]}=X^{2^{k}}$ where an element of $X^{[k]}$ can be written as $\underline{x}=(x_{\underline{\epsilon}}: \underline{\epsilon}\in V_{k})$ ,$T^{[k]}=T\times \cdots \times T(2^{k}\ times)$ and $\B^{\otimes [k]}=\B^{\otimes 2^{k}}$. Given $N\in \N$, let $[N]^{k}=\{\underline{h}=(h_1,\cdots,h_k):0\le h_1,\cdots,h_k\le N-1\}$. 
	
	Now, we define $\mu^{[k]}$ by induction. Let $(\Omega,\mathcal{F},m,S) $ denote the factor associated to $\mathcal{I}(T)$ where $\mathcal{I}(T)$ is a sub-$\sigma$-algebra of $\B$ consisting of all $T$-invariant subsets. Let $$\mu^{[1]}=\int_{\Omega}\mu_{w}\times \mu_{w}dm(w)$$ where $$\mu=\int_{\Omega}\mu_{w}dm(w)$$ is the ergodic decomposition of $\mu$ with respect to $T$. Then we get a measure preserving system $(X^{[1]},\B^{\otimes [1]},\mu^{[1]},T^{[1]})$. Let $k\in \N$. If we have defined $\mu^{[k]}$, we can define $\mu^{[k+1]}$ as follows. Let $(\Omega_k,\mathcal{F}_k,m_k,S_k) $ denote the factor associated to $\mathcal{I}(T^{[k]})$ where $\mathcal{I}(T^{[k]})$ is a sub-$\sigma$-algebra of $\B^{\otimes [k]}$ consisting of all $T^{[k]}$-invariant subsets. Let $$\mu^{[k+1]}=\int_{\Omega_k}\mu^{[k]}_{w}\times \mu^{[k]}_{w}dm_{k}(w)$$ where $$\mu^{[k]}=\int_{\Omega_k}\mu^{[k]}_{w}dm_{k}(w)$$ is the ergodic decomposition of $\mu^{[k]}$ with respect to $T^{[k]}$. 
	
	For any $f\in L^{\infty}(\mu)$ and any $k\in \N$, we define the \emph{Host-Kra seminorm} by $$\nnorm f_{k}=\Big(\int_{X^{[k]}}\prod_{\underline{\epsilon} \in V_{k}}C^{|\underline{\epsilon}|}f(x_{\underline{\epsilon}})d\mu^{[k]}(\underline{x})\Big)^{\frac{1}{2^{k}}}$$ where $C$ is a complex conjugation operator and $|\underline{\epsilon}|=\epsilon_{1}+\cdots + \epsilon_{k}$.
	
	The follwoing results on Host-Kra seminorm will be used in the proof.
	\begin{thm}\label{thm1}
		$($\cite[Lemma 8.12, Proposition 8.16 and Theorem 9.7]{HK-book}$)$For a system $(X,\B,\mu,T)$, $k\ge 1$ and any $f\in L^{\infty}(\mu)$, we have :
		\begin{itemize}
			\item[(1)]$\nnorm f_{k+1}^{2^{k+1}}=\lim_{H\rightarrow \infty}\frac{1}{H}\sum_{h=0}^{H-1}\nnorm {f\cdot T^{h}\overline{f}}_{k}^{2^{k}}$;
			\item[(2)] $\nnorm f_{k}\le \nnorm f_{k+1}$;
			\item[(3)] there exists a factor $\mathcal{Z}_{k}(T)$, called $k$-step factor of $(X,\B,\mu,T)$, such that $\nnorm f_{k+1}=0$ if and only if $\mathbb{E}(f|\mathcal{Z}_{k}(T))=0$.
		\end{itemize} 
	\end{thm}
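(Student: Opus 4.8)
The plan is to establish the three assertions in sequence, each deduced from the inductive definition of the measures $\mu^{[k]}$ together with standard Hilbert-space and mean-ergodic arguments; the seminorm property and the factor in part (3) are the only genuinely structural points.

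For (1), I would first decompose $X^{[k+1]}=X^{[k]}\times X^{[k]}$, writing $\underline{x}\in X^{[k+1]}$ as a pair $(\underline{x}',\underline{x}'')$ according to the value of the last coordinate $\epsilon_{k+1}\in\{0,1\}$. Setting $\Phi(\underline{y})=\prod_{\underline{\eta}\in V_{k}}C^{|\underline{\eta}|}f(y_{\underline{\eta}})$ on $X^{[k]}$ and observing that every vertex with $\epsilon_{k+1}=1$ carries exactly one extra conjugation, the defining integral factors as
$$\nnorm f_{k+1}^{2^{k+1}}=\int_{X^{[k+1]}}\Phi(\underline{x}')\,\overline{\Phi(\underline{x}'')}\,d\mu^{[k+1]}(\underline{x}).$$
Substituting $\mu^{[k+1]}=\int_{\Omega_k}\mu^{[k]}_{w}\times\mu^{[k]}_{w}\,dm_{k}(w)$ and recognizing $\int_{X^{[k]}}\Phi\,d\mu^{[k]}_{w}=\mathbb{E}(\Phi\mid\mathcal{I}(T^{[k]}))(w)$, the whole quantity collapses to $\norm{\mathbb{E}(\Phi\mid\mathcal{I}(T^{[k]}))}_{L^{2}(\mu^{[k]})}^{2}$. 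I would then apply von Neumann's mean ergodic theorem, replacing the conditional expectation by the time average $\frac{1}{H}\sum_{h<H}\Phi\circ(T^{[k]})^{h}$ and expanding the inner product; after grouping conjugations the $h$-th correlation $\int_{X^{[k]}}\Phi\cdot\overline{\Phi\circ(T^{[k]})^{h}}\,d\mu^{[k]}$ simplifies precisely to $\nnorm{f\cdot T^{h}\overline f}_{k}^{2^{k}}$, and passing to the limit in $H$ gives (1). The base case $k=0$, with $\nnorm g_{0}=\int g\,d\mu$, reproduces $\nnorm f_{1}^{2}=\norm{\mathbb{E}(f\mid\mathcal{I}(T))}_{2}^{2}$. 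A byproduct, proved by induction on $k$, is that each $\nnorm f_{k}^{2^{k}}$ is a nonnegative real, so the $2^{k}$-th roots in the definition are legitimate.

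For (2), I would reuse the identity $\nnorm f_{k+1}^{2^{k+1}}=\norm{\mathbb{E}(\Phi\mid\mathcal{I}(T^{[k]}))}_{2}^{2}$ established above together with $\nnorm f_{k}^{2^{k}}=\int_{X^{[k]}}\Phi\,d\mu^{[k]}=\langle\mathbb{E}(\Phi\mid\mathcal{I}(T^{[k]})),1\rangle$. Since $m_{k}$ is a probability measure, the ordinary Cauchy--Schwarz inequality yields $\nnorm f_{k}^{2^{k}}\le\norm{\mathbb{E}(\Phi\mid\mathcal{I}(T^{[k]}))}_{2}=\nnorm f_{k+1}^{2^{k}}$, and taking $2^{k}$-th roots gives $\nnorm f_{k}\le\nnorm f_{k+1}$.

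Part (3) is where the real work lies, and I expect it to be the main obstacle. First I would establish that $\nnorm\cdot_{k+1}$ is genuinely a seminorm by proving the Cauchy--Schwarz--Gowers inequality: for a family $(f_{\underline{\epsilon}})_{\underline{\epsilon}\in V_{k+1}}$ one has $|\int_{X^{[k+1]}}\prod_{\underline{\epsilon}}C^{|\underline{\epsilon}|}f_{\underline{\epsilon}}\,d\mu^{[k+1]}|\le\prod_{\underline{\epsilon}}\nnorm{f_{\underline{\epsilon}}}_{k+1}$, obtained by iterating ordinary Cauchy--Schwarz across the $k+1$ coordinate directions of the self-joining; specializing all $f_{\underline{\epsilon}}=f$ recovers the seminorm and the diagonal inequality gives subadditivity. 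Next I would introduce the dual functions $\mathcal{D}_{k+1}g$, defined by integrating $\prod_{\underline{\epsilon}\ne\underline{0}}C^{|\underline{\epsilon}|}g(x_{\underline{\epsilon}})$ against the disintegration of $\mu^{[k+1]}$ over the coordinate $x_{\underline{0}}$; these satisfy $\langle g,\mathcal{D}_{k+1}g\rangle=\nnorm g_{k+1}^{2^{k+1}}$, and by Cauchy--Schwarz--Gowers $\nnorm f_{k+1}=0$ if and only if $\langle f,\mathcal{D}_{k+1}g\rangle=0$ for every $g\in L^{\infty}(\mu)$. The crux is then to show that the closed linear span of the dual functions is exactly the space of functions measurable with respect to a $T$-invariant sub-$\sigma$-algebra, which I would name $\mathcal{Z}_{k}(T)$; this requires verifying that this span is a $T$-invariant, conjugation-closed algebra, so that the associated orthogonal projection coincides with $\mathbb{E}(\cdot\mid\mathcal{Z}_{k}(T))$. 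Granting this, $\nnorm f_{k+1}=0$ is equivalent to $f$ being orthogonal to all dual functions, hence to $\mathbb{E}(f\mid\mathcal{Z}_{k}(T))=0$. The algebra and invariance properties of the dual functions, which rely on the symmetries of $\mu^{[k+1]}$ under the automorphisms of the cube and on the tower structure of the $\mu^{[k]}$, are the delicate structural step that makes (3) substantially harder than (1) and (2).
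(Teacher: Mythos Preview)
The paper does not supply its own proof of this theorem: it is quoted verbatim as a known result from \cite[Lemma 8.12, Proposition 8.16 and Theorem 9.7]{HK-book} and used as a black box. There is therefore no in-paper argument to compare against.

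That said, your outline is faithful to the standard Host--Kra development in the cited reference. Part (1) via the relative-product formula for $\mu^{[k+1]}$ together with von Neumann's mean ergodic theorem, and part (2) via Cauchy--Schwarz applied to $\nnorm f_{k}^{2^{k}}=\langle \mathbb{E}(\Phi\mid\mathcal{I}(T^{[k]})),1\rangle$, are exactly how these are handled in \cite{HK-book}. For part (3) your plan through the Cauchy--Schwarz--Gowers inequality and dual functions $\mathcal{D}_{k+1}g$ is again the Host--Kra route; the one place where your sketch is a bit optimistic is the claim that the closed linear span of dual functions forms an \emph{algebra}. In the actual proof one shows instead that products of dual functions can be uniformly approximated by dual functions of possibly higher order (or, in the version relevant here, that the span is closed under multiplication after passing to an $L^{2}$-closure), and the $T$-invariance uses the diagonal symmetry of $\mu^{[k+1]}$. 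These are precisely the ``delicate structural steps'' you flag, so your assessment of where the difficulty lies is accurate, but a full write-up would need to make the algebra property precise rather than assert it.
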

	\begin{rem}\label{rem}
		From the above, we can get a sequence of factors $\{\mathcal{Z}_{k}(T)\}_{k\ge 1}$ of $(X,\B,\mu,T)$. And, for any $k\in\N$, we have $\mathcal{Z}_{k}(T)\subset \mathcal{Z}_{k+1}(T)$. So we can define factor $\mathcal{Z}_{\infty}(T)$ called $\infty$-step factor by letting it be the smallest $\sigma$-algebra containing $\bigcup_{k\ge 1}\mathcal{Z}_{k}(T)$. $($For more details, see \cite[Chapter 8,9,16]{HK-book}$)$.
	\end{rem}
	\begin{prop}\label{prop2}
		$($\cite[Lemma 8]{L}$)$For a system $(X,\B,\mu,T)$, $k\ge 1$, $f\in L^{\infty}(\mu)$ which takes real value and $a\in \Z\backslash \{0\}$, one has $$\lim_{H\rightarrow \infty}\frac{1}{H}\sum_{h=0}^{H-1}\nnorm {f\cdot T^{ah}{f}}_{k}^{2^{k}}\le |a|\cdot\nnorm f_{k+1}^{2^{k+1}}.$$
	\end{prop}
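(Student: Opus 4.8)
The plan is to deduce the estimate from the defining recursion for the Host--Kra seminorms (Theorem \ref{thm1}(1)) together with an elementary comparison of Ces\`aro averages taken along $\Z_{\ge 0}$ and along the arithmetic progression $a\Z_{\ge 0}$. First I would reduce to the case $a>0$. Since $\mu^{[k]}$ is $T^{[k]}$-invariant, one has $\nnorm{g\circ T}_{k}=\nnorm{g}_{k}$ for every $g\in L^{\infty}(\mu)$; applying $T^{-ah}$ inside the seminorm to $g=f\cdot T^{ah}f$ gives $\nnorm{f\cdot T^{ah}f}_{k}=\nnorm{f\cdot T^{-ah}f}_{k}$, so the average attached to $a$ coincides with the one attached to $|a|$, and we may assume $a>0$. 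Set $c_{h}:=\nnorm{f\cdot T^{h}f}_{k}^{2^{k}}\ge 0$. Because $f$ is real-valued we have $\overline f=f$, so Theorem \ref{thm1}(1) reads $\lim_{H\to\infty}\frac{1}{H}\sum_{h=0}^{H-1}c_{h}=\nnorm{f}_{k+1}^{2^{k+1}}=:L$.

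The heart of the matter is then a one-line observation: the progression $\{0,a,2a,\dots,(H-1)a\}$ is contained in $\{0,1,\dots,aH-1\}$ and all the $c_{j}$ are nonnegative, so for every $H$,
$$\frac{1}{H}\sum_{h=0}^{H-1}c_{ah}\le \frac{1}{H}\sum_{j=0}^{aH-1}c_{j}=a\cdot\frac{1}{aH}\sum_{j=0}^{aH-1}c_{j}.$$
Letting $H\to\infty$, the right-hand side tends to $aL=|a|\,\nnorm{f}_{k+1}^{2^{k+1}}$, being $a$ times a subsequence of the convergent averages $\frac1N\sum_{j=0}^{N-1}c_{j}$. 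This already yields the asserted inequality with $\limsup$ in place of $\lim$.

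To upgrade this to a genuine limit I would observe that $c_{ah}$ is a diagonal matrix coefficient of a unitary. Expanding the definition of $\nnorm{\cdot}_{k}$ and using that $f$ is real,
$$c_{ah}=\int_{X^{[k]}}\Big(\prod_{\underline\epsilon\in V_{k}}f(x_{\underline\epsilon})\Big)\Big(\prod_{\underline\epsilon\in V_{k}}f(T^{ah}x_{\underline\epsilon})\Big)\,d\mu^{[k]}(\underline x)=\int_{X^{[k]}}\Phi\cdot\big(\Phi\circ (T^{[k]})^{ah}\big)\,d\mu^{[k]},$$
where $\Phi:=\prod_{\underline\epsilon\in V_{k}}f(x_{\underline\epsilon})\in L^{\infty}(\mu^{[k]})$ is real-valued; hence $c_{ah}=\langle U^{ah}\Phi,\Phi\rangle_{L^{2}(\mu^{[k]})}$, where $U$ is the Koopman operator of $(X^{[k]},\B^{\otimes[k]},\mu^{[k]},T^{[k]})$. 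Applying von Neumann's mean ergodic theorem to the unitary $U^{a}$, the averages $\frac1H\sum_{h=0}^{H-1}U^{ah}\Phi$ converge in $L^{2}(\mu^{[k]})$, so $\frac1H\sum_{h=0}^{H-1}c_{ah}$ converges; combined with the previous paragraph this finishes the proof.

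The routine ingredients are the invariance $\nnorm{g\circ T}_{k}=\nnorm{g}_{k}$ and the expansion of $c_{ah}$, and the inequality itself is the trivial density comparison of the second paragraph. The only genuinely substantive decision is how to obtain convergence of the left-hand average, and I expect the matrix-coefficient reformulation plus the mean ergodic theorem (equivalently, positive-definiteness of $h\mapsto c_{h}$ and Ces\`aro convergence along arithmetic progressions) to be the cleanest route; everything else is elementary.
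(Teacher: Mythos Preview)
The paper does not prove this proposition; it merely cites it as \cite[Lemma 8]{L}, so there is no in-paper argument to compare against. Your proof is correct and is essentially the standard one: reduce to $a>0$ via the $T^{[k]}$-invariance of $\mu^{[k]}$, use nonnegativity of $c_{h}=\nnorm{f\cdot T^{h}f}_{k}^{2^{k}}$ to dominate the averaged sum along $a\Z_{\ge 0}$ by $a$ times the full Ces\`aro average, and invoke Theorem~\ref{thm1}(1). Your additional step establishing that the left-hand side is an actual limit (not just a $\limsup$) via the mean ergodic theorem on $(X^{[k]},\mu^{[k]},T^{[k]})$ is also correct and is a nice touch, since the statement as written asserts $\lim$ rather than $\limsup$.
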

	\begin{prop}\label{prop3}
		$($\cite[Proposition 5]{L}$)$For a system $(X,\B,\mu,T)$, $d\ge 1$, $f_{1},\cdots,f_{d}\in L^{\infty}(\mu)$ which take real value and are bounded by one and distinct $a_{1},\cdots,a_{d}\in \Z\backslash \{0\}$, there exists a constant $A(a_{1},\cdots,a_{d})>0$ such that $$\limsup_{N\rightarrow \infty}\norm{\frac{1}{N}\sum_{n=0}^{N-1}\prod_{i=1}^{d}T^{a_{i}n}f_{i}}_{2}\le A\min_{1\le i\le d}\nnorm {f_{i}}_{d+1}.$$
	\end{prop}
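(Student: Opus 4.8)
The plan is to prove the inequality by induction on $d$, using the van der Corput inequality to pass from $d$ functions to $d-1$ functions and using Proposition \ref{prop2} to account for the jump of the seminorm index from $d$ to $d+1$. First I would reduce to a single index. Since the asserted bound is against $\min_{1\le i\le d}\nnorm{f_i}_{d+1}$, it suffices to produce, for each fixed $j$, a constant $A_j>0$ with $\limsup_N\norm{\frac1N\sum_{n=0}^{N-1}\prod_{i=1}^d T^{a_in}f_i}_2\le A_j\nnorm{f_j}_{d+1}$; taking $A=\max_j A_j$ then gives the claim, because the single number $\limsup_N\norm{\cdots}_2$ is then $\le A\nnorm{f_j}_{d+1}$ for every $j$, hence $\le A\min_j\nnorm{f_j}_{d+1}$. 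By relabelling I may assume $j=d$.

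Write $u_n=\prod_{i=1}^d T^{a_in}f_i\in L^2(\mu)$, so $\norm{u_n}_2\le 1$. The van der Corput inequality (cf. \cite{HK-book}) gives
$$\limsup_N\norm{\frac1N\sum_{n=0}^{N-1}u_n}_2^2\le\limsup_H\frac1H\sum_{h=0}^{H-1}\limsup_N\left|\frac1N\sum_{n=0}^{N-1}\langle u_{n+h},u_n\rangle\right|.$$
Since each $f_i$ is real, $\langle u_{n+h},u_n\rangle=\int_X\prod_{i=1}^d T^{a_in}g_i^{h}\,d\mu$, where $g_i^{h}:=f_i\cdot T^{a_ih}f_i$ is again real-valued and bounded by one. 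Composing with $T^{-a_1n}$ inside the invariant integral and separating the now $n$-independent first factor, I obtain
$$\frac1N\sum_{n=0}^{N-1}\langle u_{n+h},u_n\rangle=\int_X g_1^{h}\cdot\Big(\frac1N\sum_{n=0}^{N-1}\prod_{i=2}^d T^{(a_i-a_1)n}g_i^{h}\Big)\,d\mu,$$
so by Cauchy--Schwarz and $\norm{g_1^{h}}_2\le 1$ its modulus is at most $\norm{\frac1N\sum_{n}\prod_{i=2}^d T^{(a_i-a_1)n}g_i^{h}}_2$. The integers $a_i-a_1$ ($2\le i\le d$) are distinct and nonzero, so this is a $(d-1)$-fold average of exactly the inductive form, and the induction hypothesis (with target index $d$) bounds its $\limsup_N$ by $A'\nnorm{g_d^{h}}_d$, where $A'=A(a_2-a_1,\dots,a_d-a_1)$ does not depend on $h$.

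It remains to average in $h$. By the power-mean inequality and then Proposition \ref{prop2} with $k=d$, $a=a_d$, $f=f_d$,
$$\limsup_H\frac1H\sum_{h=0}^{H-1}\nnorm{g_d^{h}}_d\le\limsup_H\Big(\frac1H\sum_{h=0}^{H-1}\nnorm{f_d\cdot T^{a_dh}f_d}_d^{2^d}\Big)^{1/2^d}\le\big(|a_d|\,\nnorm{f_d}_{d+1}^{2^{d+1}}\big)^{1/2^d}=|a_d|^{1/2^d}\nnorm{f_d}_{d+1}^2.$$
Combining the three displays yields $\limsup_N\norm{\frac1N\sum_n u_n}_2^2\le A'\,|a_d|^{1/2^d}\nnorm{f_d}_{d+1}^2$, which is the desired bound with $A_d=(A'|a_d|^{1/2^d})^{1/2}$. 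The base case $d=1$ is the same computation with the empty product: there $\langle u_{n+h},u_n\rangle=\int_X f_1\cdot T^{a_1h}f_1\,d\mu$ is independent of $n$ and is $\le\nnorm{f_1\cdot T^{a_1h}f_1}_1$, after which the same power-mean plus Proposition \ref{prop2} (with $k=1$) closes the induction.

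The \textbf{main obstacle} is organizing the induction so that the seminorm bookkeeping is correct: the van der Corput step naturally produces the level-$d$ seminorm $\nnorm{g_d^{h}}_d$ of the \emph{differenced} functions, and it is precisely Proposition \ref{prop2} that converts the $h$-average of these into the level-$(d+1)$ seminorm of $f_d$ itself, which is the source of the index $d+1$ in the statement. The remaining care is purely structural: verifying that the differenced iterates $a_i-a_1$ remain distinct and nonzero (so that the reduced average is genuinely of the inductive form) and that every intermediate function $g_i^{h}$ stays real-valued and bounded by one (so that both the induction hypothesis and Proposition \ref{prop2} continue to apply).
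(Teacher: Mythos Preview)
Your argument is correct. The paper does not supply its own proof of this proposition; it simply quotes it from \cite[Proposition~5]{L}, and the standard proof there is precisely the van der Corput plus induction scheme you outline (reduce to a single index, difference via van der Corput, apply the inductive $(d-1)$-term bound with the shifted exponents $a_i-a_1$, then use Proposition~\ref{prop2} to absorb the $h$-average and raise the seminorm level by one). So your approach matches the intended one.
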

	Next, we reproduce local seminorm of bounded sequences which was defined by Host and Kra when they generalized Wiener–Wintner Theorem to nilsequences in \cite{HKU}.
	\begin{defn}
	$($\cite[Definition 2.1]{HKU}$)$Let $k\in \N$ and $\textbf{a}=\{a_n\}_{n\in \Z}$ be a bounded sequence. We say that \emph{$\textbf{a}$ satisfies $\mathcal{P}(k)$} if for any $\underline{h}=(h_1,\cdots,h_k)\in \Z^{k}$, the limit $$\lim_{N\rightarrow \infty}\frac{1}{N}\sum_{n=0}^{N-1}\prod_{\underline{\epsilon} \in V_{k}}C^{|\underline{\epsilon}|}a_{n+\underline{h}\cdot \underline{\epsilon}}$$ exists where $\underline{h}\cdot \underline{\epsilon}=\sum_{i=1}^{k}\epsilon_{i}h_{i}$. We denote the limit by $c_{\underline{h}}(\textbf{a})$.
	\end{defn}
	By \cite[Proposition 2.2]{HKU}, we know the limit $$\lim_{H\rightarrow \infty}\frac{1}{H^{k}}\sum_{\underline{h}\in [H]^{k}}c_{\underline{h}}(\textbf{a})$$ exists and is non-negative. Then we can define \emph{local seminorm} $\nnorm{\textbf{a}}_{k}$ of $\textbf{a}$ by $$\nnorm{\textbf{a}}_{k}=\Big(\lim_{H\rightarrow \infty}\frac{1}{H^{k}}\sum_{\underline{h}\in [H]^{k}}c_{\underline{h}}(\textbf{a})\Big)^{\frac{1}{2^k}}.$$
	
	The follwoing results on local seminorm will be used in the proof.
	\begin{prop}\label{prop4}
		$($\cite[Corollary 2.14]{HKU}$)$Let $k\ge 2$, $\textbf{b}=\{b_n\}_{n\in\Z}$ be a $(k-1)$-step nilsequence and $\delta>0$. There exists $c(\textbf{b},\delta)>0$ such that for any bounded sequence $\textbf{a}=\{a_n\}_{n\in \Z}$ which satisfies $\mathcal{P}(k)$, then $$\limsup_{N\rightarrow \infty}\Big|\frac{1}{N}\sum_{n=0}^{N-1}a_{n}b_{n}\Big|\le c\nnorm{\textbf{a}}_{k}+\delta \norm{\textbf{a}}_{\infty}$$ where $ \norm{\textbf{a}} _{\infty}=\sup_{n\in\Z}|a_n|$.
	\end{prop}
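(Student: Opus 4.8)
The plan is to prove, by induction on $k$, the following \emph{uniform} strengthening $S(k)$: for every $(k-1)$-step nilmanifold $X=G/\Gamma$ and every $\delta>0$ there is a constant $c=c(X,\delta)>0$, depending only on $X$ and $\delta$ and \emph{not} on the particular data producing the nilsequence, such that whenever $b_n=\phi(g^n\cdot x)$ with $\phi\in C(X)$, $\norm{\phi}_\infty\le 1$, $g\in G$, $x\in X$, and $\textbf{a}$ satisfies $\mathcal{P}(k)$, one has $\limsup_{N}|\frac1N\sum_{n=0}^{N-1}a_nb_n|\le c\nnorm{\textbf{a}}_k+\delta\norm{\textbf{a}}_\infty$. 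The stated proposition then follows at once: a general $(k-1)$-step nilsequence is a uniform limit of basic ones, and replacing $\textbf{b}$ by a basic approximant within $\delta/2$ in the sup norm alters the average by at most $\frac{\delta}{2}\norm{\textbf{a}}_\infty$.

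First I would record the two facts that drive the induction. The local seminorms satisfy the recursion $\nnorm{\textbf{a}}_k^{2^k}=\lim_{H}\frac1H\sum_{h=0}^{H-1}\nnorm{\textbf{a}_h}_{k-1}^{2^{k-1}}$, where $\textbf{a}_h=\{\overline{a_n}a_{n+h}\}_{n}$; this is immediate from the definition of $c_{\underline h}$ by splitting off the first coordinate of $\underline h$, and the same computation shows that $\mathcal{P}(k)$ for $\textbf{a}$ forces $\mathcal{P}(k-1)$ for every $\textbf{a}_h$. Second, the van der Corput inequality applied to $u_n=a_nb_n$ gives $\limsup_N|\frac1N\sum_n u_n|^2\le\limsup_H\frac1H\sum_{h}\limsup_N|\frac1N\sum_n\overline{u_n}u_{n+h}|$, and $\overline{u_n}u_{n+h}=(\overline{a_n}a_{n+h})(\overline{b_n}b_{n+h})$ couples $\textbf{a}_h$ with the shifted product $\textbf{b}_h=\{\overline{b_n}b_{n+h}\}_n$.

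The inductive step runs as follows. After expanding $\phi$ in its vertical Fourier series with respect to the central subgroup $G_{k-1}$ and truncating (the tail contributes at most $\delta\norm{\textbf{a}}_\infty$, and the finitely many retained terms are summed by the triangle inequality), I may assume $\phi$ is a single vertical nilcharacter, i.e. $\phi(zu)=\chi(z)\phi(u)$ for $z\in G_{k-1}$ with $|\chi|=1$. If the central frequency is trivial, then $\phi$ descends to the $(k-2)$-step nilmanifold $X/G_{k-1}$, so $\textbf{b}$ is already a $(k-2)$-step nilsequence and the inductive hypothesis $S(k-1)$ together with the monotonicity $\nnorm{\textbf{a}}_{k-1}\le\nnorm{\textbf{a}}_k$ finishes the estimate. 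If the central frequency is non-trivial, I apply van der Corput: the function $(u,v)\mapsto\overline{\phi(u)}\phi(v)$ satisfies $\overline{\phi(zu)}\phi(zv)=\overline{\phi(u)}\phi(v)$ for $z\in G_{k-1}$, so it descends to $(X\times X)/G_{k-1}^{\Delta}$, a $(k-2)$-step nilmanifold independent of $h$; since $\overline{b_n}b_{n+h}$ is the value of this descended function along the $(g,g)$-orbit of $(x,g^h x)$, the $\textbf{b}_h$ are $(k-2)$-step nilsequences arising from one fixed nilmanifold, uniformly in $h$. Applying $S(k-1)$ with parameter $\delta^2$ to each correlation of $\textbf{a}_h$ with $\textbf{b}_h$, averaging over $h$, using $\norm{\textbf{a}_h}_\infty\le\norm{\textbf{a}}_\infty^2$, the power-mean inequality $\frac1H\sum_h\nnorm{\textbf{a}_h}_{k-1}\le(\frac1H\sum_h\nnorm{\textbf{a}_h}_{k-1}^{2^{k-1}})^{1/2^{k-1}}$, and the seminorm recursion, I obtain $\limsup_N|\frac1N\sum u_n|^2\le c'\nnorm{\textbf{a}}_k^2+\delta^2\norm{\textbf{a}}_\infty^2$; taking square roots closes the induction with $c=\sqrt{c'}$. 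The base case $k=1$ ($X$ a point, $\textbf{b}$ constant) reduces to the classical identity $\limsup_N|\frac1N\sum_{n<N}a_n|^2=\sigma(\{0\})=\nnorm{\textbf{a}}_1^2$, where $\sigma$ is the correlation measure of $\textbf{a}$, proved by a Fej\'er-kernel computation.

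The crux, and the step I expect to be the main obstacle, is the structural claim that differencing a vertical nilcharacter drops the step by one \emph{uniformly in $h$}: that $(u,v)\mapsto\overline{\phi(u)}\phi(v)$ descends along the diagonal central subgroup to a function on a single fixed $(k-2)$-step nilmanifold, so that one constant $c'=c'((X\times X)/G_{k-1}^\Delta,\delta^2)$ governs all $\textbf{b}_h$ at once. Matching this uniformity, one reduced nilmanifold valid for every $h$, against the uniform-in-$X$ form of the inductive hypothesis is precisely what legitimizes the averaging over $h$; verifying this descent and the negligibility of the truncated vertical tail are the two points where the nilpotent structure theory is genuinely used.
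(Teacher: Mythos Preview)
The paper does not prove this proposition at all; it is quoted verbatim from \cite[Corollary~2.14]{HKU} and used as a black box. Your sketch is essentially the standard Host--Kra argument (vertical Fourier decomposition on the nilmanifold, then van der Corput to drop the step), so in spirit it matches the original source rather than anything in the present paper.

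Two technical points nevertheless deserve care. First, in the trivial-character branch you invoke $S(k-1)$ directly on $\textbf{a}$ together with the monotonicity $\nnorm{\textbf{a}}_{k-1}\le\nnorm{\textbf{a}}_{k}$. But property $\mathcal{P}(k)$ does \emph{not} imply $\mathcal{P}(k-1)$: setting one coordinate of $\underline{h}$ to zero collapses the $k$-cube to $\prod_{\underline{\epsilon}'\in V_{k-1}}|a_{n+\underline{h}'\cdot\underline{\epsilon}'}|^{2}$, not to a $(k-1)$-cube, so $\nnorm{\textbf{a}}_{k-1}$ need not even be defined and $S(k-1)$ cannot be applied to $\textbf{a}$ itself. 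The fix is painless: run the van der Corput step in this case as well. For a trivial vertical character the function $(u,v)\mapsto\overline{\phi(u)}\phi(v)$ is still $G_{k-1}^{\Delta}$-invariant and descends to the same $(k-2)$-step quotient, so the non-trivial-character argument covers this case too; you then only ever apply $S(k-1)$ to the differenced sequences $\textbf{a}_{h}$, which genuinely satisfy $\mathcal{P}(k-1)$.

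Second, the base-case ``identity'' $\limsup_{N}\big|\tfrac{1}{N}\sum_{n<N}a_{n}\big|^{2}=\sigma(\{0\})$ is in general only the inequality $\le$: take $a_{n}=(-1)^{k}$ on dyadic blocks $[2^{k},2^{k+1})$; then every correlation $c_{h}$ equals $1$, hence $\sigma=\delta_{0}$, while the Ces\`aro averages oscillate. The inequality (which follows from van der Corput plus the Fej\'er-kernel identity $\tfrac{1}{H}\sum_{|h|<H}(1-|h|/H)c_{h}\to\sigma(\{0\})$) is all you need.
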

	\begin{prop}\label{prop5}
		$($\cite[Corollary 5.10]{HKU}$)$Let $k\ge 2$ and $\textbf{a}=\{a_n\}_{n\in \Z}$ be a bounded sequence. Assume that for any $\delta>0$, there exists a $(k-1)$-step nilsequence $\textbf{r}=\{r_n\}_{n\in\Z}$ such that $\textbf{a}-\textbf{r}$ satisfies $\mathcal{P}(k)$ and $\nnorm{\textbf{a}-\textbf{r}}_{k}<\delta$. Then for any $(k-1)$-step nilsequence $\textbf{b}=\{b_n\}_{n\in\Z}$, the limit $$\lim_{N\rightarrow \infty}\frac{1}{N}\sum_{n=0}^{N-1}a_{n}b_{n}$$ exists.
	\end{prop}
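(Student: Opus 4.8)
The plan is to deduce convergence of $\frac1N\sum_{n=0}^{N-1}a_nb_n$ from a Cauchy criterion, playing the given approximation hypothesis against Proposition \ref{prop4}. Two standard facts about nilsequences will be used, both from \cite[Chapter 11]{HK-book}: first, the pointwise product of two $(k-1)$-step nilsequences is again a $(k-1)$-step nilsequence --- on the level of basic nilsequences this is because the product of two $(k-1)$-step nilsystems equipped with the product translation is a $(k-1)$-step nilsystem, and the general case follows since uniform limits survive multiplication by a bounded sequence; second, for any nilsequence $\textbf{c}=\{c_n\}_{n\in\Z}$ the Cesàro averages $\frac1N\sum_{n=0}^{N-1}c_n$ converge as $N\to\infty$ (the orbit closure of a point under a nilrotation is a subnilmanifold on which the rotation is uniquely ergodic, and one passes from basic to arbitrary nilsequences by an $\varepsilon$-approximation). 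I will also use that one may take the approximating nilsequences $\textbf{r}=\textbf{r}_\delta$ bounded, say $\|\textbf{r}_\delta\|_\infty\le\|\textbf{a}\|_\infty$, so that $M:=\sup_{\delta>0}\|\textbf{a}-\textbf{r}_\delta\|_\infty\le 2\|\textbf{a}\|_\infty<\infty$.

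Fix a $(k-1)$-step nilsequence $\textbf{b}$. For $\delta>0$ pick $\textbf{r}_\delta$ as in the hypothesis and split
$$\frac1N\sum_{n=0}^{N-1}a_nb_n=\frac1N\sum_{n=0}^{N-1}(a_n-r_{\delta,n})b_n+\frac1N\sum_{n=0}^{N-1}r_{\delta,n}b_n.$$
By the first fact, $\textbf{r}_\delta\textbf{b}=\{r_{\delta,n}b_n\}_n$ is a $(k-1)$-step nilsequence, so by the second fact the second average converges as $N\to\infty$, hence is Cauchy in $N$. For the first average I would apply Proposition \ref{prop4} to the sequence $\textbf{a}-\textbf{r}_\delta$, which satisfies $\mathcal{P}(k)$ by hypothesis, and to the $(k-1)$-step nilsequence $\textbf{b}$: for every $\delta'>0$,
$$\limsup_{N\to\infty}\Big|\frac1N\sum_{n=0}^{N-1}(a_n-r_{\delta,n})b_n\Big|\le c(\textbf{b},\delta')\,\nnorm{\textbf{a}-\textbf{r}_\delta}_k+\delta'\|\textbf{a}-\textbf{r}_\delta\|_\infty<c(\textbf{b},\delta')\,\delta+\delta' M.$$
Now, given $\varepsilon>0$, I would first choose $\delta'>0$ with $\delta'M<\varepsilon/6$, and then, $c(\textbf{b},\delta')$ having become a fixed finite constant, choose $\delta>0$ with $c(\textbf{b},\delta')\,\delta<\varepsilon/6$; this forces $\limsup_N\big|\frac1N\sum_{n=0}^{N-1}(a_n-r_{\delta,n})b_n\big|<\varepsilon/3$. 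Combining this with the Cauchy-ness of the second average, the triangle inequality yields $\big|\frac1N\sum_{n=0}^{N-1}a_nb_n-\frac1M\sum_{n=0}^{M-1}a_nb_n\big|<\varepsilon$ for all large $N,M$, so $\{\frac1N\sum_{n=0}^{N-1}a_nb_n\}_N$ is Cauchy and hence convergent.

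The one delicate point --- the thing to get right --- is the order of the limits: the parameter $\delta'$ in Proposition \ref{prop4} must be fixed before $\delta$ is sent to $0$, because $c(\textbf{b},\delta')$ is not uniform in $\delta'$, and this is precisely why the uniform sup-norm bound on $\textbf{a}-\textbf{r}_\delta$ is needed in the estimate above. In the way such approximating nilsequences arise --- as conditional expectations onto finite-step nilfactors followed by a continuous approximation --- this bound is automatic, and I would record it explicitly so that the limiting argument is legitimate. Apart from this bookkeeping, the proof is a direct assembly of Proposition \ref{prop4} with the two elementary properties of nilsequences recalled above.
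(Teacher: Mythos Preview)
The paper does not prove Proposition~\ref{prop5}; it is quoted from \cite[Corollary~5.10]{HKU}, so there is no in-paper argument to compare against. Your strategy---split off a nilsequence part whose averages converge, and control the remainder via Proposition~\ref{prop4}---is the standard one and is essentially how the result is proved.

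That said, the point you yourself flag as ``delicate'' is an actual gap in the write-up as it stands. The hypothesis gives, for each $\delta>0$, \emph{some} $(k-1)$-step nilsequence $\textbf{r}_\delta$ with $\textbf{a}-\textbf{r}_\delta\in\mathcal{P}(k)$ and $\nnorm{\textbf{a}-\textbf{r}_\delta}_k<\delta$; it says nothing about $\|\textbf{r}_\delta\|_\infty$. Your assertion that one ``may take'' $\|\textbf{r}_\delta\|_\infty\le\|\textbf{a}\|_\infty$ is not part of the hypothesis, and without a uniform bound $M=\sup_\delta\|\textbf{a}-\textbf{r}_\delta\|_\infty<\infty$ the two-parameter choice (fix $\delta'$ first to make $\delta'M$ small, then shrink $\delta$) collapses, since $c(\textbf{b},\delta')$ may blow up as $\delta'\to 0$. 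Saying the bound is ``automatic in applications'' is true for how the proposition is used in this paper---in Step~2 of the proof of Theorem~\ref{A} the approximants come from \cite[Theorem~16.10]{HK-book} with an explicit $L^\infty$ control---but it does not establish the proposition as stated.

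To close the gap you can either (i) record the uniform $\ell^\infty$ bound as an additional hypothesis, which suffices for every use in the paper, or (ii) argue that $\textbf{r}_\delta$ may be replaced by its truncation $\tilde{\textbf{r}}_\delta$ to $[-\|\textbf{a}\|_\infty,\|\textbf{a}\|_\infty]$. For (ii), the truncation is still a $(k-1)$-step nilsequence (the class is a uniformly closed algebra, hence closed under composition with continuous real functions by Stone--Weierstrass), but you must also check that $\textbf{a}-\tilde{\textbf{r}}_\delta=(\textbf{a}-\textbf{r}_\delta)+(\textbf{r}_\delta-\tilde{\textbf{r}}_\delta)$ again satisfies $\mathcal{P}(k)$ with controlled $k$-seminorm; this needs that adding a $(k-1)$-step nilsequence to a $\mathcal{P}(k)$-sequence preserves $\mathcal{P}(k)$, which is not formal (it involves convergence of mixed correlations) and is part of the machinery in \cite{HKU} that you should cite explicitly. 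A minor point: you use $M$ both for the sup-norm bound and as an averaging index in the final Cauchy estimate; rename one of them.
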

	\subsection{Furstenberg systems for sequences}
	Here, we recall the notion of the Furstenberg systems for bounded real sequences$($For details, see \cite{HN2018}$)$.
	\begin{defn}
		Let $\{N_{k}\}_{k\ge 1}$ be a strictly increasing sequence of positive integers and $I$ be a closed bounded interval on $\R$. We say that a sequence $\textbf{z}=\{z_n\}_{n\in \Z}$ which takes value on $I$ admits a correlation on sequence $\{N_{k}\}_{k\ge 1}$ if the limit $$\lim_{k\rightarrow \infty}\frac{1}{N_k}\sum_{n=1}^{N_k}\prod_{j=1}^{s}z_{n+n_j}$$ exists for any $s\in\N$ and any $(n_1,\cdots,n_s)\in \Z^{s}$.
	\end{defn}
	Let $\Omega=I^{\Z}$. The element of $\Omega$ can be written $w=(w(n))_{n\in\Z}$. So sequence $\textbf{z}$ can be viewed as an element of $\Omega$. The shift $\sigma$ on $\Omega$ is defined by $(\sigma(w))(n)=w(n+1)$ for any $n\in\Z$. If $\textbf{z}$ admits a correlation on sequence $\{N_{k}\}_{k\ge 1}$, then the limit $$\lim_{k\rightarrow \infty}\frac{1}{N_k}\sum_{n=1}^{N_k}\delta_{\sigma^{n}\textbf{z}}$$ exists in the weak-$*$ topology. We denote the limit by $\nu$.
	
		We say that system $(\Omega,\mathcal{F},\nu,\sigma)$ is the \emph{Furstenberg system} associated with $\textbf{z}$ on sequence $\{N_{k}\}_{k\ge 1}$ where $\mathcal{F}$ is the Borel $\sigma$-algebra of $\Omega$.
		
	Let $F_{0}$ the $0$-th coordinate projection of $\Omega$. That is, $F_{0}:\Omega\rightarrow I,w\mapsto w(0)$. Clearly, $$\lim_{k\rightarrow \infty}\frac{1}{N_k}\sum_{n=1}^{N_k}\prod_{j=1}^{s}z_{n+n_j}=\int \prod_{j=1}^{s}\sigma^{n_j}F_{0}d\nu$$ for any $s\in\N$ and any $(n_1,\cdots,n_s)\in \Z^{s}$.
	\subsection{Van der Corput's lemma} 
	\begin{lemma}\label{lem1}
		$($\cite[Lemma 1.3.1]{KN}$)$Let $\{u_n\}_{n\in \Z}$ be a bounded complex sequence. For any $N,H\ge 1$ with $H\le N$, we have $$H^{2}\Big|\sum_{n=1}^{N}u_{n}\Big|^{2}\le H(N+H-1)\sum_{n=1}^{N}|u_{n}|^{2}+2(N+H-1)\sum_{h=1}^{H-1}(H-h)\Re \sum_{n=1}^{N}u_{n}\overline{u}_{n+h}.$$
	\end{lemma}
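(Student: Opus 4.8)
The plan is to prove this purely as an identity-plus-inequality for finite sums: no measure-theoretic or ergodic input is needed, and the whole argument rests on a smoothing trick followed by the Cauchy--Schwarz inequality. First I would extend the sequence by setting $u_n=0$ for every integer $n$ with $n<1$ or $n>N$, so that sums in $n$ may be treated as sums over all of $\Z$ with no boundary corrections to track.

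The first substantive step is to rewrite the target sum as a sum of shifted blocks. Since for each fixed $h$ with $0\le h\le H-1$ the reindexing $n\mapsto n+h$ leaves the (now bi-infinite) sum unchanged, one has
$$H\sum_{n=1}^{N}u_{n}=\sum_{h=0}^{H-1}\sum_{m\in\Z}u_{m-h}=\sum_{m\in\Z}\sum_{h=0}^{H-1}u_{m-h}.$$
The inner sum vanishes unless $1\le m-h\le N$ for some admissible $h$, i.e. unless $1\le m\le N+H-1$, so the outer sum runs over exactly $N+H-1$ values of $m$. Applying Cauchy--Schwarz to this outer sum then yields
$$H^{2}\Big|\sum_{n=1}^{N}u_{n}\Big|^{2}\le(N+H-1)\sum_{m=1}^{N+H-1}\Big|\sum_{h=0}^{H-1}u_{m-h}\Big|^{2}.$$

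It remains to expand the inner square and reorganize. Writing $\big|\sum_{h}u_{m-h}\big|^{2}=\sum_{h,h'}u_{m-h}\overline{u_{m-h'}}$ and summing over $m$, the substitution $n=m-h$ turns each $(h,h')$-term into $\sum_{n}u_{n}\overline{u_{n+h-h'}}$, which depends on the pair only through the difference $h-h'$. The diagonal $h=h'$ occurs $H$ times and contributes $H\sum_{n=1}^{N}|u_{n}|^{2}$; for each fixed $e$ with $1\le e\le H-1$ the pairs with $h-h'=e$ and those with $h-h'=-e$ each occur $H-e$ times, and since $\sum_{n}u_{n}\overline{u_{n-e}}=\overline{\sum_{n}u_{n}\overline{u_{n+e}}}$ the two families combine into $2(H-e)\,\Re\sum_{n}u_{n}\overline{u_{n+e}}$. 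Substituting the resulting identity for $\sum_{m}\big|\sum_{h}u_{m-h}\big|^{2}$ into the Cauchy--Schwarz bound reproduces exactly the claimed inequality.

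There is no genuine analytic difficulty here; the only points demanding care are the two pieces of bookkeeping, namely that the shifted outer sum has precisely $N+H-1$ nonzero terms and that for each $e$ there are exactly $H-e$ index pairs $(h,h')$ with $|h-h'|=e$. The conjugate-pairing of the $+e$ and $-e$ contributions, which is what produces the real part $\Re$ and the symmetric final form, is the main (modest) obstacle and the place where the bound must be checked most attentively.
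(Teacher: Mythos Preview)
Your argument is correct and is precisely the classical proof: the smoothing identity $H\sum_n u_n=\sum_m\sum_{h=0}^{H-1}u_{m-h}$, Cauchy--Schwarz over the $N+H-1$ active values of $m$, and the diagonal/off-diagonal decomposition of the resulting square. The paper does not supply a proof of this lemma at all; it is simply quoted from Kuipers--Niederreiter, where exactly this argument appears, so there is nothing further to compare.

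One minor bookkeeping remark: your zero-extension forces $u_{n+h}=0$ whenever $n+h>N$, so the correlation sum your expansion actually produces is $\sum_{n=1}^{N-h}u_n\overline{u}_{n+h}$ rather than the $\sum_{n=1}^{N}$ written in the paper's display. That truncated form is the one stated in Kuipers--Niederreiter, and the discrepancy is immaterial in every use made of the lemma here (one always divides by $N$ and sends $N\to\infty$), but it is worth noting that the inequality with the literal upper limit $N$ and a genuinely arbitrary bi-infinite bounded sequence can fail; your zero-extension is not merely a convenience but the correct normalization.
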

	\section{Two lemmas}
	Before the statements of two lemmas, we introdue some notations. By $A\lesssim_{c_1,\cdots,c_d}B$, it means that there exists an implicit constant $C>0$ depending on $c_1,\cdots,c_d$ such that $A\le CB$. Let $X,Y$ be two non-empty sets and  $f:X\rightarrow \C,g:Y\rightarrow \C$ be two functions. By $f\otimes g$, it means a function $f\otimes g:X\times Y\rightarrow \C,(x,y)\mapsto f(x)g(y)$.
	\subsection{An estimation for cubic averages}
	Let $k\in \N$ and $V_{k}^{*}=V_{k}\backslash \{(0,\cdots,0)\}$. Define $\phi:V_{k}^{*}\rightarrow \{1,2,\cdots,2^{k}-1\},\underline{\epsilon}=(\epsilon_1,\cdots,\epsilon_k)\mapsto \sum_{i=1}^{k}\epsilon_{i}\cdot 2^{i-1}$. For any $1\le i\le k$, let $V_{k,i}^{*}=\{\underline{\epsilon}\in V_{k}^{*}:\epsilon_{i}=0\}$. For any $1\le i,j\le k$ with $i\neq j$, let $A^{j}_{i}=V_{k,i}^{*}\backslash V_{k,j}^{*}$.
	
	Let $\{a_{j,n}\}_{1\le j\le 2^{k}-1,n\ge 0}$ be a collection of bounded real sequences to be bounded by one. Let $N\in\N$. Define \emph{$k$-step cubic averages} $$C(N;a_{1,n},\cdots,a_{2^{k}-1,n})=\frac{1}{N^{k}}\sum_{\underline{h}\in [N]^{k}}\prod_{\underline{\epsilon}\in V_{k}^{*}}a_{\phi(\underline{\epsilon}),\underline{\epsilon}\cdot \underline{h}}.$$For example, when $k=3$,  $$C(N;a_{1,n},\cdots,a_{7,n})=\frac{1}{N^{3}}\sum_{\underline{h}\in [N]^{3}}a_{1,h_{1}}a_{2,h_{2}}a_{3,h_{1}+h_{2}}a_{4,h_{3}}a_{5,h_{3}+h_{1}}a_{6,h_{3}+h_{2}}a_{7,h_{3}+h_{2}+h_{1}}.$$
	
	When $k=2$, Assani built the following estimation.
	\begin{prop}\label{prop6}
		$($\cite[Lemma 5]{A}$)$Let $\{a_n\}_{n\in\Z},\{b_n\}_{n\in\Z},\{c_n\}_{n\in\Z}$ be three bound real sequences to be bounded by one. For any $N\in\N$, we have
		$$\Big|\frac{1}{N^{2}}\sum_{n,m=0}^{N-1}a_{n}b_{m}c_{n+m}\Big|^{2}\le \frac{1}{N}\sum_{n=0}^{N-1}\Big|\frac{1}{N}\sum_{m=0}^{N-1}b_{m}c_{n+m}\Big|^{2}\le \sup_{t}\Big|\frac{1}{N}\sum_{m=0}^{2(N-1)}e^{2\pi imt}c_{m}\Big|^{2}.$$
	\end{prop}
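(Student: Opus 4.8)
My plan is to prove the two inequalities separately: the first is immediate from Cauchy--Schwarz, and the second is a short Fourier-analytic computation.

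For the first inequality I would set $d_{n}=\frac{1}{N}\sum_{m=0}^{N-1}b_{m}c_{n+m}$, so that $\frac{1}{N^{2}}\sum_{n,m=0}^{N-1}a_{n}b_{m}c_{n+m}=\frac{1}{N}\sum_{n=0}^{N-1}a_{n}d_{n}$. Applying Cauchy--Schwarz in the variable $n$ and using $|a_{n}|\le 1$ then gives $\big|\frac{1}{N}\sum_{n=0}^{N-1}a_{n}d_{n}\big|^{2}\le\big(\frac{1}{N}\sum_{n=0}^{N-1}|a_{n}|^{2}\big)\big(\frac{1}{N}\sum_{n=0}^{N-1}|d_{n}|^{2}\big)\le\frac{1}{N}\sum_{n=0}^{N-1}|d_{n}|^{2}$, which is exactly the middle term.

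For the second inequality the key idea is to read $d_{n}$ as a Fourier coefficient. Since $0\le n+m\le 2(N-1)$ whenever $0\le n,m\le N-1$, I would write $c_{n+m}=\int_{0}^{1}P(t)e^{-2\pi i(n+m)t}\,dt$ with $P(t)=\sum_{j=0}^{2(N-1)}c_{j}e^{2\pi ijt}$, and collect the $m$-sum into $B(t)=\frac{1}{N}\sum_{m=0}^{N-1}b_{m}e^{-2\pi imt}$; this identifies $d_{n}$ with the $n$-th Fourier coefficient of $F:=P\cdot B$. I would then bound $\frac{1}{N}\sum_{n=0}^{N-1}|d_{n}|^{2}\le\frac{1}{N}\sum_{n\in\Z}|\widehat{F}(n)|^{2}=\frac{1}{N}\int_{0}^{1}|F(t)|^{2}\,dt$ by Parseval, estimate $|F(t)|^{2}\le\norm{P}_{\infty}^{2}|B(t)|^{2}$, and apply Parseval once more to $B$ together with $|b_{m}|\le 1$ to obtain $\int_{0}^{1}|F(t)|^{2}\,dt\le\norm{P}_{\infty}^{2}\cdot\frac{1}{N^{2}}\sum_{m=0}^{N-1}|b_{m}|^{2}\le\frac{1}{N}\norm{P}_{\infty}^{2}$. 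Combining the two bounds yields $\frac{1}{N}\sum_{n=0}^{N-1}|d_{n}|^{2}\le\frac{1}{N^{2}}\norm{P}_{\infty}^{2}=\sup_{t}\big|\frac{1}{N}\sum_{m=0}^{2(N-1)}e^{2\pi imt}c_{m}\big|^{2}$, which is the right-hand side.

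The whole argument is short and mostly bookkeeping; the one step that needs an idea rather than routine manipulation is the middle inequality in the second part, namely the observation that truncating $\sum_{n\in\Z}|\widehat{F}(n)|^{2}$ to the window $0\le n\le N-1$ can only decrease it, which lets Parseval replace the average of $|d_{n}|^{2}$ by $\norm{F}_{L^{2}}^{2}$ and hence by $\norm{P}_{\infty}^{2}$. I do not anticipate any genuine obstacle beyond keeping the $\frac{1}{N}$ normalizations consistent and checking the index bound $n+m\le 2(N-1)$, which is precisely what makes $P$ a trigonometric polynomial of degree $2(N-1)$ and fixes the upper summation limit in the final supremum. (Note that the hypothesis $|c_{m}|\le 1$ is not actually used in either inequality.)
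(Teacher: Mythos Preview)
Your proof is correct. The paper does not give its own proof of this proposition---it is quoted from Assani \cite{A}---but your argument is precisely the standard one, and it is the same Fourier/Parseval mechanism the paper itself uses when proving the generalization in Lemma~\ref{lem2} (Jensen/Cauchy--Schwarz for the first inequality, then rewrite the inner sum as a Fourier coefficient, apply Parseval, and pull out a supremum).
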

	Actually, Assani also gave an estimation for $3$-step cubic averages in \cite[Lemma 6]{A} and pointed out that one can build similar result for usual case. Here, we give the specific statements of general case and provide a proof for the completeness. The essence of the following is to let one index vanish.
	\begin{lemma}\label{lem2}
		Given $k,N\in \N$ with $k> 2$, let $\{a_{j,n}\}_{1\le j\le 2^{k}-1,n\in\Z}$ be a collection of bounded real sequences to be bounded by one. Then $$|C(N;a_{1,n},\cdots,a_{2^{k}-1,n})|^{2}\le \frac{C}{N^{k}}\sum_{h_{1},\cdots,h_{k-2}=0}^{N-1}\sup_{t}\Big|\sum_{h_{k}=0}^{N-1}e^{2\pi ih_{k}t}\prod_{\underline{\epsilon}\in A^{k}_{k-1} }a_{\phi(\underline{\epsilon}),\epsilon_{k}h_{k}+\sum_{i=1}^{k-2}\epsilon_{i}h_{i}}\Big|^{2}$$ where $C$ is an absolutely constant which can take value 2.
	\end{lemma}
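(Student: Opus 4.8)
The plan is to isolate the index $h_{k-1}$ and make it disappear using a Cauchy--Schwarz / van der Corput style argument applied to the averaging over $h_{k-1}$, exactly as Assani does in the $k=3$ case. Write $C(N;a_{1,n},\dots,a_{2^k-1,n})$ by grouping the product $\prod_{\underline\epsilon\in V_k^*}a_{\phi(\underline\epsilon),\underline\epsilon\cdot\underline h}$ according to whether the coordinate $\epsilon_{k-1}$ is $0$ or $1$. The factors with $\epsilon_{k-1}=0$ are exactly those indexed by $\underline\epsilon\in V_{k,k-1}^*$ and do not depend on $h_{k-1}$; the factors with $\epsilon_{k-1}=1$ depend on $h_{k-1}$ linearly through $\underline\epsilon\cdot\underline h$. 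So, writing $\underline h'=(h_1,\dots,h_{k-2},h_k)$ for the other indices, we get
\begin{equation}\label{eq:split}
C(N;a_{1,n},\dots)=\frac{1}{N^{k-1}}\sum_{\underline h'\in[N]^{k-1}}\Big(\prod_{\underline\epsilon\in V_{k,k-1}^*}a_{\phi(\underline\epsilon),\underline\epsilon\cdot\underline h'}\Big)\cdot\frac{1}{N}\sum_{h_{k-1}=0}^{N-1}\prod_{\underline\epsilon\in V_k^*\setminus V_{k,k-1}^*}a_{\phi(\underline\epsilon),\,h_{k-1}+\underline\epsilon\cdot\underline h'},
\end{equation}
where I have used $\epsilon_{k-1}=1$ in the second product so that $\underline\epsilon\cdot\underline h = h_{k-1}+\sum_{i\ne k-1}\epsilon_i h_i$ and the remaining part $\underline\epsilon\cdot\underline h'$ ranges over the other coordinates.

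Next I would apply Cauchy--Schwarz in the variables $\underline h'$, pulling the bounded-by-one outer product $\prod_{\underline\epsilon\in V_{k,k-1}^*}a_{\phi(\underline\epsilon),\underline\epsilon\cdot\underline h'}$ out at the cost of squaring, to obtain
\[
|C(N;a_{1,n},\dots)|^2\le \frac{1}{N^{k-1}}\sum_{\underline h'\in[N]^{k-1}}\Big|\frac{1}{N}\sum_{h_{k-1}=0}^{N-1}\prod_{\underline\epsilon\in V_k^*\setminus V_{k,k-1}^*}a_{\phi(\underline\epsilon),\,h_{k-1}+\underline\epsilon\cdot\underline h'}\Big|^2.
\]
Now I want to replace the inner average by the sup over $t$ of an exponential sum. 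Two of the factors in the product $\prod_{\underline\epsilon\in V_k^*\setminus V_{k,k-1}^*}$ are special: taking $\underline\epsilon$ to be the vector with $\epsilon_{k-1}=1$ and all other coordinates $0$ gives a term $a_{\phi,\,h_{k-1}}$ that is a function of $h_{k-1}$ alone (no $\underline h'$ dependence), and more generally I will split $V_k^*\setminus V_{k,k-1}^*$ further according to the value of $\epsilon_k$. Writing the inner sum as $\frac1N\sum_{h_{k-1}} b_{h_{k-1}}$ where $b_m$ collects all factors, the point is to bound $|\frac1N\sum_m b_m|^2$ by $\sup_t|\frac1N\sum_m e^{2\pi i m t}(\text{the }h_{k-1}\text{-only factors})|^2$ times the product of $\sup$-norms of the rest — but since everything is bounded by one this is cleaner done by first expanding the square, re-indexing to isolate the $e^{2\pi i\cdot}$ phase via the identity already exploited in Proposition \ref{prop6}, and then summing the leftover $h_{k-1}$-independent pieces back over $\underline h'$. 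Relabelling $h_{k-1}\leftrightarrow h_k$ at the end (a harmless permutation of dummy indices, which is why the statement has $h_k$ and the set $A_{k-1}^k=V_{k,k-1}^*\setminus V_{k,k}^*$) produces precisely the claimed inequality, with the outer sum running over $h_1,\dots,h_{k-2}$ and constant $C=2$ coming from the real-part/van der Corput bookkeeping.

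The main obstacle, and the step requiring the most care, is the bookkeeping of which monomials $a_{\phi(\underline\epsilon),\,\cdot}$ survive after the Cauchy--Schwarz step and get absorbed into the exponential-sum factor versus which are thrown away by the bounded-by-one hypothesis: one must check that the surviving index set is exactly $A_{k-1}^k = V_{k,k-1}^*\setminus V_{k,k}^*$ and that the arguments of the surviving factors are exactly $\epsilon_k h_k+\sum_{i=1}^{k-2}\epsilon_i h_i$ as written. This is a purely combinatorial verification about the cube $V_k$, parallel to Assani's explicit $k=3$ computation in \cite[Lemma 6]{A}; I would do it by first handling $k=3$ to fix notation and then noting the inductive/structural step is identical. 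The only genuinely analytic input is the elementary passage from an average to a supremum of exponential sums, which is the content of Proposition \ref{prop6} and needs no new idea.
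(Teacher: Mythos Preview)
Your overall plan---Cauchy--Schwarz to isolate one of the last two indices, then a Fourier/Parseval trick \`a la Proposition~\ref{prop6} to eliminate the other---is exactly the paper's approach. However, you have the roles of $h_{k-1}$ and $h_k$ reversed, and the ``harmless relabeling at the end'' does not repair this as cleanly as you claim.

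The paper applies Cauchy--Schwarz (Jensen) to isolate $h_k$ first, pulling out all factors with $\epsilon_k=0$ and leaving
\[
\frac{1}{N^{k-1}}\sum_{h_1,\dots,h_{k-1}}\Bigl|\frac1N\sum_{h_k}\prod_{\underline\epsilon\in V_k^*\setminus V_{k,k}^*}a_{\phi(\underline\epsilon),\sum_i\epsilon_ih_i}\Bigr|^2.
\]
The inner product splits as $P(h_k)\,Q(h_k+h_{k-1})$, where $P$ collects the factors with $\epsilon_{k-1}=0$ (indexed precisely by $A_{k-1}^k$) and $Q$ those with $\epsilon_{k-1}=1$. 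One then writes $\frac1N\sum_{h_k}P(h_k)Q(h_k+h_{k-1})$ as
\[
\int_{\mathbb{T}}\Bigl(\frac1N\sum_{h_k=0}^{N-1}e^{-2\pi ih_kt}P(h_k)\Bigr)\Bigl(\sum_{m=0}^{2(N-1)}e^{2\pi imt}Q(m)\Bigr)e^{-2\pi ih_{k-1}t}\,dt,
\]
applies Parseval in $h_{k-1}$, and bounds the $Q$-factor trivially via $\int_{\mathbb{T}}\bigl|\sum_m e^{2\pi imt}Q(m)\bigr|^2dt=\sum_m|Q(m)|^2\le 2N-1$. That doubled range of the convolution variable $m$ is the source of the constant $C=2$, not any ``real-part/van der Corput bookkeeping'' as you suggest. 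If instead you isolate $h_{k-1}$ first as in your sketch, the analogous step leaves a factor indexed by $A_k^{k-1}=\{\epsilon_{k-1}=1,\epsilon_k=0\}$ (or by $\{\epsilon_{k-1}=\epsilon_k=1\}$, depending on which piece you keep inside the $\sup_t$), not by $A_{k-1}^k$; simply renaming the dummy $h_{k-1}\to h_k$ does not convert one index set into the other, because the sequences $a_j$ are tied to specific $\underline\epsilon$ through $\phi$. The symmetry you want does exist, but invoking it correctly means swapping the roles of $h_{k-1}$ and $h_k$ throughout the entire argument (including the $\phi$-labeling), which is exactly what the paper does from the outset.
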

	\begin{proof}
			\begin{align*}
		&|C(N;a_{1,n},\cdots,a_{2^{k}-1,n})|^{2}
		\\ &\le \frac{1}{N^{k-1}}\sum_{h_{1},\cdots,h_{k-1}=0}^{N-1}\Big|\frac{1}{N}\sum_{h_{k}=0}^{N-1}\prod_{\underline{\epsilon}\in V_{k}^{*}\backslash V_{k,k}^{*} }a_{\phi(\underline{\epsilon}),\sum_{i=1}^{k}\epsilon_{i}h_{i}}\Big|^{2}
		\\ &=
		\frac{1}{N^{k-1}}\sum_{h_{1},\cdots,h_{k-1}=0}^{N-1}\Big|\int_{\mathbb{T}}\Big(\frac{1}{N}\sum_{h_{k}=0}^{N-1}e^{-2\pi ih_{k}t}\prod_{\underline{\epsilon}\in A^{k}_{k-1} }a_{\phi(\underline{\epsilon}),\epsilon_{k}h_{k}+\sum_{i=1}^{k-2}\epsilon_{i}h_{i}}\Big)\cdot\\&\ \ \ \Big(\sum_{m=0}^{2(N-1)}e^{2\pi imt}\prod_{\underline{\epsilon}\in V_{k}^{*}\backslash (V_{k,k-1}^{*}\cup V_{k,k}^{*}) }a_{\phi(\underline{\epsilon}),m+\sum_{i=1}^{k-2}\epsilon_{i}h_{i}}\Big)e^{-2\pi ih_{k-1}t}dt\Big|^{2}\\&\le
		\frac{1}{N^{k-1}}\sum_{h_{1},\cdots,h_{k-2}=0}^{N-1}\int_{\mathbb{T}}\Big|\Big(\frac{1}{N}\sum_{h_{k}=0}^{N-1}e^{-2\pi ih_{k}t}\prod_{\underline{\epsilon}\in A^{k}_{k-1} }a_{\phi(\underline{\epsilon}),\epsilon_{k}h_{k}+\sum_{i=1}^{k-2}\epsilon_{i}h_{i}}\Big)\cdot\\&\ \ \ \Big(\sum_{m=0}^{2(N-1)}e^{2\pi imt}\prod_{\underline{\epsilon}\in V_{k}^{*}\backslash (V_{k,k-1}^{*}\cup V_{k,k}^{*}) }a_{\phi(\underline{\epsilon}),m+\sum_{i=1}^{k-2}\epsilon_{i}h_{i}}\Big)\Big|^{2}dt
		\\&\le
		\frac{1}{N^{k-1}}\sum_{h_{1},\cdots,h_{k-2}=0}^{N-1}\Big((2(N-1)+1)\sup_{t}\Big|\frac{1}{N}\sum_{h_{k}=0}^{N-1}e^{-2\pi ih_{k}t}\prod_{\underline{\epsilon}\in A^{k}_{k-1} }a_{\phi(\underline{\epsilon}),\epsilon_{k}h_{k}+\sum_{i=1}^{k-2}\epsilon_{i}h_{i}}\Big|^{2}\Big)\\&\le 
		\frac{2}{N^{k-2}}\sum_{h_{1},\cdots,h_{k-2}=0}^{N-1}\sup_{t}\Big|\frac{1}{N}\sum_{h_{k}=0}^{N-1}e^{-2\pi ih_{k}t}\prod_{\underline{\epsilon}\in A^{k}_{k-1} }a_{\phi(\underline{\epsilon}),\epsilon_{k}h_{k}+\sum_{i=1}^{k-2}\epsilon_{i}h_{i}}\Big|^{2}
		\end{align*} where the first inequality comes from Jesen's inequality and the second inequality comes from Parseval's identity. This finishes the proof.
	\end{proof}
	\subsection{An extimation for multivariable non-conventional ergodic averages}
		\begin{lemma}\label{lem3}
			Let $(X,\B,\mu,T)$ be a system, $d\in \N$, $f_{1},\cdots,f_{d}$ be bounded real valued functions to be bounded by one, $a_1,\cdots,a_d$ be distinct non-zero integers, and $\mu^{\mathcal{A}}_{d}$ be Furstenberg self-joining of $(X,\B,\mu,T)$ with respect to polynomial family $\mathcal{A}=\{a_{1}n,\cdots,a_{d}n\}$. Then $$\int_{X^{d}}\limsup_{N\rightarrow \infty}\sup_{t}\Big|\frac{1}{N}\sum_{n=1}^{N}e^{2\pi int}\prod_{i=1}^{d}f_{i}(T^{a_{i}n}x_{i})\Big|d\mu^{\mathcal{A}}_{d}(\textbf{x})\lesssim_{a_1,\cdots,a_d}\min_{1\le i\le d}\nnorm {f_{i}}_{d+3}.$$  
		\end{lemma}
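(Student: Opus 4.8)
The plan is to peel off the oscillatory factor $e^{2\pi int}$ with a single use of van der Corput's inequality (Lemma~\ref{lem1}), which replaces each $f_i$ by the ``doubled'' function $f_i\cdot T^{a_ih}f_i$ and produces a product average over the system $(X^d,\B^{\otimes d},\mu^{\mathcal A}_d,S)$ with $S=T^{a_1}\times\cdots\times T^{a_d}$; after that one climbs from the $(d+1)$st Host--Kra seminorm up to the $(d+3)$rd by two rounds of averaging over shift parameters, each governed by Proposition~\ref{prop2}, with Proposition~\ref{prop3} used once in between.

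First I would fix real $f_1,\dots,f_d$ bounded by one, note that $S$ preserves $\mu^{\mathcal A}_d$ (Proposition~\ref{prop1}), and for $h\in\Z$ set $g_{i,h}=f_i\cdot T^{a_ih}f_i$ and $G_h=g_{1,h}\otimes\cdots\otimes g_{d,h}$, a real function on $X^d$ bounded by one. Applying Lemma~\ref{lem1} to $u_n=e^{2\pi int}\prod_{i=1}^d f_i(T^{a_in}x_i)$ and using that the correlation terms $u_n\overline{u_{n+h}}=e^{-2\pi iht}\prod_{i=1}^d g_{i,h}(T^{a_in}x_i)$ depend on $t$ only through a unimodular constant, one obtains, for every $\textbf{x}\in X^d$ and all $1\le H\le N$,
$$\sup_{t}\Big|\frac{1}{N}\sum_{n=1}^N u_n\Big|^{2}\ \le\ \frac{2}{H}+\frac{4}{H}\sum_{h=1}^{H-1}\Big|\frac{1}{N}\sum_{n=1}^N (S^nG_h)(\textbf{x})\Big|.$$
Taking $\limsup_{N\to\infty}$ and integrating against $\mu^{\mathcal A}_d$, Birkhoff's pointwise ergodic theorem on $(X^d,\B^{\otimes d},\mu^{\mathcal A}_d,S)$ identifies $\limsup_N|\frac1N\sum_{n=1}^N S^nG_h|$ with $|\mathbb{E}(G_h\mid\I(S))|$ almost everywhere, so the problem reduces to bounding $\int_{X^d}|\mathbb{E}(G_h\mid\I(S))|\,d\mu^{\mathcal A}_d$ in terms of a seminorm of some $f_i$ and then averaging over $h$.

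For that term I would use the mean ergodic theorem together with the defining formula of the Furstenberg self-joining. Since $\mathbb{E}(G_h\mid\I(S))$ is the $L^2(\mu^{\mathcal A}_d)$-limit of $\frac1N\sum_{n=0}^{N-1}S^nG_h$, Cauchy--Schwarz gives $\int_{X^d}|\mathbb{E}(G_h\mid\I(S))|\,d\mu^{\mathcal A}_d\le\|\mathbb{E}(G_h\mid\I(S))\|_{L^2(\mu^{\mathcal A}_d)}$ and $\|\mathbb{E}(G_h\mid\I(S))\|_{L^2(\mu^{\mathcal A}_d)}^{2}=\lim_N\frac1N\sum_{n=0}^{N-1}\langle S^nG_h,G_h\rangle_{L^2(\mu^{\mathcal A}_d)}$. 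Here $\langle S^nG_h,G_h\rangle_{L^2(\mu^{\mathcal A}_d)}=\int_{X^d}\bigotimes_{i=1}^{d}\big(g_{i,h}\cdot T^{a_in}g_{i,h}\big)\,d\mu^{\mathcal A}_d$, which by the definition of $\mu^{\mathcal A}_d$ equals $\lim_{L\to\infty}\frac1L\sum_{l=0}^{L-1}\int_X\prod_{i=1}^{d}T^{a_il}\big(g_{i,h}\cdot T^{a_in}g_{i,h}\big)\,d\mu$; since the functions $g_{i,h}\cdot T^{a_in}g_{i,h}$ are real and bounded by one, Proposition~\ref{prop3} bounds the absolute value of this by $A(a_1,\dots,a_d)\min_{i}\nnorm{g_{i,h}\cdot T^{a_in}g_{i,h}}_{d+1}$. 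Averaging over $n$, Jensen's inequality and Proposition~\ref{prop2} (with $k=d+1$, $a=a_i$) then give, for each $i$, $\|\mathbb{E}(G_h\mid\I(S))\|_{L^2(\mu^{\mathcal A}_d)}\lesssim_{a_1,\dots,a_d}\nnorm{g_{i,h}}_{d+2}=\nnorm{f_i\cdot T^{a_ih}f_i}_{d+2}$.

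To finish, feed this back: by $\int_{X^d}\limsup_N\sup_t|\cdots|\,d\mu^{\mathcal A}_d\le\big(\int_{X^d}\limsup_N\sup_t|\cdots|^{2}\,d\mu^{\mathcal A}_d\big)^{1/2}$ and the displayed inequality, for every $H$ the left-hand side of the lemma is $\lesssim_{a_1,\dots,a_d}\big(\frac1H+\frac1H\sum_{h=1}^{H-1}\nnorm{f_i\cdot T^{a_ih}f_i}_{d+2}\big)^{1/2}$; since the left-hand side does not depend on $H$, letting $H\to\infty$ and invoking Jensen's inequality together with Proposition~\ref{prop2} once more (with $k=d+2$, $a=a_i$, $f=f_i$) yields a bound $\lesssim_{a_1,\dots,a_d}\nnorm{f_i}_{d+3}$ valid for every $i$, hence the claimed inequality with the minimum over $i$. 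The step I expect to be the main obstacle is the middle one: legitimately interchanging the finitely many shift-sums with the Cesàro average defining $\mu^{\mathcal A}_d$ and checking that Proposition~\ref{prop3} still applies after this rearrangement (the functions involved remain real and bounded by one, so this is sound). The index $d+3=(d+1)+1+1$ is exactly accounted for by the single van der Corput step plus the two applications of Proposition~\ref{prop2}.
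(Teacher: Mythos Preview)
Your proposal is correct and follows essentially the same route as the paper's proof: one van der Corput step to remove the phase, passage to the Furstenberg self-joining via Birkhoff's theorem and the defining formula for $\mu^{\mathcal A}_d$, a single application of Proposition~\ref{prop3} at level $d+1$, and then two applications of Proposition~\ref{prop2} to climb to $d+3$. The only cosmetic difference is that where the paper applies van der Corput a second time (with a parameter $H_2$) to control $\int_{X^d}\limsup_N|\frac1N\sum_n S^nG_h|^2\,d\mu^{\mathcal A}_d$, you instead invoke the mean ergodic theorem to write $\|\mathbb{E}(G_h\mid\I(S))\|_2^2=\lim_N\frac1N\sum_n\langle S^nG_h,G_h\rangle$; both devices produce the same correlation $\int_{X^d}\bigotimes_i(g_{i,h}\cdot T^{a_in}g_{i,h})\,d\mu^{\mathcal A}_d$ and the argument proceeds identically from there.
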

		\begin{proof}
			Let $C>2$ and fix it. By Lemma \ref{lem1}, we have 
			\begin{align*}
			&\int_{X^{d}}\limsup_{N\rightarrow \infty}\sup_{t}\Big|\frac{1}{N}\sum_{n=1}^{N}e^{2\pi int}\prod_{i=1}^{d}f_{i}(T^{a_{i}n}x_{i})\Big|^{2}d\mu^{\mathcal{A}}_{d}(\textbf{x})
			\\&\le
			\frac{C}{H_{1}}+\frac{C}{H_{1}}\sum_{h_{1}=1}^{H_{1}-1}\int_{X^{d}} \limsup_{N\rightarrow \infty}\Big|\frac{1}{N}\sum_{n=1}^{N}\prod_{i=1}^{d}T^{a_{i}n}(f_{i}\cdot T^{a_{i}h_{1}}f_{i})(x_i)\Big|d\mu^{\mathcal{A}}_{d}(\textbf{x})
			\\&\le
			\frac{C}{H_{1}}+\frac{C}{H_{1}}\sum_{h_{1}=1}^{H_{1}-1}\Big(\int_{X^{d}} \limsup_{N\rightarrow \infty}\Big|\frac{1}{N}\sum_{n=1}^{N}\prod_{i=1}^{d}T^{a_{i}n}(f_{i}\cdot T^{a_{i}h_{1}}f_{i})(x_i)\Big|^{2}d\mu^{\mathcal{A}}_{d}(\textbf{x})\Big)^{\frac{1}{2}}
			\end{align*}
			for any $H_{1}\in \N$.
			By Proposition \ref{prop1}, Birkhoff's pointwise ergodic theorem and Lemma \ref{lem1}, we have 
			\begin{align*}
			&\int_{X^{d}}\limsup_{N\rightarrow \infty}\sup_{t}\Big|\frac{1}{N}\sum_{n=1}^{N}e^{2\pi int}\prod_{i=1}^{d}f_{i}(T^{a_{i}n}x_{i})\Big|^{2}d\mu^{\mathcal{A}}_{d}(\textbf{x})
			\\&\le
			\frac{C}{H_{1}}+\frac{C}{H_{1}}\sum_{h_{1}=1}^{H_{1}-1}\Big(\limsup_{N\rightarrow \infty}\int_{X^{d}} \Big|\frac{1}{N}\sum_{n=1}^{N}\prod_{i=1}^{d}T^{a_{i}n}(f_{i}\cdot T^{a_{i}h_{1}}f_{i})(x_i)\Big|^{2}d\mu^{\mathcal{A}}_{d}(\textbf{x})\Big)^{\frac{1}{2}}
			\\&\le
			\frac{C}{H_{1}}+\frac{C}{H_{1}}\sum_{h_{1}=1}^{H_{1}-1}\Big(\limsup_{N\rightarrow \infty}\int_{X^{d}}\frac{C}{H_{2}}+\frac{2(N+H_{2}-1)}{H_{2}N}\sum_{h_{2}=1}^{H_{2}-1}\frac{H_{2}-h_{2}}{H_{2}}\cdot \\&\ \ \ \frac{1}{N}\sum_{n=1}^{N}\prod_{i=1}^{d}T^{a_{i}n}(f_{i}\cdot T^{a_{i}h_{1}}f_{i})(x_i)T^{a_{i}(n+h_{2})}(f_{i}\cdot T^{a_{i}h_{1}}f_{i})(x_i)d\mu^{\mathcal{A}}_{d}(\textbf{x})\Big)^{\frac{1}{2}}
				\end{align*}
				for any $H_{2}\in \N$.
				By definition of $\mu^{\mathcal{A}}_{d}$, we know 
				\begin{align*}
				&\int_{X^{d}}\limsup_{N\rightarrow \infty}\sup_{t}\Big|\frac{1}{N}\sum_{n=1}^{N}e^{2\pi int}\prod_{i=1}^{d}f_{i}(T^{a_{i}n}x_{i})\Big|^{2}d\mu^{\mathcal{A}}_{d}(\textbf{x})
			\\&\le
			\frac{C}{H_{1}}+\frac{C}{H_{1}}\sum_{h_{1}=1}^{H_{1}-1}\Big(\frac{C}{H_{2}}+\limsup_{N\rightarrow \infty}\Big(\frac{2(N+H_{2}-1)}{H_{2}N}\sum_{h_{2}=1}^{H_{2}-1}\frac{H_{2}-h_{2}}{H_{2}}\cdot \\ &\ \ \ \frac{1}{N}\sum_{n=1}^{N}\lim_{M\rightarrow \infty} \frac{1}{M}\sum_{m=1}^{M}\int_{X}\prod_{i=1}^{d}T^{a_{i}(n+m)}(f_{i}\cdot T^{a_{i}h_{1}}f_{i}\cdot T^{a_{i}h_{2}}f_{i}\cdot T^{a_{i}(h_{1}+h_{2})}f_{i})d\mu(x)\Big)\Big)^{\frac{1}{2}}
			\\&\le
			\frac{C}{H_{1}}+\frac{C}{H_{1}}\sum_{h_{1}=1}^{H_{1}-1}\Big(\frac{C}{H_{2}}+\frac{C}{H_{2}}\sum_{h_{2}=1}^{H_{2}-1}\limsup_{N\rightarrow \infty} \frac{1}{N}\cdot\\&\ \ \ \sum_{n=1}^{N}\limsup_{M\rightarrow \infty} \norm{\frac{1}{M}\sum_{m=1}^{M}\prod_{i=1}^{d}T^{a_{i}(n+m)}(f_{i}\cdot T^{a_{i}h_{1}}f_{i}\cdot T^{a_{i}h_{2}}f_{i}\cdot T^{a_{i}(h_{1}+h_{2})}f_{i})}_{2}\Big)^{\frac{1}{2}}.
				\end{align*}
				By Proposition \ref{prop3}, we have 
				\begin{align*}
				&\int_{X^{d}}\limsup_{N\rightarrow \infty}\sup_{t}\Big|\frac{1}{N}\sum_{n=1}^{N}e^{2\pi int}\prod_{i=1}^{d}f_{i}(T^{a_{i}n}x_{i})\Big|^{2}d\mu^{\mathcal{A}}_{d}(\textbf{x})
			\\&\lesssim_{a_{1},\cdots,a_{d}}
			\frac{1}{H_{1}}+\frac{1}{H_{1}}\sum_{h_{1}=1}^{H_{1}-1}\Big(\frac{1}{H_{2}}+\frac{1}{H_{2}}\sum_{h_{2}=1}^{H_{2}-1}\min_{1\le i\le d}\nnorm {f_{i}\cdot T^{a_{i}h_{1}}f_{i}\cdot T^{a_{i}h_{2}}(f_{i}\cdot T^{a_{i}h_{1}}f_{i})}_{d+1}\Big)^{\frac{1}{2}}.
			\end{align*}
			By Proposition \ref{prop2} and taking $\limsup$ for $H_{1},H_{2}$ respectively, we have 
			\begin{align*}
			&\int_{X^{d}}\limsup_{N\rightarrow \infty}\sup_{t}\Big|\frac{1}{N}\sum_{n=1}^{N}e^{2\pi int}\prod_{i=1}^{d}f_{i}(T^{a_{i}n}x_{i})\Big|^{2}d\mu^{\mathcal{A}}_{d}(\textbf{x})
			\\&\lesssim_{a_{1},\cdots,a_{d} }\min_{1\le i\le d}
			\limsup_{H_{1}\rightarrow \infty}\frac{1}{H_{1}}\sum_{h_{1}=1}^{H_{1}-1}\Big(\limsup_{H_{2}\rightarrow \infty}\frac{1}{H_{2}}\sum_{h_{2}=1}^{H_{2}-1}\nnorm {(f_{i}T^{a_{i}h_{1}}f_{i})T^{a_{i}h_{2}}(f_{i} T^{a_{i}h_{1}}f_{i})}_{d+1}\Big)^{\frac{1}{2}}
			\\&\lesssim_{a_{1},\cdots,a_{d} }\min_{1\le i\le d}
			\limsup_{H_{1}\rightarrow \infty}\frac{1}{H_{1}}\sum_{h_{1}=1}^{H_{1}-1}\nnorm {f_{i}\cdot T^{a_{i}h_{1}}f_{i}}_{d+2}
			\\&\lesssim_{a_{1},\cdots,a_{d} }\min_{1\le i\le k}\nnorm {f_{i}}_{d+3}^{2}.
			\end{align*}
			
			Therefore, 
			$$\int_{X^{d}}\limsup_{N\rightarrow \infty}\sup_{t}\Big|\frac{1}{N}\sum_{n=1}^{N}e^{2\pi int}\prod_{i=1}^{d}f_{i}(T^{a_{i}n}x_{i})\Big|d\mu^{\mathcal{A}}_{d}(\textbf{x})\lesssim_{a_{1},\cdots,a_{d}}\min_{1\le i\le d}\nnorm {f_{i}}_{d+3}.$$ This finishes the proof.
		\end{proof}
	\section{Proof of Theorem \ref{A}}
	\begin{proof}[Proof of Theorem \ref{A}]
		Note that all constant sequences are nilsequences. So we only need to verify the following fact: Assume that for any $h_{1},\cdots,h_{d}\in L^{\infty}(\mu)$, we have $$\lim_{N\rightarrow \infty}\frac{1}{N}\sum_{n=0}^{N-1}\prod_{j=1}^{d}h_{j}(T^{a_{j}n}x)$$ exists almost everywhere. Then for any real valued bounded $f_{1},\cdots,f_{d}$ to be bounded by $\frac{1}{2}$, there exists full measure subset $X(f_{1},\cdots,f_{d})$ of $X$ such that for any $x\in X(f_{1},\cdots,f_{d})$, and any nilsequence $\textbf{b}=\{b_{n}\}_{n\in \Z}$, $$ \lim_{N\rightarrow \infty}\frac{1}{N}\sum_{n=0}^{N-1}b_{n}\prod_{j=1}^{d}f_{j}(T^{a_{j}n}x)$$ exists. 
		
		Choose a positive integer $k$ greater than one arbitrarily and fix it. Next, we show that the above fact holds for all $(k-1)$-step nilsequences. Once we have done it, we can finish the whole proof because the intersection of countable full measure subsets still is of full measure.
		
		For any $1\le j\le d$, let $$f_{j}=\mathbb{E}(f_{j}|\mathcal{Z}_{d+k}(T))+(f_{j}-\mathbb{E}(f_{j}|\mathcal{Z}_{d+k}(T))).$$ Based on this decomposition, the rest can be divided into two steps.
		\begin{step}
		At this step, we verify the following: If we can find $j$ from $\{1,\cdots,d\}$ such that $f_{j}$ has zero conditional expectation with respect to $\mathcal{Z}_{d+k}(T)$, then there exists full measure subset $X(f_{1},\cdots,f_{d})$ of $X$ such that for any $x\in X(f_{1},\cdots,f_{d})$, and any $(k-1)$-step nilsequence $\textbf{b}=\{b_{n}\}_{n\in \Z}$, $$ \lim_{N\rightarrow \infty}\frac{1}{N}\sum_{n=0}^{N-1}b_{n}\prod_{j=1}^{d}f_{j}(T^{a_{j}n}x)=0.$$
		\end{step}
		 The assumption tells us that there exists a full measure subset $X_1$ of $X$ such that for any $x\in X_1$, any $\underline{h}\in \Z^{k}$, we can define $c_{\underline{h}}(x)$ by $$c_{\underline{h}}(x)=\lim_{N\rightarrow \infty}\frac{1}{N}\sum_{n=0}^{N-1}\prod_{\underline{\epsilon}\in V_{k}}a_{n+\underline{h}\cdot \underline{\epsilon}}(x)$$ where for any $n\in\Z$, $a_{n}(x)=\prod_{j=1}^{d}f_{j}(T^{a_{j}n}x)$. By \cite[Proposition 2.2]{HKU}, for any $x\in X_{1}$, we can define $\nnorm {\textbf{a}(x)}_{k}$ by  
		$$\nnorm {\textbf{a}(x)}_{k}=\Big(\lim_{H\rightarrow \infty}\frac{1}{H^k}\sum_{\underline{h}\in [H]^{k}}c_{\underline{h}}(x)\Big)^{\frac{1}{2^{k}}}.$$
		Next, we verify that for $\mu$-a.e. $x\in X_{1}$, $\nnorm{\textbf{a}(x)}_{k}=0$. The rest proof of this step is divided into two parts.
		
		\textbf{Part I}: $k>2$.  Let $\mu^{\mathcal{A}}_{d}$ be Furstenberg self-joining of $(X,\B,\mu,T)$ with respect to polynomial family $\mathcal{A}=\{a_{1}n,\cdots,a_{d}n\}$. By the definition of $\mu^{\mathcal{A}}_{d}$ and \cite[Proposition 2.2]{HKU}, we have 
		\begin{align*}
		&\int_{X}\lim_{H\rightarrow \infty}\frac{1}{H^k}\sum_{\underline{h}\in [H]^{k}}c_{\underline{h}}(x)d\mu(x)
		\\&=
		\lim_{H\rightarrow \infty}\frac{1}{H^k}\sum_{\underline{h}\in [H]^{k}}\int_{X}\lim_{N\rightarrow \infty}\frac{1}{N}\sum_{n=0}^{N-1}\prod_{\underline{\epsilon}\in V_{k}}a_{n+\underline{h}\cdot \underline{\epsilon}}(x)d\mu(x)
		\\&=
		\lim_{H\rightarrow \infty}\frac{1}{H^k}\sum_{\underline{h}\in [H]^{k}}\lim_{N\rightarrow \infty}\frac{1}{N}\sum_{n=0}^{N-1}\int_{X}\prod_{j=1}^{d}T^{a_{j}n}(\prod_{\underline{\epsilon}\in V_{k}}T^{a_{j}(\underline{h}\cdot \underline{\epsilon})}f_{j})d\mu(x)
		\\&=
		\lim_{H\rightarrow \infty}\frac{1}{H^k}\sum_{\underline{h}\in [H]^{k}}\int_{X^{d}}(\prod_{\underline{\epsilon}\in V_{k}}T^{a_{1}(\underline{h}\cdot \underline{\epsilon})}f_{1})\otimes \cdots \otimes (\prod_{\underline{\epsilon}\in V_{k}}T^{a_{d}(\underline{h}\cdot \underline{\epsilon})}f_{d})d\mu^{\mathcal{A}}_{d}(\textbf{x}).
		\end{align*}
		By Fatou's Lemma, we know 
		\begin{align*}
		&\int_{X}\lim_{H\rightarrow \infty}\frac{1}{H^k}\sum_{\underline{h}\in [H]^{k}}c_{\underline{h}}(x)d\mu(x)
		\\&\le 
		\Big(\int_{X^{d}}\limsup_{H\rightarrow \infty}\Big|\frac{1}{H^k}\sum_{\underline{h}\in [H]^{k}}(\prod_{\underline{\epsilon}\in V_{k}}T^{a_{1}(\underline{h}\cdot \underline{\epsilon})}f_{1})\otimes \cdots \otimes (\prod_{\underline{\epsilon}\in V_{k}}T^{a_{d}(\underline{h}\cdot \underline{\epsilon})}f_{d})\Big|^{2^{k-2}}d\mu^{\mathcal{A}}_{d}(\textbf{x})\Big)^{\frac{1}{2^{k-2}}}.
		\end{align*}
		Note that 
		\begin{align*}
		&\frac{1}{H^k}\sum_{\underline{h}\in [H]^{k}}(\prod_{\underline{\epsilon}\in V_{k}}T^{a_{1}(\underline{h}\cdot \underline{\epsilon})}f_{1})\otimes \cdots \otimes (\prod_{\underline{\epsilon}\in V_{k}}T^{a_{d}(\underline{h}\cdot \underline{\epsilon})}f_{d})
		\\&=
		\Big(f_{1}\otimes \cdots \otimes f_{d}\Big)\frac{1}{H^{k}}\sum_{\underline{h}\in [H]^{k}}\prod_{\underline{\epsilon}\in V_{k}^{*}}G_{\phi(\underline{\epsilon}),\underline{h}\cdot \underline{\epsilon}}
		\end{align*}
		where $\{G_{i,n}\}_{1\le i\le 2^{k}-1,n\in\Z}$ is a family of function sequences and for any $1\le i\le 2^{k}-1,n\in\Z$, $G_{i,n}=T^{a_{1}n}f_{1}\otimes \cdots\otimes T^{a_{d}n}f_{d}$.
		
		Let $C>2,\tilde{C}>(1024)^{k}C^{1024k}$ and fix them. 
		By Lemma \ref{lem2}, we know that 
		\begin{align*}
		&\Big(\frac{1}{H^k}\sum_{\underline{h}\in [H]^{k}}(\prod_{\underline{\epsilon}\in V_{k}}T^{a_{1}(\underline{h}\cdot \underline{\epsilon})}f_{1})\otimes \cdots \otimes (\prod_{\underline{\epsilon}\in V_{k}}T^{a_{d}(\underline{h}\cdot \underline{\epsilon})}f_{d})\Big)^{2}
		\\&\le
		\frac{C}{H^{k-2}}\sum_{h_{1},\cdots,h_{k-2}=0}^{H-1}\sup_{t}\Big|\frac{1}{H}\sum_{h_{k}=0}^{H-1}e^{2\pi ih_{k}t}\prod_{\underline{\epsilon}\in A_{k-1}^{k}}G_{\phi(\underline{\epsilon}),h_{k}\epsilon_{k}+\sum_{i=1}^{k-2}h_{i}\epsilon_{i}}\Big|^{2}.
		\end{align*}
		By an easy computation, for $k\ge 4$, we have 
		\begin{align*}
		& H\Big|\frac{1}{H^k}\sum_{\underline{h}\in [H]^{k}}(\prod_{\underline{\epsilon}\in V_{k}}T^{a_{1}(\underline{h}\cdot \underline{\epsilon})}f_{1})\otimes \cdots \otimes (\prod_{\underline{\epsilon}\in V_{k}}T^{a_{d}(\underline{h}\cdot \underline{\epsilon})}f_{d})\Big|^{2^{k-2}}\\&\le \sum_{h_{1}=0}^{H-1}
		\Big(\frac{1}{H}\sum_{h_{2}=0}^{H-1}\Big(\cdots \Big(\frac{1}{H}\sum_{h_{k-2}=0}^{H-1}\sup_{t}\Big|\frac{1}{H}\sum_{h_{k}=0}^{H-1}e^{2\pi ih_{k}t}\cdot\\&\ \ \ \prod_{\underline{\epsilon}\in A_{k-1}^{k}}G_{\phi(\underline{\epsilon}),h_{k}\epsilon_{k}+\sum_{i=1}^{k-2}h_{i}\epsilon_{i}}\Big|^{2}\Big)^{2}\cdots \Big)^{2}\Big)^{2}.
		\end{align*}
		Choose sufficiently large $H$ and fix it. By Lemma \ref{lem1}, we have for any $1\le K\le H$ and any $(h_{1},\cdots,h_{k-2})\in [H]^{k-2}$,
		\begin{align*}
		&\sup_{t}\Big|\frac{1}{H}\sum_{h_{k}=0}^{H-1}e^{2\pi ih_{k}t}\prod_{\underline{\epsilon}\in A_{k-1}^{k}}G_{\phi(\underline{\epsilon}),h_{k}\epsilon_{k}+\sum_{i=1}^{k-2}h_{i}\epsilon_{i}}\Big|^{2}
		\\&\le
		\frac{C}{K}+\frac{C}{K}\sum_{s=1}^{K}\Big|\frac{1}{H}\sum_{n=0}^{H-1}\Big(\prod_{\underline{\epsilon}\in A_{k-1}^{k}}G_{\phi(\underline{\epsilon}),n+\sum_{i=1}^{k-2}h_{i}\epsilon_{i}}\Big)\cdot \Big(\prod_{\underline{\epsilon}\in A_{k-1}^{k}}G_{\phi(\underline{\epsilon}),n+s+\sum_{i=1}^{k-2}h_{i}\epsilon_{i}}\Big)\Big|.
		\end{align*}
		Then we have
		\begin{align*}
		&\frac{1}{H}\sum_{h_{k-2}=0}^{H-1}\sup_{t}\Big|\frac{1}{H}\sum_{h_{k}=0}^{H-1}e^{2\pi ih_{k}t}\prod_{\underline{\epsilon}\in A_{k-1}^{k}}G_{\phi(\underline{\epsilon}),h_{k}\epsilon_{k}+\sum_{i=1}^{k-2}h_{i}\epsilon_{i}}\Big|^{2}
		\\&\le
		\frac{C}{K}+\frac{C}{K}\sum_{s=1}^{K}\frac{1}{H}\sum_{h_{k-2}=0}^{H-1}\Big|\frac{1}{H}\sum_{n=0}^{H-1}\prod_{\underline{\epsilon}\in A_{k-1}^{k}}G_{\phi(\underline{\epsilon}),n+\sum_{i=1}^{k-2}h_{i}\epsilon_{i}}\prod_{\underline{\epsilon}\in A_{k-1}^{k}}G_{\phi(\underline{\epsilon}),n+s+\sum_{i=1}^{k-2}h_{i}\epsilon_{i}}\Big|
		\\&\le
		\frac{C}{K}+\frac{C}{K}\sum_{s=1}^{K}\Big(\frac{1}{H}\sum_{h_{k-2}=0}^{H-1}\Big|\frac{1}{H}\sum_{n=0}^{H-1}\prod_{\underline{\epsilon}\in A_{k-1}^{k}}G_{\phi(\underline{\epsilon}),n+\sum_{i=1}^{k-2}h_{i}\epsilon_{i}} \prod_{\underline{\epsilon}\in A_{k-1}^{k}}G_{\phi(\underline{\epsilon}),n+s+\sum_{i=1}^{k-2}h_{i}\epsilon_{i}}\Big|^{2}\Big)^{\frac{1}{2}}
		\end{align*}
		for any $(h_{1},\cdots,h_{k-3})\in [H]^{k-3}$.
		For any $1\le s\le K$, by Proposition \ref{prop6}, we know 
		\begin{align*}
		&\frac{1}{H}\sum_{h_{k-2}=0}^{H-1}\Big|\frac{1}{H}\sum_{n=0}^{H-1}\prod_{\underline{\epsilon}\in A_{k-1}^{k}}G_{\phi(\underline{\epsilon}),n+\sum_{i=1}^{k-2}h_{i}\epsilon_{i}}\prod_{\underline{\epsilon}\in A_{k-1}^{k}}G_{\phi(\underline{\epsilon}),n+s+\sum_{i=1}^{k-2}h_{i}\epsilon_{i}}\Big|^{2}
		\\&\le
		\sup_{t}\Big|\frac{1}{H}\sum_{m=0}^{2(H-1)}e^{2\pi imt}\Big(\prod_{\underline{\epsilon}\in V_{k-3}^{*}}T^{a_{1}(m+\sum_{i=1}^{k-3}h_{i}\epsilon_{i})}f_{1}\otimes \cdots \otimes \prod_{\underline{\epsilon}\in V_{k-3}^{*}}T^{a_{d}(m+\sum_{i=1}^{k-3}h_{i}\epsilon_{i})}f_{d}\Big)\cdot \\&\ \ \  \Big(\prod_{\underline{\epsilon}\in V_{k-3}^{*}}T^{a_{1}(m+s+\sum_{i=1}^{k-3}h_{i}\epsilon_{i})}f_{1}\otimes \cdots \otimes \prod_{\underline{\epsilon}\in V_{k-3}^{*}}T^{a_{d}(m+s+\sum_{i=1}^{k-3}h_{i}\epsilon_{i})}f_{d}\Big)\Big|^{2}.
		\end{align*}
		
		Therefore,
		\begin{align*}
		&\frac{1}{H}\sum_{h_{k-2}=0}^{H-1}\sup_{t}\Big|\frac{1}{H}\sum_{h_{k}=0}^{H-1}e^{2\pi ih_{k}t}\prod_{\underline{\epsilon}\in A_{k-1}^{k}}G_{\phi(\underline{\epsilon}),h_{k}\epsilon_{k}+\sum_{i=1}^{k-2}h_{i}\epsilon_{i}}\Big|^{2}
		\\&\le
		\frac{C}{K}+\frac{C}{K}\sum_{s=1}^{K}	\sup_{t}\Big|\frac{1}{H}\sum_{m=0}^{2(H-1)}e^{2\pi imt}\cdot\\&\ \ \ \Big(\prod_{\underline{\epsilon}\in V_{k-3}^{*}}T^{a_{1}(m+\sum_{i=1}^{k-3}h_{i}\epsilon_{i})}f_{1}\otimes \cdots \otimes \prod_{\underline{\epsilon}\in V_{k-3}^{*}} T^{a_{d}(m+\sum_{i=1}^{k-3}h_{i}\epsilon_{i})}f_{d}\Big) \cdot\\&\ \ \  \Big(\prod_{\underline{\epsilon}\in V_{k-3}^{*}}T^{a_{1}(m+s+\sum_{i=1}^{k-3}h_{i}\epsilon_{i})}f_{1}\otimes \cdots \otimes \prod_{\underline{\epsilon}\in V_{k-3}^{*}}T^{a_{d}(m+s+\sum_{i=1}^{k-3}h_{i}\epsilon_{i})}f_{d}\Big)\Big|.
		\end{align*}
		If $k=3$, by Proposition \ref{prop2}, Lemma \ref{lem3} and taking $\limsup$ for $H,K$ in turn, we have
		$$
		\int_{X}\lim_{H\rightarrow \infty}\frac{1}{H^3}\sum_{\underline{h}\in [H]^{3}}c_{\underline{h}}(x)d\mu(x)\lesssim_{a_{1},\cdots,a_{d} }\min_{1\le i\le d}\nnorm {f_{i}}_{d+4}.
		$$ If $k>3$, we go on working. 
		
		For any $(h_{1},\cdots,h_{k-4})\in [H]^{k-4}$, by Lemma \ref{lem1} and Proposition \ref{prop6}, we have
		\begin{align*}
		&\frac{1}{H}\sum_{h_{k-3}=0}^{H-1}\Big(\frac{1}{H}\sum_{h_{k-2}=0}^{H-1}\sup_{t}\Big|\frac{1}{H}\sum_{h_{k}=0}^{H-1}e^{2\pi ih_{k}t}\prod_{\underline{\epsilon}\in A_{k-1}^{k}}G_{\phi(\underline{\epsilon}),h_{k}\epsilon_{k}+\sum_{i=1}^{k-2}h_{i}\epsilon_{i}}\Big|^{2}\Big)^{2}
		\\&\le
		\frac{C^{2}}{K^{2}}+\frac{4C^{2}}{K}+\frac{4C^{3}}{K_{1}}+\frac{4C^{2}}{K}\sum_{s=1}^{K}\frac{C}{K_{1}}\sum_{t_{1}=1}^{K_{1}}\sup_{t}\Big|\frac{1}{2H-1}\sum_{m=0}^{4(H-1)}e^{2\pi imt}\cdot\\&\ \ \ 
		\Big(\prod_{\underline{\epsilon}\in V_{k-4}^{*}}T^{a_{1}(m+\sum_{i=1}^{k-4}h_{i}\epsilon_{i})}f_{1}\otimes \cdots \otimes \prod_{\underline{\epsilon}\in V_{k-4}^{*}} T^{a_{d}(m+\sum_{i=1}^{k-4}h_{i}\epsilon_{i})}f_{d}\Big)\cdot \\&\ \ \  \Big(\prod_{\underline{\epsilon}\in V_{k-4}^{*}}T^{a_{1}(m+t_{1}+\sum_{i=1}^{k-4}h_{i}\epsilon_{i})} f_{1}\otimes \cdots \otimes \prod_{\underline{\epsilon}\in V_{k-4}^{*}} T^{a_{d}(m+t_{1}+\sum_{i=1}^{k-4}h_{i}\epsilon_{i})}f_{d}\Big)\cdot \\&\ \ \  \Big(\prod_{\underline{\epsilon}\in V_{k-4}^{*}}T^{a_{1}(m+s+\sum_{i=1}^{k-4}h_{i}\epsilon_{i})}f_{1}\otimes \cdots \otimes \prod_{\underline{\epsilon}\in V_{k-4}^{*}}T^{a_{d}(m+s+\sum_{i=1}^{k-4}h_{i}\epsilon_{i})}f_{d}\Big)\cdot \\&\ \ \ \Big(\prod_{\underline{\epsilon}\in V_{k-4}^{*}}T^{a_{1}(m+s+t_{1}+\sum_{i=1}^{k-4}h_{i}\epsilon_{i})}f_{1}\otimes \cdots \otimes \prod_{\underline{\epsilon}\in V_{k-4}^{*}}T^{a_{d}(m+s+t_{1}+\sum_{i=1}^{k-4}h_{i}\epsilon_{i})}f_{d}\Big)
		\Big|
		\end{align*}
		for any $1\le K_{1}\le H$.
		In the above process, the index $h_{k-3}$ vanishes and new index $t_{1}$ is introduced.
	If $k=4$, we can stop at here. If $k>4$, we repeat the process until index $h_{1}$ vanishes.
		Finally, if $k\ge 4$, for any $1\le K,K_{1},\cdots,K_{k-3}\le H$, we have 
		\begin{align*}
		&\Big|\frac{1}{H^k}\sum_{\underline{h}\in [H]^{k}}(\prod_{\underline{\epsilon}\in V_{k}}T^{a_{1}(\underline{h}\cdot \underline{\epsilon})}f_{1})\otimes \cdots \otimes (\prod_{\underline{\epsilon}\in V_{k}}T^{a_{d}(\underline{h}\cdot \underline{\epsilon})}f_{d})\Big|^{2^{k-2}}
		\\&\le
		R(K,K_{1},\cdots,K_{k-3})+\tilde{C}\frac{1}{K}\sum_{s=1}^{K}\frac{1}{K_{1}}\sum_{t_{1}=1}^{K_{1}}\cdots \frac{1}{K_{k-3}}\sum_{t_{k-3}=1}^{K_{k-3}}
		\sup_{t}\Big|\frac{1}{2^{k-2}(H-1)+1}\cdot\\&\ \ \ \sum_{m=0}^{2^{k-2}(H-1)}e^{2\pi imt}\Big(\prod_{\underline{\epsilon}\in V_{k-2}}T^{a_{1}(m+\underline{\epsilon}\cdot \underline{k})}f_{1}\Big)\otimes \cdots \otimes \Big(\prod_{\underline{\epsilon}\in V_{k-2}}T^{a_{d}(m+\underline{\epsilon}\cdot \underline{k})}f_{d}\Big)\Big|
		\end{align*}
 where $\underline{k}=(t_{k-3},\cdots,t_{1},s),$ and $$\lim_{K\rightarrow \infty}\cdots \lim_{K_{k-3}\rightarrow \infty}R(K,K_{1},\cdots,K_{k-3})=0.$$ 
		
		By Proposition \ref{prop2}, Lemma \ref{lem3} and taking $\limsup$ for $H,K_{k-3},\cdots,K_{1},K$ in turn, we have
		$$
		\int_{X}\lim_{H\rightarrow \infty}\frac{1}{H^k}\sum_{\underline{h}\in [H]^{k}}c_{\underline{h}}(x)d\mu(x)\lesssim_{a_{1},\cdots,a_{d} }\min_{1\le i\le d}\nnorm {f_{i}}_{d+k+1}.
		$$
		\textbf{Part II}: $k=2$. By Proposition \ref{prop6} and Lemma \ref{lem3}, we have
		\begin{align*}
		&\int_{X}\lim_{H\rightarrow \infty}\frac{1}{H^k}\sum_{\underline{h}\in [H]^{k}}c_{\underline{h}}(x)d\mu(x)
		\\&\le
		\Big(\int_{X^{d}}\limsup_{H\rightarrow \infty}\Big|\frac{1}{H^2}\sum_{\underline{h}\in [H]^{2}}(\prod_{\underline{\epsilon}\in V_{2}}T^{a_{1}(\underline{h}\cdot \underline{\epsilon})}f_{1})\otimes \cdots \otimes (\prod_{\underline{\epsilon}\in V_{2}}T^{a_{d}(\underline{h}\cdot \underline{\epsilon})}f_{d})\Big|^{2}d\mu^{\mathcal{A}}_{d}(\textbf{x})\Big)^{\frac{1}{2}}
		\\&\le
		\Big(\int_{X^{d}}\limsup_{H\rightarrow \infty}\Big|\frac{1}{H^2}\sum_{h_{1},h_{2}=0}^{H-1}(T^{a_{1}h_{1}}f_{1}T^{a_{1}h_{2}}f_{1}T^{a_{1}(h_{1}+h_{2})}f_{1})\otimes \cdots \otimes \\&\ \ \  (T^{a_{d}h_{1}}f_{d}T^{a_{d}h_{2}}f_{d}T^{a_{d}(h_{1}+h_{2})}f_{d})\Big|^{2}d\mu^{\mathcal{A}}_{d}(\textbf{x})\Big)^{\frac{1}{2}}
		\\&\le
		\Big(4\int_{X^{d}}\limsup_{H\rightarrow \infty}\sup_{t}\Big|\frac{1}{2H-1}\sum_{m=0}^{2(H-1)}e^{2\pi imt}(T^{a_{1}m}f_{1})\otimes \cdots \otimes  (T^{a_{d}m}f_{d})\Big|^{2}d\mu^{\mathcal{A}}_{d}(\textbf{x})\Big)^{\frac{1}{2}}
		\\&\lesssim_{a_{1},\cdots,a_{d} }\min_{1\le i\le d}\nnorm {f_{i}}_{d+3}.
		\end{align*}
		To sum up, for any $k\ge 2$, we have 
		$$
		\int_{X}\lim_{H\rightarrow \infty}\frac{1}{H^k}\sum_{\underline{h}\in [H]^{k}}c_{\underline{h}}(x)d\mu(x)\lesssim_{a_{1},\cdots,a_{d} }\min_{1\le i\le d}\nnorm {f_{i}}_{d+k+1}.
		$$
		
		By Theorem \ref{thm1}, we know that for $\mu$-a.e. $x\in X_{1}$, $\nnorm{\textbf{a}(x)}_{k}=0$. By Proposition \ref{prop4}, we know that there exists full measure subset $X(f_{1},\cdots,f_{d})$ of $X$ such that for any $x\in X(f_{1},\cdots,f_{d})$, and any $(k-1)$-step nilsequence $\textbf{b}=\{b_{n}\}_{n\in \Z}$, $$ \lim_{N\rightarrow \infty}\frac{1}{N}\sum_{n=0}^{N-1}b_{n}\prod_{j=1}^{d}f_{j}(T^{a_{j}n}x)=0.$$
		
		\begin{step}
			At this step, we verify the following: If all $f_{j},1\le j\le d$ are measurable with respect to $\mathcal{Z}_{d+k}(T)$, then there exists full measure subset $X(f_{1},\cdots,f_{d})$ of $X$ such that for any $x\in X(f_{1},\cdots,f_{d})$, and any $(k-1)$-step nilsequence $\textbf{b}=\{b_{n}\}_{n\in \Z}$, $$ \lim_{N\rightarrow \infty}\frac{1}{N}\sum_{n=0}^{N-1}b_{n}\prod_{j=1}^{d}f_{j}(T^{a_{j}n}x)$$ exists.
		\end{step}
			By \cite[Theorem 16.10]{HK-book}, we know that for any $1\le j\le d$ and any $r\in \N$, we can find a function sequence $\{g_{j,m}\}_{m\ge 1}$ such that for each $m\ge 1$, the followings hold:
			\begin{itemize}
				\item[(1)]$\norm {g_{j,m}}_{\infty}\le \norm {f_{j}}_{\infty}$;
				\item[(2)]$\norm {g_{j,m}-f_{j}}_{1}\le \frac{1}{4^{m+1+r}Cd}$ where $C=d^{2^{k+d+1}}$;
				\item[(3)]for $\mu$-a.e. $x\in X$, $\{g_{j,m}(T^{a_{j}n}x)\}_{n\in \Z}$ is a $(d+k)$-step nilsequence.
			\end{itemize}
			Let $s=d+k+1$. By Birkhoff's pointwise ergodic theorem and our assumption, we can find a full measure subset $X_{0}$ of $X$ satisfies the followings:
			\begin{itemize}
				\item[(1)]for any $x\in X_{0}$, any $n\in \Z$ and any $1\le j\le d$, $|f_{j}(T^{a_{j}n}x)|\le \frac{1}{2}$;
				\item[(2)]for any $x\in X_{0}$, any $n\in \Z$, any $1\le j\le d$ and any $m\ge 1$, $|g_{j,m}(T^{a_{j}n}x)|\le  \frac{1}{2}$;
				\item[(3)]for any $x\in X_{0}$, any $1\le j\le d$ and any $m\ge 1$, $\{g_{j,m}(T^{a_{j}n}x)\}_{n\in \Z}$ is a $(d+k)$-step nilsequence;
				\item[(4)]for any $x\in X_{0}$, any $1\le j\le d$ and any $m\ge 1$, we have $$\lim_{N\rightarrow \infty}\frac{1}{N}\sum_{n=0}^{N-1}|f_{j}(T^{a_{j}n}x)-g_{j,m}(T^{a_{j}n}x)|=\mathbb{E}(|f_{j}-g_{j,m}|\big|\mathcal{I}_{a_{j}})(x)$$ where $\mathcal{I}_{a_{j}}$ is a sub-$\sigma$-algebra generated by all $T^{a_{j}}$-invariant subsets;
				\item[(5)]for any $x\in X_{0}$, any $m\ge 1$ and any $\underline{h}\in \Z^{s}$, the limit $$\lim_{N\rightarrow \infty}\frac{1}{N}\sum_{n=0}^{N-1}\prod_{\underline{\epsilon} \in V_{s}}a^{(m)}_{n+\underline{h}\cdot \underline{\epsilon}}$$ exists where 
				\begin{align*}
				a_{n}^{(m)}&=
				\prod_{j=1}^{d}f_{j}(T^{a_{j}n}x)-\prod_{j=1}^{d}g_{j,m}(T^{a_{j}n}x)
				\\&=
				\sum_{i=1}^{d}\prod_{j=1}^{i-1}f_{j}(T^{a_{j}n}x)\cdot (f_{i}(T^{a_{i}n}x)-g_{i,m}(T^{a_{i}n}x))\cdot \prod_{l=i+1}^{d}g_{l,m}(T^{a_{l}n}x)
				\end{align*}
				 for any $n\in\Z$. we denote the limit by $c_{\underline{h}}^{(m)}(x)$;
				\item[(6)]for any $x\in X_{0}$, any $m\ge 1$ and any $\underline{h}\in \Z^{s}$, $$|c_{\underline{h}}^{(m)}(x)|\le C\sum_{j=1}^{d}\mathbb{E}\Big(|f_{j}-g_{j,m}|\big|\mathcal{I}_{a_{j}}\Big)(x).$$
			\end{itemize}
			By \cite[Proposition 2.2]{HKU}, for any $m\ge 1$, we have 
			\begin{align*}
			&\int_{X}\lim_{H\rightarrow \infty}\frac{1}{H^s}\sum_{\underline{h}\in [H]^{s}}c^{(m)}_{\underline{h}}(x)d\mu(x)
			\\&=
			\lim_{H\rightarrow \infty}\frac{1}{H^s}\sum_{\underline{h}\in [H]^{s}}\int_{X}c^{(m)}_{\underline{h}}(x)d\mu(x)
			\\&\le
			\limsup_{H\rightarrow \infty}\frac{1}{H^s}\sum_{\underline{h}\in [H]^{s}}\int_{X}C\sum_{j=1}^{d}\mathbb{E}\big(|f_{j}-g_{j,m}|\big|\mathcal{I}_{a_{j}}\Big)(x)d\mu(x)
			\\&\le
			C\sum_{j=1}^{d}\norm {f_{j}-g_{j,m}}_{1}
			\\&=
			\frac{1}{4^{m+r+1}}. 
			\end{align*}
			Then for any $m\ge 1$, $$\mu(\{x\in X_{0}: \lim_{H\rightarrow \infty}\frac{1}{H^s}\sum_{\underline{h}\in [H]^{s}}c^{(m)}_{\underline{h}}(x)< \frac{1}{2^{m+1+r}}\})\ge 1-\frac{1}{2^{m+1+r}}.$$ Let $$X_{m}=\{x\in X_{0}: \lim_{H\rightarrow \infty}\frac{1}{H^s}\sum_{\underline{h}\in [H]^{s}}c^{(m)}_{\underline{h}}(x)\le \frac{1}{2^{m+1+r}}\}.$$
			
			For any $m\ge 1$, let $Y_{r}=\bigcap_{m\ge 1}X_{m}$, then $\mu(Y_{r})\ge 1-\frac{1}{2^{r}}$. Let $Y=\bigcup_{r\ge 1}Y_{r}$. Then $\mu(Y)=1$. 
			
			Note that the product of finite $(d+k)$-step nilsequences is still a $(d+k)$-step nilsequence and every $(k-1)$-step nilsequence can be viewed as a $(d+k)$-step nilsequence.
			
			By the definition of $Y$ and Proposition \ref{prop5}, we know that for any $x\in Y$, and any $(k-1)$-step nilsequence $\textbf{b}=\{b_{n}\}_{n\in \Z}$, $$ \lim_{N\rightarrow \infty}\frac{1}{N}\sum_{n=0}^{N-1}b_{n}\prod_{j=1}^{d}f_{j}(T^{a_{j}n}x)$$ exists. This finishes the whole proof.
	\end{proof}
	\section{Proof of Theorem \ref{B}}
	Before the proof, we provide a lemma. Based on \cite[Theorem 2.8]{FN2022} and \cite[Theorem 1.1]{W}, we can get the following lemma by repeating the arguments of the proof of \cite[Proposition 3.2]{HN}.
	\begin{lemma}\label{lem4}
	Given $d\in \N$, let $S_{1},\cdots,S_{d}$ be invertible measure preserving transformations acting on a Lebesgue space $(X,\B,\mu)$ such that $S_{1},\cdots,S_{d}$ are commuting and let $p_{1}(n),\cdots,p_{d}(n)$ be non-constant integer coefficents polynomials with $\deg p_{1}>\deg p_{2}>\cdots >\deg p_{d}\ge 2$. Then for any real valued $g_{1},\cdots,g_{d}\in L^{\infty}(\mu)$, and any strictly increasing sequence of positive integers $\{N_{k}\}_{k\ge 1}$, there exists a subsequence $\{N'_{k}\}_{k\ge 1}$ of $\{N_{k}\}_{k\ge 1}$ such that for $\mu$-a.e $x\in X$, the sequence $\{\prod_{j=1}^{d}g_{j}(S_{j}^{p_{j}(n)}x)\}_{n\in \Z}$ admits a correlation on sequence $\{N'_{k}\}_{k\ge 1}$. Moreover, if for some $j\in \{1,\cdots,d\}$, $\mathbb{E}(g_{j}|\mathcal{Z}_{\infty}(S_{j}))=0$, then the $0$-th coordinate projection $F_{0}:\Omega\rightarrow \R$ has zero conditional expectation with respect to the Pinsker factor\footnote{Every system $(Y,\mathcal{D},\nu,S)$ has a maximal zero entropy factor. The factor is called Pinsker factor of $(Y,\mathcal{D},\nu,S)$.} of the corresponding Furstenberg system.
	\end{lemma}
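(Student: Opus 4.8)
The plan is to imitate the proof of \cite[Proposition 3.2]{HN}, replacing its two one-parameter ingredients by their commuting multiparameter analogues; the statement splits into the two assertions, which I treat separately.

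\emph{First part: existence of the subsequence.} Fix an enumeration $\{q_i\}_{i\ge1}$ of all data $q=(s,(n_1,\dots,n_s))$ with $s\in\N$ and $(n_1,\dots,n_s)\in\Z^s$, and set $z_n(x)=\prod_{j=1}^d g_j(S_j^{p_j(n)}x)$. Then
$$\frac1N\sum_{n=1}^N\prod_{l=1}^s z_{n+n_l}(x)=\frac1N\sum_{n=1}^N\prod_{l=1}^s\prod_{j=1}^d g_j\big(S_j^{p_j(n+n_l)}x\big),$$
and after grouping the factors according to the distinct values among $n_1,\dots,n_s$ and absorbing the multiplicities into the (still bounded, real-valued) functions, the right-hand side becomes a polynomial multiple ergodic average for the commuting transformations $S_1,\dots,S_d$ whose iterates are the polynomials $p_j(n+m)$; since each $p_j$ is non-constant and $\deg p_1,\dots,\deg p_d$ are pairwise distinct, no two of these polynomials coincide, so \cite[Theorem 1.1]{W} gives convergence of this average in $L^2(\mu)$ as $N\to\infty$. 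From any $\{N_k\}$ I would then extract, for the datum $q_1$, a subsequence along which the corresponding average converges $\mu$-a.e., iterate over $q_2,q_3,\dots$, and pass to a diagonal subsequence $\{N'_k\}$; along $\{N'_k\}$ and off a $\mu$-null set (the countable union of the exceptional sets) every limit $\lim_k\frac1{N'_k}\sum_{n=1}^{N'_k}\prod_{l=1}^s z_{n+n_l}(x)$ exists, i.e. $\{z_n(x)\}_{n\in\Z}$ admits a correlation on $\{N'_k\}$.

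\emph{Second part: the Pinsker assertion.} Suppose $\mathbb{E}(g_{j_0}\,|\,\mathcal Z_\infty(S_{j_0}))=0$. Writing $\Phi\colon X\to\Omega$, $x\mapsto(z_n(x))_{n\in\Z}$, the Furstenberg measure $\nu$ is the weak-$*$ limit along $\{N'_k\}$ of the orbital averages of $\delta_{\Phi(x_0)}$ for a generic $x_0$, and by the identification above every $\nu$-correlation of $F_0$ is a limit along $\{N'_k\}$ of polynomial multiple ergodic averages over $(X,\mu)$ in which $g_{j_0}$ occurs as one of the functions. I would then invoke \cite[Theorem 2.8]{FN2022}, the structural description of the Pinsker factor of Furstenberg systems of polynomial orbits, to reduce $\mathbb{E}(F_0\,|\,\mathcal P)=0$ to the vanishing of a controlled family of such correlations, and establish that vanishing by running the van der Corput / PET-induction of \cite[Proposition 3.2]{HN} on these averages: the pairwise distinct degrees $\ge 2$ keep the induction nondegenerate, and Proposition \ref{prop2} together with Theorem \ref{thm1} bound the output by a Host--Kra seminorm $\nnorm{g_{j_0}}_{m}$, which is $0$ by hypothesis. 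Hence $F_0\perp L^2(\mathcal P)$, i.e. $\mathbb{E}(F_0\,|\,\mathcal P)=0$.

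The hard part is the second assertion: one must verify that the van der Corput / PET scheme of \cite[Proposition 3.2]{HN}, written there for a single polynomial orbit of one transformation, still terminates with control by a single Host--Kra seminorm of $g_{j_0}$ when the orbit is replaced by the product $\prod_j g_j(S_j^{p_j(n)}x)$ over commuting transformations — this is precisely where distinct degrees $\ge 2$ are used, to prevent the extra polynomial factors from disrupting the induction — and that the Pinsker-factor description of \cite[Theorem 2.8]{FN2022} applies in this multiparameter setting. Given \cite[Theorem 1.1]{W}, the first assertion is routine.
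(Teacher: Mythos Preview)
Your proposal is correct and follows essentially the same route as the paper, which does not give a detailed argument but simply states that the lemma is obtained ``by repeating the arguments of the proof of \cite[Proposition 3.2]{HN}'' with \cite[Theorem 2.8]{FN2022} and \cite[Theorem 1.1]{W} as inputs; your two parts use exactly these three ingredients in the expected way. One small caution: \cite[Theorem 2.8]{FN2022} is a seminorm/characteristic-factor estimate for commuting polynomial averages rather than a direct ``structural description of the Pinsker factor'', so in the second part it enters by bounding the relevant correlations by $\nnorm{g_{j_0}}_m$ (which then feeds into the Pinsker argument of \cite{HN}), not as a Pinsker-structure theorem in its own right.
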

	Now, we begin to prove the Theorem \ref{B}. The idea of this proof is from the proof of  \cite[Proposition 4.1,4.2]{HN} partly.
	\begin{proof}[Proof of Theorem \ref{B}]
	Without loss of generality, we can assume that $f_{1},f_{2},g_{1},\cdots,\\g_{d}$ are bounded real valued functions which take value on interval $[-1,1]$. Let $\Omega=[-1,1]^{\Z}$. Let $\mathcal{F}$ be the Borel $\sigma$-algebra of $\Omega$. The rest proof is divided into two parts.
	
		\textbf{Part I}: We verify the following: If we can find $j$ from $\{1,\cdots,d\}$ such that $g_{j}$ has zero conditional expectation with respect to $\mathcal{Z}_{\infty}(S_{j})$, then $$\lim_{N\rightarrow \infty}\frac{1}{N}\sum_{n=1}^{N}f_{1}(T^{an}x)f_{2}(T^{bn}x)\prod_{j=1}^{d}g_{j}(S_{j}^{p_{j}(n)}x)=0$$ in $L^{2}(\mu)$.
		
		Suppose that the above result fails. Then there exist $\epsilon>0$ and a strictly increasing sequence of positive integers $\{N_{k}\}_{k\ge 1}$ such that for any $k\in \N$, $$\norm {\frac{1}{N_k}\sum_{n=1}^{N_k}f_{1}(T^{an}x)f_{2}(T^{bn}x)\prod_{j=1}^{d}g_{j}(S_{j}^{p_{j}(n)}x)}_{2}\ge \epsilon.$$ By Lemma \ref{lem4}, there exists a subsequence $\{N'_{k}\}_{k\ge 1}$ of $\{N_{k}\}_{k\ge 1}$ such that for $\mu$-a.e $x\in X$, the sequence $\{\prod_{j=1}^{d}g_{j}(S_{j}^{p_{j}(n)}x)\}_{n\in \Z}$ admits a correlation on sequence $\{N'_{k}\}_{k\ge 1}$. Moreover, the $0$-th coordinate projection $F_{0}:\Omega\rightarrow \R$ has zero conditional expectation with respect to the Pinsker factor of the corresponding Furstenberg system $(\Omega, \mathcal{F},\nu_{x},\sigma)$.
		
		By \cite[Proposition 2.5]{HN} and \cite[Theorem 4.13.(ii)]{Wa}, for $\mu$-a.e $x\in X$, the sequence $\{f_{1}(T^{an}x)\}_{n\in \Z}$ admits a correlation on sequence $\{N'_{k}\}_{k\ge 1}$ and the corresponding Furstenberg system $(\Omega, \mathcal{F},\lambda_{x}^{a},\sigma)$ is ergodic and has zero entropy. Let $G_{0}^{a}$ be its $0$-th coordinate projection. Likely, for $\mu$-a.e $x\in X$, the sequence $\{f_{2}(T^{bn}x)\}_{n\in \Z}$ admits a correlation on sequence $\{N'_{k}\}_{k\ge 1}$ and the corresponding Furstenberg system $(\Omega, \mathcal{F},\lambda_{x}^{b},\sigma)$ is ergodic and has zero entropy. Let $G_{0}^{b}$ be its $0$-th coordinate projection.
		
		So we can find a full measure subset $X_0$ of $X$ such that for any $x\in X_0$, the previous properties hold. By our assumption, there exists a subset $X_{1}$ of $X_0$ with $\mu(X_1)>0$ such that for any $x\in X_1$, there exists a subsequence $\{N'_{x,k}\}_{k\ge 1}$ of $\{N'_{k}\}_{k\ge 1}$ such that $$\lim_{k\rightarrow \infty}\frac{1}{N'_{x,k}}\sum_{n=1}^{N'_{x,k}}f_{1}(T^{an}x)f_{2}(T^{bn}x)\prod_{j=1}^{d}g_{j}(S_{j}^{p_{j}(n)}x)$$ exists and is non-zero.
		
		Let $x\in X_1$ be fixed at here. Let $w_{a},w_{b},z\in \Omega$ defined by $w_{a}(n)=f_{1}(T^{an}x),w_{b}(n)\\=f_{2}(T^{bn}x),z(n)=\prod_{j=1}^{d}g_{j}(S_{j}^{p_{j}(n)}x)$. Then there exists a subsequence $\{N''_{x,k}\}_{k\ge 1}$ of $\{N'_{x,k}\}_{k\ge 1}$ such that the sequence $$\Big\{\frac{1}{N''_{x,k}}\sum_{n=1}^{N''_{x,k}}\delta_{(\sigma^{n}w_{a},\sigma^{n}w_{b})}\Big\}_{k\ge 1}$$ has weak-$*$ limit $\lambda_x$ and the sequence  $$\Big\{\frac{1}{N''_{x,k}}\sum_{n=1}^{N''_{x,k}}\delta_{(\sigma^{n}w_{a},\sigma^{n}w_{b},\sigma^{n}z)}\Big\}_{k\ge 1}$$  has weak-$*$ limit $\rho_x$. Clealy, $\rho_x$ is a joining of $(\Omega, \mathcal{F},\nu_{x},\sigma)$ and $(\Omega\times \Omega,\mathcal{F}^{\otimes 2}, \lambda_{x},\sigma\times \sigma)$. By \cite[Fact 4.4.3]{T}, we know that $(\Omega\times \Omega, \mathcal{F}^{\otimes 2},\lambda_{x},\sigma\times \sigma)$ has zero entropy since $\lambda_x$ is a joining of $(\Omega,\mathcal{F}, \lambda_{x}^{a},\sigma)$ and $(\Omega, \mathcal{F},\lambda_{x}^{b},\sigma)$. By \cite[Proposition 2.1]{HN}, we have
		$$\lim_{k\rightarrow \infty}\frac{1}{N''_{x,k}}\sum_{n=1}^{N''_{x,k}}f_{1}(T^{an}x)f_{2}(T^{bn}x)\prod_{j=1}^{d}g_{j}(S_{j}^{p_{j}(n)}x)=\int F_{0}\otimes G_{0}^{a}\otimes G_{0}^{b}d\rho_x=0.$$ It is a contradiction. So $$\lim_{N\rightarrow \infty}\frac{1}{N}\sum_{n=1}^{N}f_{1}(T^{an}x)f_{2}(T^{bn}x)\prod_{j=1}^{d}g_{j}(S_{j}^{p_{j}(n)}x)=0$$ in $L^{2}(\mu)$.
		
			\textbf{Part II}: we verify the following: If for any $1\le j\le d$, $g_{j}$ is measurable with respect to $\mathcal{Z}_{\infty}(S_{j})$, then $$\lim_{N\rightarrow \infty}\frac{1}{N}\sum_{n=1}^{N}f_{1}(T^{an}x)f_{2}(T^{bn}x)\prod_{j=1}^{d}g_{j}(S_{j}^{p_{j}(n)}x)$$ exists almost everywhere.
			
			By \cite[Theorem 14.15,16.10]{HK-book}, we know that for any $1\le j\le d$, we can find a function sequence $\{h_{j,m}\}_{m\ge 1}$ such that for each $m\ge 1$, the followings hold:
			\begin{itemize}
				\item[(1)]$\norm {h_{j,m}-g_{j}}_{d+3}\le \frac{1}{4^{m+1}}$;
				\item[(2)]for $\mu$-a.e. $x\in X$, $\{h_{j,m}(S_{j}^{p_{j}(n)}x)\}_{n\in \Z}$ is a nilsequence.
			\end{itemize}
				Note that product of finite nilsequences is still a nilsequence. By Corollary \ref{cor1}, we know for any $m\ge 1$, $$\lim_{N\rightarrow \infty}\frac{1}{N}\sum_{n=1}^{N}f_{1}(T^{an}x)f_{2}(T^{bn}x)\prod_{j=1}^{d}h_{j,m}(S_{j}^{p_{j}(n)}x)$$ exists almost everywhere.
				By repeating the arguments of the proof of \cite[Corollary 2.2]{DL}, we know that $$\lim_{N\rightarrow \infty}\frac{1}{N}\sum_{n=1}^{N}f_{1}(T^{an}x)f_{2}(T^{bn}x)\prod_{j=1}^{d}g_{j}(S_{j}^{p_{j}(n)}x)$$ exists almost everywhere. This finishes the proof.
	\end{proof}

 \section*{Acknowledgement}
 The author is supported by NNSF of China (11971455, 12031019, 12090012). The author's thanks go to Professor Song Shao and Professor Xiangdong Ye for their useful suggestions.

 \bibliographystyle{plain}
 \bibliography{ref}
 
 \end{document}